\documentclass[a4paper,10pt,leqno]{amsart}
\usepackage[T1]{fontenc}
\usepackage[utf8]{inputenc}
\usepackage[margin=1.2in]{geometry}
\usepackage{lmodern}
\usepackage[english]{babel}
\usepackage{graphicx}
\usepackage{amsmath}
\usepackage{amsfonts}
\usepackage{amssymb} 
\usepackage{amsthm}
\usepackage{bbm}
\usepackage{bm} 
\usepackage{lipsum}
\usepackage{xcolor}
\usepackage{url}
\theoremstyle{plain}
\newtheorem{thm}{Theorem}
\newtheorem{lem}{Lemma}
\newtheorem{prop}{Proposition}

\newtheorem{defn}{Definition}
\newtheorem{exmp}{Example}

\newtheorem{rem}{Remark}

\def \be {\begin{equation}}
\def \ee {\end{equation}}

\begin{document}
\title[Finite state MFG and the Master Equation]{Convergence, Fluctuations and Large Deviations for Finite State Mean Field Games via the Master Equation}
\author{Alekos Cecchin}
\address[A. Cecchin and G. Pelino]
{\newline \indent Department of Mathematics ``Tullio Levi Civita''
\newline 
\indent University of Padua \newline
\indent Via Trieste 63, 35121 Padova, Italy
\newline }
\email[A. Cecchin]{alekos.cecchin@math.unipd.it}
\author{Guglielmo Pelino}
\email[G. Pelino]{guglielmo.pelino@math.unipd.it}
\thanks{The authors are supported by the PhD programme in Mathematical Sciences, Department of Mathematics, 
University of Padua (Italy), Progetto Dottorati - Fondazione Cassa di Risparmio di Padova e Rovigo and by the research project ``Nonlinear Partial Differential Equations: Asymptotic Problems and Mean-Field
Games'' of the Fondazione CaRiPaRo.
\\ We are very grateful to our PhD supervisors Paolo Dai Pra and Markus Fischer. We also thank Martino Bardi, Marco Cirant and Diogo Gomes for helpful discussions. 
}

\date{February 9, 2018}
\subjclass{60F05, 60F10, 60J27, 60K35, 91A10, 93E20} %
\keywords{Mean field games, finite state space, jump Markov processes, $N$-person games, Nash equilibrium, Master Equation, Propagation of chaos, Central Limit Theorem, Large Deviation Principle}

\begin{abstract}
We show the convergence of finite state symmetric $N$-player differential games, where players control their transition rates from state to state,  to a limiting dynamics given by a finite state Mean Field Game system made of two coupled forward-backward ODEs.  We exploit the so-called Master Equation, which in this finite-dimensional framework is a first order PDE in the simplex of probability measures, obtaining the convergence of the feedback Nash equilibria, the value functions and the optimal trajectories. The convergence argument requires only the regularity of a solution to the Master Equation. Moreover, we employ the convergence results to prove a Central Limit Theorem and a Large Deviation Principle for the evolution of the $N$-player empirical measures.
The well-posedness and regularity of solution to the Master Equation are also studied, under monotonicity assumptions.
\end{abstract}

\maketitle

\section{Introduction}

Mean Field Games were introduced independently by Lasry and Lions \cite{lasry} and by Huang et al.\cite{huang} as limit models for symmetric non-zero-sum non-cooperative $N$-player dynamic games  when the number $N$ of players tends to infinity.
For an introduction to the topic see for instance \cite{card1}, \cite{carm} or \cite{ben}, where the latter two deal also with mean-field type optimal control.
While a wide range of different classes of Mean Field Games has been considered up to now, here we focus on finite time horizon problems with continuous time dynamics under fully symmetric cost structure and complete information, where the position of each agent belongs to a finite state space.
In this setting, Mean Field Games were first analyzed in \cite{gomes_mohr} in discrete time, and then in \cite{gomes} in continuous time.  


The relation between the $N$-player game and its limit can be studied in two opposite directions: approximation and convergence. The approximation argument consists in proving that the solutions to the Mean Field Game allow to construct approximate Nash equilibria for the prelimit game, where the error in the approximation tends to zero as $N$ goes to infinity. Convergence goes in the opposite direction: are Nash equilibria for the $N$-player game converging to solutions of the Mean Field Game when the number of players tends to infinity?

Results in the first direction are much more common and easier to obtain: for the diffusive case without jumps see for instance \cite{huang}, \cite{carm}, \cite{carm1} and \cite{ben1}.  In the finite state space setting, this was achieved in  \cite{kolo} studying the infinitesimal generator, while in  \cite{alekos} an approximation result was found through a fully probabilistic approach, which allowed for less restrictive assumptions on the dynamics and the optimization costs.
On the other hand, results on convergence are fewer and even more recent: while the limits of $N$-player Nash equilibria in stochastic open-loop strategies can be completely characterized (see \cite{lacker} and \cite{fischer} for the diffusion case), the convergence problem for Nash equilibria in feedback form with full information is more difficult. A result in this direction is given by \cite{gomes} in our finite state setting, via the infinitesimal generator, but only if the time horizon is small. 

A breakthrough was achieved by Cardaliaguet et al. \cite{card} through the use of the so-called Master Equation, again in the diffusion case and also in the presence of a common noise. Their convergence argument relies on having a regular solution to the Master Equation, which in the diffusion case  - as originally remarked by Lions in \cite{lezioni} - is a kind of infinite dimensional transport equation on the space of probability measures. Its solution yields a solution of the Mean Field Game system for any initial time and initial distribution. Moreover, such system can be seen as the characteristics curves for the Master Equation; we will return to this shortly. 
If the Master Equation admits a unique regular solution, then this solution can be used to prove the convergence. The crucial ingredient in the proof consists in a coupling argument, in a similar fashion to the propagation of chaos property for uncontrolled systems (see \cite{sni}). Such coupling, in which independent copies of the limit process are compared to their prelimit counterparts, ultimately allows to get the desired convergence of the value functions of the $N$-player game to the solution to the Master Equation, as well as a form of convergence for the associated optimal feedback strategies and a propagation of chaos result for the corresponding optimal trajectories.

In this paper, we focus on the convergence of feedback Nash equilibria in the finite state space scenario.
We follow  the approach of \cite{card}, showing the convergence of the value functions of the $N$-player game to the solution to the Master Equation.  The argument provides also the convergence of the feedback Nash equilibria and a propagation of chaos property for the associated optimal trajectories. The coupling technique necessary for the proof is the main motivation for writing the dynamics of the $N$ players as stochastic differential equations driven by Poisson Random measures, as in \cite{alekos}. 




In order to motivate the present work, let us  introduce the equations in play at a formal level. The dynamics of the $N$-player game are given by the system of controlled SDEs:
\begin{equation}
\label{eqn:sde}
X_{i}(t)  = Z_i + \int_0^t \int_\Xi f( X_i(s^{-}), \xi, \alpha^i(s,\bm{X}_{s^-}))\mathcal{N}_i(ds,d\xi),
\end{equation}
for $i = 1,\dots,N$, where each $X_i(t)$ is a process taking values in the finite space $\Sigma = \left\{1,\dots,d\right\}$ and we denote
by  $\bm{X}_t := (X_1(t), \dots, X_N(t))$ the vector of the $N$ processes; $\mathcal{N}_i$ are $N$ i.i.d. Poisson measures on 
$[0,T] \times \Xi$, with $\Xi\subset \mathbb{R}^d$, and the controls $\alpha^i \in A\subset \mathbb{R}^d$ are only in feedback form.
The function $f$ is crucial for the definition of the dynamics \eqref{eqn:sde}: it models the possible jumps of the Markov chain, while the Poisson measures prescribe their random occurrences. 
Following an idea of \cite{Graham}, we define the function $f$ so that 
the control
$\alpha^i_y(t,x, \bm{x}_t^{N,i})$ represents the rate at which player $i$ decides to go from state $x$ to state $y$, when $x\neq y$, 
$\bm{x}_t^{N,i}$ being the states of the other $N-1$ players at time $t$; c.f. \eqref{f} and \eqref{tran} below.
Let us remark that, while Cardaliaguet et al. study the convergence problem also in the presence of a noise (Brownian motion) common to all the players, which makes things even more difficult, we do not consider here any common noise. In the discrete setting, this would result in considering dynamics with simultaneous jumps, which can be realized by adding another Poisson measure in \eqref{eqn:sde}, common to all the players.

In our framework, we show that there exists a unique feedback Nash equilibrium for the $N$-player game. It is provided by the Hamilton-Jacobi-Bellman system of $N d^N$ coupled ODE's
\begin{equation}
\tag{HJB}
\begin{cases}
-\frac{\partial {v}}{\partial t}^{N,i}(t,\bm{x}) - \sum_{j=1, \ j\neq i}^N \alpha^*( x_j, \Delta^j v^{N,j}) \cdot \Delta^j v^{N,i} + H(x_i, \Delta^i v^{N,i}) = F^{N,i}(\bm{x}),\\
v^{N,i}(T, \bm{x}) = G^{N,i}(\bm{x}).
\label{eqn: HJB}
\end{cases}
\end{equation}
In the above equation, $F^{N,i}$ and $G^{N,i}$ are respectively the running and terminal costs, 
$H$ is the Hamiltonian and $\alpha^*$ its unique maximizer, and 
\begin{equation*}
\Delta^j g(\bm{x}):=  \left( g(x_1, \dots, y, \dots, x_N) - g(x_1, \dots, x_j, \dots, x_N)\right)_{y = 1,\dots,d} \in \mathbb{R}^{d}
\end{equation*}
 denotes the finite difference of a function $g(\bm{x}) = g(x_1, \dots, x_N)$ with respect to its $j$-th entry.

The study of convergence consists in finding a limit for the (\ref{eqn: HJB}) system as $N$ tends to infinity.
To this end, we assume symmetric properties of the game. Namely, the costs $F^{N,i}$ and $G^{N,i}$ satisfy the mean field assumptions, i.e. there exist two functions $F$ and $G$ such that $F^{N,i}(\bm{x}) = F(x_i, m_{\bm{x}}^{N,i})$ and $G^{N,i}(\bm{x}) = G(x_i, m_{\bm{x}}^{N,i})$, where $m_{\bm{x}}^{N,i}$ denotes the empirical measure of all the players except for the $i$-th, which belongs to $P(\Sigma)$, the space of probability measures on $\Sigma$.
Thanks to these mean field assumptions, we shall say 
that the solution $v^{N,i}$ of the (\ref{eqn: HJB}) system can be found in the form $v^{N,i}(t, \bm{x}) = V^N(t, x_i, m_{\bm{x}}^{N,i})$, for a suitable function $V^N$ of time, space and measure; this makes the convergence problem more tractable.
At a formal level, we can introduce the limiting equation assuming the existence of a function $U$ such that 
$V^N(t,x_i,m_{\bm{x}}^{N,i}) \sim U(t,x_i,m_{\bm{x}}^{N,i})$ for large $N$.  Then, let us analyze the different components of the HJB system and which should be their corresponding limits in terms of $U$.
First, the $i$-th difference of $v^{N,i}$ should converge to
\begin{align*}
\Delta^i v^{N,i}(t, \bm{x}) & = \left(v^{N,i}\left(t,y,m_{\bm{x}}^{N,i}\right) - v^{N,i}\left(t,x_i,m_{\bm{x}}^{N,i}\right)\right)_{y=1,\dots,d} \\
& \to \left(U(t,y,m) - U(t,x_i, m)\right)_{y=1,\dots,d} = \Delta^x U(t,x_i,m).
\end{align*}
For $j\neq i$ we should instead get
\begin{align*}
\Delta^j v^{N,i}(t,\bm{x}) &= \left(v^{N,i}\left(t,x_i, \frac{1}{N-1} \sum_{k\neq j,i} \delta_{x_k} + \frac{1}{N-1} \delta_y\right) - v^{N,i}\left(t,x_i, \frac{1}{N-1} \sum_{k \neq i} \delta_{x_k}\right)\right)_{y = 1,\dots, d} \\ 
& \sim \frac{1}{N-1} D^m U(t,x_i, m_{\bm{x}}^{N,i}, x_j),
\end{align*}
modulo terms of order $O(1/N^2)$, where a precise definition of $D^m U$, the derivative with respect  to a probability measure, will be given in the next section.
Then,  $H(x_i, \Delta^i v^{N,i}) \to H(x_i, \Delta^x U)$, and we should obtain
\begin{align*}
\sum_{j=1, j \neq i}^N & \alpha^{*}(x_j, \Delta^j v^{N,j})  \cdot \Delta^j v^{N,i} \sim \frac{1}{N-1} \sum_{j=1, j \neq i}^N \alpha^*(x_j, \Delta^x U(t,x_j, m_{\bm{x}}^{N,i})) \cdot D_m U(t,x_i, m_{\bm{x}}^{N,i}, x_j) \\
& \sim \int_{\Sigma} \alpha^*(y, \Delta^y U(t,y,m_{\bm{x}}^{N,i})) \cdot D^m U(t,x_i, m_{\bm{x}}^{N,i}, y) d m_{\bm{x}}^{N,i}(y) \\
& \to \int_{\Sigma} D^m U (t,x,m,y)\cdot \alpha^*(y, \Delta^y U(t,y,m)) dm(y).
\end{align*}
Thus, we are able to introduce the Master Equation, that is the equation to which we would like to prove convergence 
\begin{equation}
\tag{M}
\begin{cases}
& -\frac{\partial U}{\partial t}  + H(x, \Delta^x U) - \int_{\Sigma} D^m U(t,x,m,y) \cdot \alpha^*(y, \Delta^y U(t,y,m)) dm(y) = F(x,m),  \\
& U(T,x,m) = G(x,m), \ \ \ (x,m) \in \Sigma \times P(\Sigma), \ \ \ t\in[0,T].
\end{cases}
\label{eqn:M}
\end{equation}

It is a first order PDE in $P(\Sigma)$, the simplex of probability measures in $\mathbb{R}^d$.
 We solve it using the strategy developed in \cite{card}, which relies on the method of characteristics.  Indeed, as remarked above, the classical Mean Field Game system can be seen as the characteristic curves of \eqref{eqn:M}. In our finite space setting, the Mean Field Game system consists of two coupled ODEs: a Hamilton-Jacobi-Bellman equation giving the value function of the limiting control problem and a Kolmogorov-Fokker-Planck describing the evolution of the limiting deterministic flow of probability measures. We solve the Mean Field Game system for any initial time and initial distribution: this defines a candidate solution to \eqref{eqn:M} and, in order to prove that it is 
differentiable with respect to the initial condition, we introduce and analyze  a linearized Mean Field Game system. 
To prove the well posedness of \eqref{eqn:M} for any time horizon, the sufficient hypotheses we make are the monotonicity assumptions of Lasry and Lions. However, we stress again that these assumptions play no role in the convergence argument, as it requires only the existence of a regular solution to \eqref{eqn:M}.

Let us mention that finite space Mean Field Games have been studied by several authors in the last years. 
An equation similar to \eqref{eqn:M}, but holding in the whole space $\mathbb{R}^d$, was studied in \cite{lezioni}, proving the well-posedness and regularity under stronger assumptions. The well posedness of the Mean Field Game system was discussed in  \cite{gomes_mohr} - in the discrete time framework -, in \cite{gomes} and in \cite{gueant}, the latter with applications on graphs. 
The works \cite{ferreira} and \cite{gomes} deal also with the problem of convergence, as $T$ tends to infinity, to the stationary  Mean Field Game. The Master Equation was discussed - but only at a formal level - in \cite{gomes_velho}, \cite{gomes_velho2} and again in 
\cite{gomes}, in the first two with a particular focus on the two state problem. A new class of Mean Field Games with major and minor agents was analyzed in \cite{carW}, showing the relation with the $N$-player game in the approximation direction. 

Here, we also study the empirical measure process of the $N$-player optimal trajectories. Indeed, the convergence obtained allows to get a Central Limit Theorem and a Large Deviation Principle  for the asymptotic behavior, as $N$ tends to infinity, of such process. 
These results are, to the best of our knowledge, new in Mean Field Game theory and allow to better understand the convergence of the empirical distribution of the players to the limit measure. Both Central Limit Theorems and Large Deviation Principles have been investigated for classical finite state mean field interacting particle systems, i.e. where the prelimit jump rates are the same for any individual. The fluctuations are derived e.g. in  \cite{comets}, via the martingale problem, and in \cite{daipra}, by  analyzing the infinitesimal generator. 
The large deviation properties are found e.g. in \cite{25}, \cite{32}, \cite{31}, and, more recently, in \cite{DRW}.

The key point for proving these results is to compare the prelimit optimal trajectories with the ones in which each player chooses the control induced by the Master Equation. The fluctuations are then found by analyzing the associated infinitesimal generator, while the Large Deviation properties are derived using a result in \cite{DRW}. It is worth saying that the limit processes involve the solution to the Master Equation. 
Let us mention that such properties are being studied in the diffusion case, independently, via the Master Equation approach, by Lacker et al. \cite{slides}.

Finally, we mention a very recent preprint \cite{eb}, which appeared some days after submission of this paper. 
In this work, independently, the authors again use the Master Equation approach to find the same convergence result we prove here. They still rely on the idea developed in \cite{card}, but  consider a probabilistic representation of the dynamics different from ours. Moreover, they also obtain a Central Limit Theorem for the fluctuations of the empirical measure processes. However, they prove it in a different way, that is, via a martingale Central Limit Theorem.

\subsection*{Structure of the paper}

The rest of the paper is organized as follows. 
In Section \ref{model}, we start with the notation and the definition of derivatives in the simplex. So we present
the two sets of assumptions we make use of: one for the convergence, the fluctuations and the large deviation results, while the other, stronger, for the well posedness of the Master Equation; we also show an example in which the assumptions are satisfied. Then we give a
detailed description of both the $N$-player game and the limit model. 
Section \ref{convergence} contains the convergence results and their proofs, while in Section \ref{fl} we employ the convergence argument to derive the asymptotic behaviour of the empirical measure process, that is, the Large Deviation Principle and the Central Limit Theorem. 
 Section \ref{mastersection} analyzes the well-posedness and regularity of the solution to the Master Equation. We conclude with Section 
\ref{concl} by summarizing all the main results.

\section{Model and assumptions}
\label{model}

\subsection{Notation}

Here we briefly clarify the notations used throughout the paper. First of all, we are considering $\Sigma = \left\{1,\dots, d\right\}$ to be the finite state space of any player.  Let $T$ be the finite time horizon and $A :=[\kappa,M]^d$, for $\kappa, M > 0$, be the compact space of control values.
Denote by
\begin{equation*}
P(\Sigma) := \left\{m \in \mathbb{R}^d \ : \ m_j \geq 0, \ \ m_1 + \dots + m_d = 1\right\}
\end{equation*}
the space of probability measures on $\Sigma$. 
Besides the euclidean distance in $\mathbb{R}^d$, denoted with $|\cdot|$, we may interchangeably use the Wasserstein metric $\bm{d}_1$ on $P(\Sigma)$  since all metrics are equivalent. We observe that the simplex $P(\Sigma)$ is a compact and convex subset of $\mathbb{R}^d$.

Let $\Xi := [0,M]^d$. In the dynamics given by \eqref{eqn:sde},  the function 
$f : \Sigma \times \Xi \times A  \to \left\{-d,\dots, d\right\}$ modeling the jumps has to be a measurable function such that $f(x,\xi,a) \in \left\{1-x, \dots, d-x\right\}$. Specifically, throughout the paper we set, for $x \in \Sigma, \ \xi = (\xi_y)_{y \in \Sigma}$ and $a = (a_y)_{y \in \Sigma}$, 
\begin{equation}
\label{f}
f(x,\xi,a) := \sum_{y \in \Sigma} (y-x) \mathbbm{1}_{]0, a_y[} (\xi_y).
\end{equation}
The measures $\mathcal{N}_i$ appearing in \eqref{eqn:sde} are $N$ i.i.d. stationary Poisson random measures on $[0,T] \times \Xi$, with intensity measure $\nu$ on $\Xi$ given by 
\begin{equation}
\label{nu}
\nu(E) := \sum_{j=1}^d \ell(E \cap \Xi_j),
\end{equation}
for any $E$ in the Borel $\sigma$-algebra $\mathcal{B}(\Xi)$ of $\Xi$, where $\Xi_j := \left\{ u \in \Xi \ : \ u_i = 0 \ \ \forall \ i \neq j\right\}$ is viewed as a subset of $\mathbb{R}$, and $\ell$ is the Lebesgue measure on $\mathbb{R}$.  
We fix a probability space $(\Omega, \mathcal{F}, \mathbb{P})$ and denote by 
$\mathbb{F}=(\mathcal{F}_t)_{t\in [0,T]} $ the filtration generated by the Poisson measures. These definitions of $f$ and $\nu$ ensure that the control is exactly the transition rate of the Markov chain; see \eqref{tran} below. 

The initial datum of the $N$-player game is represented by $N$ i.i.d. random variables $Z_1, \dots, Z_N$ with values in $\Sigma$
 and distributed as $m_0 \in P(\Sigma)$. The vector $\bm{Z}= (Z_1,\ldots, Z_N)$ is in particular \emph{exchangeable}, in the sense that the joint distribution is invariant under permutations, and is assumed to be $\mathcal{F}_0$-measurable, i.e. independent of the noise.

The state of player $i$ at time $t$ is denoted by $X_{i}(t)$, with $\bm{X}_t :=(X_1(t),\dots,X_N(t))$. The trajectories of each $X_i$ are  in $D([0,T], \Sigma)$,  the space of càdlàg functions from $[0,T]$ to $\Sigma$ endowed with the Skorokhod metric.
For $\bm{x}=(x_1,\dots,x_N)\in\Sigma^N$, denote the empirical measures
$$m_{\bm{x}}^N := \frac{1}{N}\sum_{j=1}^N \delta_{x_j} \qquad m^{N,i}_{\bm{x}} :=  \frac{1}{N-1}\sum_{j=1, j \neq i}^N \delta_{x_j}.$$
Thus, $m^N_{\bm{X}}(t):= m^N_{\bm{X}_t}$ is the empirical measure of the  $N$ players and $m^{N,i}_{\bm{X}}(t):= m^{N,i}_{\bm{X}_t} $ is the empirical measure of all the players except the $i$-th. Clearly, they are $P(\Sigma)$-valued stochastic processes. In the limiting dynamics, the empirical measure is replaced by a deterministic flow of probability measures $m :[0,T] \to P(\Sigma)$.

In choosing his/her strategy, each player minimizes the sum of three costs: a Lagrangian $L:\Sigma \times A \longrightarrow \mathbb{R}$, a running cost $F: \Sigma\times P(\Sigma)\longrightarrow \mathbb{R}$ and a final cost $G: \Sigma\times P(\Sigma)\longrightarrow \mathbb{R}$ (see next section for the precise definition of the $N$-player game).
The Hamiltonian $H$ is defined as the Legendre transform of $L$:
\begin{equation}
\label{h}
H(x, p) := \sup_{\alpha \in A} \left\{- \alpha \cdot p - L(x,\alpha)\right\},
\end{equation}
for $x \in \Sigma$ and $p \in \mathbb{R}^d$.


Given a function $ g: \Sigma \to \mathbb{R}$ we denote its first finite difference $\Delta g (x) \in \mathbb{R}^{d}$  by
\begin{equation*}
\Delta  g(x):= \begin{pmatrix}
g(1) - g(x) \\
\cdot \\
\cdot \\
\cdot \\
g(d) - g(x)
\end{pmatrix}.
\end{equation*}
When we have a function $g : \Sigma^N \to \mathbb{R}$, we denote with $\Delta^j g(\bm{x}) \in \mathbb{R}^{d}$ the first finite difference with respect to the $j$-th coordinate, namely
\begin{equation*}
\Delta^j  g(\bm{x}):= \begin{pmatrix}
g(x_1,\dots, x_{j-1},1, x_{j+1}, \dots, x_N) - g(\bm{x}) \\
\cdot \\
\cdot \\
\cdot \\
g(x_1,\dots, x_{j-1},d, x_{j+1}, \dots, x_N) - g(\bm{x})
\end{pmatrix}.
\end{equation*}
 For future use, let us observe that, for $g : \Sigma \to \mathbb{R}$, 
\be
|\Delta g(x)| \leq \max_y [\Delta g(x)]_y \leq 2 \max_x |g(x)| \leq C|g|.
\label{bounddelta}
\ee
For a function $u:[t_0,T]\times\Sigma \longrightarrow \mathbb{R}$, we denote 
\be
||u|| := \sup_{t \in [t_0,T]} \max_{x \in \Sigma} |u(t,x)|.
\label{norm}
\ee 
We also use the notation $u(t) :=  (u_1(t), \dots, u_d(t)) = (u(t,1), \dots, u(t,d))$. When considering a function $u$ with values in 
$\mathbb{R}^d$, its norm is defined as in \eqref{norm}, but where $|\cdot|$ denotes the euclidean norm in $\mathbb{R}^d$.

We now introduce the concept of variation with respect to a probability measure $m$ of a function $U:P(\Sigma) \to \mathbb{R}$. 
Let us remark that the usual notion of gradient cannot be defined for such a function: since the domain is  $P(\Sigma)$ we are not allowed to define e.g. the directional derivative $\frac{\partial}{\partial m_1}$, as we would have to extend the definition of $U$ outside the simplex.  
\begin{defn}
We say that a function $U: P(\Sigma) \to \mathbb{R}$ is \textit{differentiable} if there exists a function $D^m U : P(\Sigma) \times \Sigma \to \mathbb{R}^{d}$ given by
\begin{equation}
[D^m U(m,y)]_z := \lim_{s \to 0^+} \frac{U(m + s(\delta_z - \delta_y)) - U(m)}{s}.
\end{equation}
for $z = 1, \dots, d$. Moreover, we say that $U$ is $C^1$ if the function $D^m U$ is continuous in $m$.
\end{defn}

Morally, we can think of $[D^m U(m,y)]_z$ as the (right) directional derivative of $U$ with respect to $m$ 
along the direction $\delta_z - \delta_y$.
We also observe that $m + s(\delta_z - \delta_y)$ might be outside the probability simplex (e.g. when we are at the boundary), in which case we consider the limit only across admissible directions. However, note that, for our purposes, this is not really a problem: since in the limit $m(t)$ will be the distribution of the reference player, the bound from below for the control ensures that the boundary of the simplex will never be touched. 

Together with the definition, we state an identity which will come useful in the following sections:
\begin{equation}
\label{eqn:identity}
[D^m U(m,y)]_z = [D^m U(m,x)]_z + [D^m U(m,y)]_x,
\end{equation} 
for any $x,y, z\in \Sigma$.
Its derivation is an immediate consequence of the linearity of the directional derivative.

We can easily extend the above definition to the case of derivative with respect to a direction $\mu \in P_0(\Sigma)$, with 
$$P_0(\Sigma) := \left\{\mu \in \mathbb{R}^d \ : \ \mu_1 + \dots + \mu_d = 0\right\}.$$
Indeed, an element $\mu= (\mu_1, \dots, \mu_d) = \sum_{z\in\Sigma} \mu_z\in P_0(\Sigma)$ can be rewritten as a linear combination of $\delta_z - \delta_y$ as follows
\begin{equation*}
\mu = \sum_{z\neq y} \mu_z (\delta_z - \delta_y),
\end{equation*}
for  each $y\in \Sigma$, since $\sum_{z\neq y} \mu_z (\delta_z - \delta_y) =
 \sum_{z \neq y} \mu_z \delta_z - \left(\sum_{z \neq y} \mu_z\right) \delta_y$, and $\sum_{z \neq y} \mu_z = - \mu_y$.

This remark allows us to define the derivative of $U(m)$ along the direction $\mu \in P_0(\Sigma)$ as a map $\frac{\partial}{\partial \mu} U : P(\Sigma) \times \Sigma \to \mathbb{R}$, defined for each $y\in\Sigma$ by 
\be
\frac{\partial}{\partial \mu} U(m, y) := \sum_{z \neq y}^d \mu_z \left[D^m U(m,y)\right]_z = \mu \cdot D^m U(m,y),
\ee
 where the last equality comes from the fact that $[D^m U(m,y)]_y = 0$.

We also note that the definition of $\frac{\partial}{\partial \mu} U(m, y)$ does not actually depend on $y$, i.e.
\begin{equation}
\label{eqn:identity2}
\frac{\partial}{\partial \mu} U(m, y) = \frac{\partial}{\partial \mu} U(m, 1)
\end{equation}
for every $y \in \Sigma$ and for this reason we will fix $y=1$ when needed in the equations. Indeed,  by means of identity \eqref{eqn:identity}   and the fact that $\mu \in P_0(\Sigma)$, for each $y \in \Sigma$
\begin{align*}
\frac{\partial}{\partial \mu} U(m, 1) & = \sum_{z = 1}^d \mu_z \left[D^m U(m,1)\right]_z
  = [\text{identity \eqref{eqn:identity}}] = \sum_{z=1}^d\left([D^m U(m,y)]_z + [D^m U(m,1)]_y\right) \mu_z \\
& = \sum_{z=1}^d [D^m U(m,y)]_z \mu_z + [D^m U(m,1)]_y \sum_{z=1}^d \mu_z
 = \sum_{z=1}^d [D^m U(m,y)]_z \mu_z = \frac{\partial}{\partial \mu} U(m, y).
\end{align*}

For a function $U : \Sigma \times P(\Sigma) \to \mathbb{R}$ we denote the variation with respect to the first coordinate in a point $(x,m) \in \Sigma \times P(\Sigma)$ by $\Delta^x U(x,m)$. Also, denote by $\Gamma^\dagger$ the transpose of a matrix $\Gamma$.

\subsection{Assumptions}

We now summarize the assumptions we make, which can vary according to the different results. 

Because of the compactness of $A$, the continuity of $L$ with respect to its second argument is sufficient for guaranteeing the existence and finiteness of the supremum in \eqref{h} for each $(x,p)$. Moreover, we assume that there exists a unique maximizer $\alpha^*(x,p)$ in the definition of $H$ for every $(x,p)$:
\be
 \alpha^*(x, p) := \arg \min_{\alpha \in A}\left\{L(x,\alpha) + \alpha \cdot p\right\} = \arg \max_{\alpha \in A}\left\{-L(x,\alpha) -\alpha \cdot p\right\}.
\label{stella}
\ee
With our choices for $f$ in \eqref{f} and the intensity measure $\nu$ in \eqref{nu}, a sufficient condition for the above assertion is given by the strict convexity of $L$ in $\alpha$ (see Lemma 3 in \cite{alekos}).
If $L$ is uniformly convex, such optimum $\alpha^*$ is globally Lipschitz in $p$, and whenever $H$ is differentiable it can be explicitly expressed as $\alpha^*(x, p) = -D_p H(x,p)$; see Proposition 1 in \cite{gomes} for the proof.

We will work with two sets of assumptions on $H$. 
We first observe that it is enough to give hypotheses for $H(x,\cdot)$ on a sufficiently big compact subset of $\mathbb{R}^d$, i.e. for $|p|\leq K$, because of the uniform boundedness of $\Delta^i v^{N,i}$: see next section for details (Remark \ref{boundv}). In what follows, the constant $K$ is fixed:
\begin{itemize}
\item[\textbf{(H1)}]
If $|p|\leq K$ then $H$ and $\alpha^*$ are Lipschitz continuous in $p$.
\end{itemize}
We stress the fact that the above assumptions, together with the existence of a regular solution to \eqref{eqn:M}, are alone sufficient for proving the convergence of the $N$-player game to the limiting mean-field game dynamics.

In order to establish the well-posedness and the needed regularity for the Master Equation we make use of the following additional assumptions:
\begin{itemize}
\item[\textbf{(RegH)}] If $|p|\leq K$, $H$ is $C^2$ with respect to $p$; $H$, $D_p H$ and $D^2_{pp}H$ are Lipschitz in $p$ and the second derivative is bounded away from $0$, i.e.  there exists a constant $C$ such that
\begin{equation}
D_{pp}^2 H(x,p) \geq C^{-1};
\label{bound}
\end{equation}

 \item[\textbf{(Mon)}] The cost functions $F$ and $G$ are monotone in $m$ in the Lasry-Lions sense, i.e., for every $m, m' \in P(\Sigma)$, 
 \begin{equation}
 \label{mon}
 \sum_{x \in \Sigma} (F(x,m) - F(x,m'))(m(x) - m'(x)) \geq 0, 
 \end{equation}
and the same holds for $G$;
\item[\textbf{(RegFG)}] The cost functions $F$ and $G$ are $C^1$ with respect to $m$, with $D^m F$ and $D^m G$ bounded and Lipschitz continuous. In this case \eqref{mon} is equivalent to say that
 \begin{equation}
 \sum_x \mu_x [D^m F(x,m, 1) \cdot \mu] \geq 0
\end{equation}
for any $m\in P(\Sigma)$ and $\mu\in P_0(\Sigma)$.
\end{itemize}
Observe that the assumptions on $H$ allow for quadratic Hamiltonian. As we will see, the above assumptions imply both the boundedness and Lipschitz continuity of $\Delta^x U$ and $D^m U$ with respect to $m$. We conclude the section with an example for which all the assumptions are satisfied.

\begin{exmp}
\normalfont
The easiest example for the costs $F$ and $G$ is $F(x,m) = G(x,m) = m(x)$. Slightly more in general, one can consider 
$F(x,m) = \nabla \phi(m)(x)$, $\phi$ being a real convex function on $\mathbb{R}^d$.

For the choice of the Lagrangian $L$, a bit of work is needed in order to recover the regularity for $H$, since the maximization in the definition \eqref{h} of $H$ is performed only on the compact subset $A = [\kappa,M]^d$ of $\mathbb{R}^d$.

Consider the Lagrangian, not depending on $x$, defined by 
\begin{equation}
L(\alpha):= b|\alpha - a|^2,
\end{equation}
with $a := \left(\frac{\kappa + M}{2}\right)(1,\dots,1)^\dagger$ and $b$ a large enough constant to be chosen later.
The computation of $H := \sup_{\alpha \in [\kappa, M]} \left\{-p \cdot \alpha - L(\alpha)\right\}$ for such choice of $L$ gives
\begin{equation}
\label{eqn:h}
H(p) = \frac{{p}^2}{4b} - a \cdot p,
\end{equation}
for $|p| \leq b(M-\kappa)$, while  $H$ is linear outside this interval. It is trivial to verify that  $H$ is in $C^1(\mathbb{R}^d)$,  and thus \textbf{(H1)} is satisfied, while $H$ is not in $C^2(\mathbb{R}^d)$  because of the linear components. 
Nevertheless, (\ref{bound}) is satisfied whenever $|p| \leq K$, with the choice $b := \frac{K}{M-\kappa}$. Moreover, the Lipschitz continuity of $D_p H$ and $D_{pp}^2 H$ is trivially holding because of expression \eqref{eqn:h} for $|p| \leq K$ and the linearity outside, and \textbf{(RegH)} follows.
Note that $p$ represents the gradient of the value functions and thus it belongs to a compact $[-K,K]$, where $K$ is independent of $b$; c.f. Remark 1 below.

\end{exmp}

\subsection{N-Player Game} 

In this section we describe the $N$-player game in a general setting. Namely, we suppose that each individual has complete information on the states of all the other players and we do not require the players to be symmetric. Then, we show the relation between system \eqref{eqn: HJB} and the concept of Nash equilibria for the game through a classical Verification Theorem. We conclude the section by introducing the mean field assumptions and stating a consequence on the symmetry of the solution to \eqref{eqn: HJB}.
We remark that most of the results of this section were found also in \cite{gomes}, but in a slightly different framework. Namely, there the authors assumed a priori that the value functions depend on the empirical measure, assuming hence symmetry. Moreover, they studied the infinitesimal generator of the processes, while here we employ our probabilistic representation. 

In the prelimit the dynamics are given by the system of $N$ controlled SDEs 
\be
X_{i}(t)  = Z_i + \int_0^t \int_\Xi f(X_i(s^{-}), \xi, \alpha^i(s,\bm{X}_{s^-}))\mathcal{N}_i(ds,d\xi),
\label{Nsde}
\ee
for $i = 1,\dots, N$, where $f$ is given by \eqref{f} and $\bm{X}_t= (X_1(t),\dots,X_N(t))$.
Each player is allowed to choose his/her control $\alpha^i$ having complete information on the state of the other players. 
We consider only controls
$\boldsymbol{\alpha}^N := (\alpha^1, \dots, \alpha^N)$ in feedback form, i.e. the controls are deterministic functions 
of time and space $\alpha^i :[0,T] \times \Sigma^N \longrightarrow A$, $\alpha^i = \alpha^i(t, \bm{x})$. We say that 
$\alpha^i\in\mathcal{A}$, for each $i$, if it is a measurable function of time. We denote by $\mathcal{A}^N$ the set of feedback strategy vectors $\boldsymbol{\alpha}^N = (\alpha^1, \dots, \alpha^N)$, each $\alpha^i$ belonging to $\mathcal{A}$. 


We remark that the dynamics  \eqref{Nsde} is always well-posed, for any admissible choice of the control, since the state space is finite and the coefficients are then trivially Lipschitz continuous. Namely, for any $\boldsymbol{\alpha}^N \in \mathcal{A}^N$ there exists a unique strong solution to \eqref{Nsde}, in the sense that $(\bm{X}_t)_{t\in [0,T]}$ is adapted to the filtration $\mathbb{F}$ generated by the Poisson random measures. 


With the definition of $f$ in \eqref{f} and the intensity measure $\nu$ in \eqref{nu}, the dynamics of any player remains in $\Sigma$ for any time and the feedback controls are exactly the transition rates of the continuous time Markov chains $X_i(t)$. Indeed, one can prove - see \cite{alekos} - that, for $x \neq y$ and $\bm{x}^{N,i} \in \Sigma^{N-1}$,
\be
\mathbb{P}\left[X_i(t+h) = y \big| X_i(t) = x, \ \bm{X}_t^{N,i} = \bm{x}^{N,i}\right] = \alpha^i_y(t, x, \bm{x}^{N,i}) h + o(h).
\label{tran}
\ee
Since $\alpha$ is the vector of the transition rates of the Markov chain,  we set $\alpha^i_x(x) = -\sum_{y \neq x} \alpha^i_y(x)$.
We remark that the boundedness from below of the controls ($\alpha^i \in [\kappa,M]^d$, $\kappa >0$) guarantees 
 that $P(X_i(t) = x) > 0$ for every $x$ in $\Sigma$ and $t>0$, for any player $i$.

Next, we define the object of the minimization.
Let $\boldsymbol{\alpha}^N = (\alpha^1, \dots, \alpha^N) \in \mathcal{A}^N$ be a strategy vector and $\bm{X} = (X_1, \dots, X_N)$ the corresponding solution to \eqref{Nsde}. For $i = 1,\dots, N$ and given functions $F^{N,i}, G^{N,i}: \Sigma^N \longrightarrow \mathbb{R}$, we associate to the $i$-th player the cost functional
\begin{equation}
J_i^N(\boldsymbol{\alpha}^N) := \mathbb{E}\left[\int_{0}^T \left[L(X_i(t), \alpha^i(t,\bm{X}_t )) + F^{N,i}(\bm{X}_t)\right]dt + G^{N,i}(\bm{X}_T)\right].
\end{equation}
The optimality condition for the $N$-player game is given by the usual concept of Nash equilibria. 
For a strategy vector $\boldsymbol{\alpha}^N = (\alpha^1, \dots, \alpha^N)\in\mathcal{A}^N$ and   $\beta\in\mathcal{A}$, denote by $[\boldsymbol{\alpha}^{N,-i}; \beta]$ the perturbed strategy vector given by
\begin{equation*}
[\boldsymbol{\alpha}^{N,-i}; \beta]^j := 
\begin{cases}
\alpha^j, \ \ j \neq i\\
\beta, \ \ j = i.
\end{cases}
\end{equation*}
Then, we can introduce the following
\begin{defn}
A strategy vector $\boldsymbol{\alpha}^N$ is said to be a Nash equilibrium for the $N$-player game if for each $i=1, \dots, N$
\begin{equation*}
J_i^N(\boldsymbol{\alpha}^N) = \inf_{\beta\in\mathcal{A}}J_i^N([\boldsymbol{\alpha}^{N,-i}; \beta]).
\end{equation*}
\end{defn}

Let us now introduce the functional
\be
J_i^N (t,\bm{x},\boldsymbol{\alpha}^N) := \mathbb{E}\left[\int_{t}^T [L(X^{t,\bm{x}}_i(s), \alpha^i(s,\bm{X}^{t,\bm{x}}_s )) + 
F^{N,i}(\bm{X}^{t,\bm{x}}_s)]ds + G^{N,i}(\bm{X}^{t,\bm{x}}_T)\right],
\ee
where
$$
X^{t,\bm{x}}_{i}(s)  = x_i + \int_t^s \int_\Xi f(X^{t,\bm{x}}_i(r^{-}), \xi, \alpha^i(r,\bm{X}^{t,\bm{x}}_{r^-}))\mathcal{N}_i(dr,d\xi)
\qquad s \in [t,T].
$$
We work under hypotheses that guarantee the existence of a unique maximizer $\alpha^*(x, p)$ defined in \eqref{stella}.
With this notation, the Hamilton-Jacobi-Bellman system associated to the above differential game is given by the system \eqref{eqn: HJB} presented in the Introduction:
\begin{equation*}
\begin{cases}
-\frac{\partial {v}}{\partial t}^{N,i}(t,\bm{x}) - \sum_{j=1, \ j\neq i}^N \alpha^*(x_j, \Delta^j v^{N,j}) \cdot \Delta^j v^{N,i} + H(x_i, \Delta^i v^{N,i}) = F^{N,i}(\bm{x}),\\
v^{N,i}(T, \bm{x}) = G^{N,i}(\bm{x}).
\end{cases}
\end{equation*}
This is a system of $N d^N$ coupled ODE's, whose well-posedness  for all $T > 0$ can be proved through standard ODEs techniques, because of the Lipschitz continuity of the vector fields involved in the equations.

We are now able to relate system \eqref{eqn: HJB} to the Nash equilibria for the $N$-player game through the following 
\begin{prop}[Verification Theorem]
Let $v^{N,i}$, $i = 1,\dots, N$ be a classical solution to system \eqref{eqn: HJB}. Then the feedback strategy vector 
$\boldsymbol{\alpha}^{N*} = (\alpha^{1,*}, \dots, \alpha^{N,*})$ defined by
\be
\alpha^{i,*}(t,\bm{x}) := \alpha^{*}(x_i, \Delta^i v^{N,i}(t, \bm{x})) \qquad i=1,\ldots,N,
\ee
is the unique Nash equilibrium for the $N$-player game and the $v^{N,i}$'s are the \emph{value functions} of the game, i.e.
\be
v^{N,i}(t, \bm{x}) = J_i^N (t,\bm{x},\boldsymbol{\alpha}^{N*}) =  
\inf_{\beta\in\mathcal{A}}J_i^N(t,\bm{x},[\boldsymbol{\alpha}^{N*,-i}; \beta]).
\ee
\end{prop}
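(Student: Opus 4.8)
The plan is a classical verification argument based on Dynkin's formula for the jump dynamics \eqref{Nsde}, exploiting that, by \eqref{tran}, the feedback controls are precisely the transition rates of the $N$ chains. Fix $i\in\{1,\dots,N\}$, an initial condition $(t,\bm{x})$, and an arbitrary $\beta\in\mathcal{A}$; let $\bm{X}=\bm{X}^{t,\bm{x}}$ solve \eqref{Nsde} on $[t,T]$ when player $i$ uses $\beta$ and each player $j\neq i$ uses $\alpha^{j,*}$. Since $v^{N,i}$ is $C^1$ in time and $\Sigma^N$ is finite, Itô's formula for processes driven by Poisson random measures gives, for $s\in[t,T]$,
\[
v^{N,i}(s,\bm{X}_s) = v^{N,i}(t,\bm{x}) + \int_t^s \Big[ \partial_r v^{N,i} + \sum_{j\neq i}\alpha^{j,*}\cdot\Delta^j v^{N,i} + \beta(r,\bm{X}_r)\cdot\Delta^i v^{N,i} \Big](r,\bm{X}_r)\,dr + M_s ,
\]
where $M$ is a stochastic integral against the compensated Poisson measures, hence a martingale with $M_t=0$ and $\mathbb{E}[M_s]=0$ for all $s$ (the integrand is bounded, as $\Sigma^N$ is finite and the rates lie in the compact set $A$).

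The next step is to insert the equation \eqref{eqn: HJB}, which replaces $\partial_r v^{N,i}+\sum_{j\neq i}\alpha^{j,*}\cdot\Delta^j v^{N,i}$, evaluated at $(r,\bm{X}_r)$, by $H(X_i(r),\Delta^i v^{N,i})-F^{N,i}(\bm{X}_r)$. Then, by the very definition of the Legendre transform in \eqref{h} and of the maximizer in \eqref{stella},
\[
H(x,p) + \beta\cdot p \;\geq\; -L(x,\beta) \qquad \text{for all } \beta\in A,\ \text{ with equality iff } \beta=\alpha^*(x,p),
\]
so that the drift integrand above is bounded below by $-L(X_i(r),\beta(r,\bm{X}_r))-F^{N,i}(\bm{X}_r)$, with equality whenever $\beta(r,\bm{X}_r)=\alpha^*(X_i(r),\Delta^i v^{N,i}(r,\bm{X}_r))=\alpha^{i,*}(r,\bm{X}_r)$. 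Taking $s=T$, using $v^{N,i}(T,\cdot)=G^{N,i}$ and taking expectations yields $v^{N,i}(t,\bm{x})\leq J_i^N(t,\bm{x},[\boldsymbol{\alpha}^{N*,-i};\beta])$ for every $\beta$, with equality for $\beta=\alpha^{i,*}$. As $i$, $(t,\bm{x})$ and $\beta$ were arbitrary, this shows simultaneously that $\boldsymbol{\alpha}^{N*}$ is a Nash equilibrium and that the $v^{N,i}$ are the corresponding value functions.

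For uniqueness, suppose $\hat{\boldsymbol{\alpha}}^N\in\mathcal{A}^N$ is another Nash equilibrium. For each $i$, $\hat\alpha^i$ is then optimal for the single-player control problem with the other players frozen at $\hat\alpha^j$; one solves the associated backward equation $-\partial_t\hat v^{N,i}-\sum_{j\neq i}\hat\alpha^j\cdot\Delta^j\hat v^{N,i}+H(x_i,\Delta^i\hat v^{N,i})=F^{N,i}$, $\hat v^{N,i}(T,\cdot)=G^{N,i}$ — well posed as a system of ODEs on the finite set $\Sigma^N$ with bounded, measurable-in-time coefficients — and the same Dynkin computation identifies $\hat v^{N,i}$ with $J_i^N(\cdot,\cdot,\hat{\boldsymbol{\alpha}}^N)$. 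Optimality of $\hat\alpha^i$ forces the Legendre inequality above to hold with equality $dr$-a.e. along the trajectory; since the strictness of the maximizer in \eqref{stella} makes $\alpha^*$ the \emph{unique} such point, and since $\kappa>0$ ensures every $\bm{x}\in\Sigma^N$ is visited with positive probability, one gets $\hat\alpha^i(t,\bm{x})=\alpha^*(x_i,\Delta^i\hat v^{N,i}(t,\bm{x}))$ for a.e. $t$ and all $\bm{x}$. Plugging these identities back in, $(\hat v^{N,i})_i$ solves the system \eqref{eqn: HJB}, which by the Lipschitz continuity of its vector fields has a unique solution; hence $\hat v^{N,i}=v^{N,i}$ and $\hat\alpha^i=\alpha^{i,*}$ a.e.

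I expect the only genuinely delicate point to be this last step: passing from an arbitrary measurable Nash best response to the pointwise identity $\hat\alpha^i=\alpha^*(x_i,\Delta^i\hat v^{N,i})$, which rests on the uniqueness of the maximizer in \eqref{stella} together with the positivity $\mathbb{P}[X_i(t)=x]>0$ guaranteed by $\kappa>0$. Everything else — Dynkin's formula, the vanishing of the martingale term, and the Legendre inequality — is routine thanks to the finiteness of $\Sigma$ and the compactness of $A$.
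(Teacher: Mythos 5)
Your verification half is essentially the paper's own argument: the same It\^o/Dynkin expansion of $v^{N,i}(s,\bm{X}_s)$ along the dynamics driven by $[\boldsymbol{\alpha}^{N*,-i};\beta]$, the same substitution of \eqref{eqn: HJB} for the drift, and the same Legendre inequality $H(x,p)+\beta\cdot p\geq -L(x,\beta)$ with equality exactly at $\alpha^*(x,p)$, yielding $v^{N,i}(t,\bm{x})\leq J_i^N(t,\bm{x},[\boldsymbol{\alpha}^{N*,-i};\beta])$ with equality for $\beta=\alpha^{i,*}$. Where you genuinely go beyond the paper is the uniqueness of the Nash equilibrium: the paper asserts it in the statement but its written proof stops at the verification step, whereas you supply the converse argument (solve the best-response HJB for an arbitrary equilibrium $\hat{\boldsymbol{\alpha}}^N$, use that the nonnegative Legendre slack has zero expected time integral, invoke uniqueness of the maximizer in \eqref{stella} plus the positivity $\mathbb{P}[\bm{X}_t=\bm{x}]>0$ guaranteed by $\kappa>0$ to recover $\hat\alpha^i=\alpha^*(x_i,\Delta^i\hat v^{N,i})$ a.e., and then conclude by uniqueness for \eqref{eqn: HJB}). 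That argument is sound; the only detail worth making explicit is that the paper's Nash condition is imposed at time $0$ with initial law $m_0$, which need not have full support, so the positive-probability claim should be used for $t>0$ (where it holds for every state thanks to the lower bound $\kappa$ on the rates), giving the identity for a.e.\ $t$ and all $\bm{x}$ — exactly the form you need to plug back into the ODE system.
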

\begin{proof}
Let $\beta \in \mathcal{A}$ be any feedback and $\bm{X}^{t,\bm{x}}$ the corresponding solution to \eqref{Nsde}, given the strategy vector
$[\boldsymbol{\alpha}^{N*-i}; \beta]$; denote for simplicity $\bm{X}=\bm{X}^{t,\bm{x}}$.
 Fixing $i \in \left\{1,\dots,N\right\}$, because of the uniqueness of the maximizer in  \eqref{stella}, we have 
$$\frac{\partial {v}}{\partial t}^{N,i} + \sum_{j\neq i} \sum_{y = 1}^{d} \alpha^*_y(t, x_j, \Delta^j v^{N,j}) [\Delta^j v^{N,i}(t,\bm{x})]_y + 
\beta(t,\bm{x})\cdot \Delta^i v^{N,i}(t,\bm{x}) + L(x_i, \beta(t,\bm{x})) +  F^{N,i}(\bm{x}) \geq 0$$
 for any $t,\bm{x}$.
Applying first It\^o formula (Theorem II.5.1 in
\cite{IK}, p. 66) and then Lemma 3 in \cite{alekos} and the above inequality, we obtain 
\begin{align*}
v^{N,i}(t, \bm{x}) &= \mathbb{E} \left[v^{N,i}(T, \bm{X}_T) - \int_t^T \frac{\partial {v}}{\partial t}^{N,i}(s,\bm{X}_s)ds\right]\\
   &- \sum_{j=1}^N \mathbb{E} \left[\int_t^T \int_\Xi \left[v^{N,i}\big(X_1(s),\dots, X_j(s) + f(X_j(s),\xi,
	[\boldsymbol{\alpha}^{N*,-i}; \beta] (s,\bm{X}_s)),\dots,X_N(s)\big)\right.\right.\\
	&\left.\left.\quad \qquad -v^{N,i}(\bm{X}_s)\right]\nu(d\xi)ds\right]\\
	&= \mathbb{E} \left[v^{N,i}(T, \bm{X}_T) \right.\\
	& \left.-\int_t^T \left(\frac{\partial {v}}{\partial t}^{N,i}(s,\bm{X}_s)
	+ \sum_{j\neq i} \alpha^{j,*}(s,\bm{X}_s) \cdot \Delta^j v^{N,i}(s,\bm{X}_s) + \beta(t,\bm{X}_s)\cdot \Delta^i v^{N,i}(t,\bm{X}_s)\right)ds \right]\\
	&\leq \mathbb{E} \left[G^{N,i}(T, \bm{X}_T) +\int_t^T
	\left(L(X_i(s), \beta(s,\bm{X}_s)) + F^{N,i}(\bm{X}_s)\right)ds\right]\\
	&=: J_i^N(t,\bm{x},[\boldsymbol{\alpha}^{N*,-i}; \beta]).
\end{align*}
Replacing $\beta$ by $\alpha^{i*}$ the inequalities become equalities.
\end{proof}

\begin{rem}
\label{boundv}
It is important to observe that the solution $v^{N,i}$ to \eqref{eqn: HJB} is uniformly bounded with respect to $N$. Namely, there exists a constant $K > 0$ such that $\sup_{\bm{x} \in \Sigma^N} |v^{N,i}(t, \bm{x})| \leq K$, where the constant $K$ is independent of $N$, $i$ and $t$. This and \eqref{bounddelta} immediately imply an analogous bound for $|\Delta^i v^{N,i}(t,\bm{x})|$: it is for this reason that the only local regularity 
(assumptions \textbf{(H1)} and \textbf{(RegH)}) for $H(x,p)$ with respect to $p$ is enough for getting the convergence and the well-posedness results.
\end{rem}

We are interested in studying the limit of the (\ref{eqn: HJB}) system as $N \to \infty$ under symmetric properties for the $N$-player game. Namely, we assume that the players are all identical and indistinguishable. In practice, this symmetry is expressed through the following mean-field assumptions on the costs: 
\begin{align*}
\label{eqn:MF}
F^{N,i}(\bm{x}) & = F(x_i, m_{\bm{x}}^{N,i}), \\
\tag{M-F}
G^{N,i}(\bm{x}) & = G(x_i, m_{\bm{x}}^{N,i}),
\end{align*}
for some $F$ and $G : \Sigma \times P(\Sigma) \to \mathbb{R}$.
An easy but crucial consequence of assumptions \eqref{eqn:MF} and the uniqueness of solution to system \eqref{eqn: HJB} is that the solution $v^{N,i}$ of such system enjoys symmetric properties:

\begin{prop}
\label{simmetry}
Under the mean-field assumptions \eqref{eqn:MF}, there exists $v^N: [0,T] \times\Sigma^N\rightarrow\mathbb{R}^d$ such that the solutions $v^{N,i}$ to system \eqref{eqn: HJB} satisfy, for $i = 1,\dots, N$,
\begin{equation}
v^{N,i}(t, \bm{x}) = v^N(t, x_i, (x_1, \dots, x_{i-1}, x_{i+1}, \dots, x_N)),
\end{equation}
 for any $(t,x) \in [0,T] \times \Sigma$, and the function 
 $$
 \Sigma^{N-1} \ni (y_1, \dots, y_{N-1}) \to v^N(t,x, (y_1, \dots, y_{N-1}))
 $$
 is invariant under permutations of $(y_1, \dots, y_{N-1})$.
\end{prop}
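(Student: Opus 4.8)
The plan is to exploit the uniqueness of the solution to \eqref{eqn: HJB} together with the permutation symmetry that the mean-field assumptions \eqref{eqn:MF} confer on the system. For a permutation $\sigma$ of $\{1,\dots,N\}$ I would introduce the permuted configuration $\bm{x}^\sigma$ defined by $(\bm{x}^\sigma)_j := x_{\sigma^{-1}(j)}$, so that the state $x_i$ sits in slot $\sigma(i)$, and the permuted candidate family $\tilde v^{N,i}(t,\bm{x}) := v^{N,\sigma(i)}(t,\bm{x}^\sigma)$. The first task is to record how the building blocks of \eqref{eqn: HJB} transform under this relabelling: first, $m^{N,\sigma(i)}_{\bm{x}^\sigma} = m^{N,i}_{\bm{x}}$, because deleting slot $\sigma(i)$ from $\bm{x}^\sigma$ leaves exactly the multiset $\{x_k : k\neq i\}$, whence by \eqref{eqn:MF} one gets $F^{N,\sigma(i)}(\bm{x}^\sigma) = F^{N,i}(\bm{x})$ and likewise for $G$; second, $\Delta^j \tilde v^{N,i}(\bm{x}) = \Delta^{\sigma(j)} v^{N,\sigma(i)}(\bm{x}^\sigma)$, since modifying the $j$-th coordinate of $\bm{x}$ amounts to modifying the $\sigma(j)$-th coordinate of $\bm{x}^\sigma$. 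These two facts are elementary, but the whole argument hinges on getting the index bookkeeping right, and this is the only real obstacle.

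Next I would substitute $\tilde v^{N,i}$ into \eqref{eqn: HJB}. Using the second transformation rule, the interaction sum $\sum_{j\neq i}\alpha^*(x_j,\Delta^j\tilde v^{N,j})\cdot\Delta^j\tilde v^{N,i}$ becomes, after the change of index $k=\sigma(j)$ (which ranges over $\{1,\dots,N\}\setminus\{\sigma(i)\}$) and the identity $x_j=(\bm{x}^\sigma)_{\sigma(j)}$, exactly $\sum_{k\neq\sigma(i)}\alpha^*((\bm{x}^\sigma)_k,\Delta^k v^{N,k}(\bm{x}^\sigma))\cdot\Delta^k v^{N,\sigma(i)}(\bm{x}^\sigma)$; similarly the Hamiltonian term becomes $H((\bm{x}^\sigma)_{\sigma(i)},\Delta^{\sigma(i)}v^{N,\sigma(i)}(\bm{x}^\sigma))$, and $\partial_t\tilde v^{N,i}(\bm{x})=\partial_t v^{N,\sigma(i)}(\bm{x}^\sigma)$. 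Hence the left-hand side of the $i$-th equation evaluated at $\bm{x}$ for the family $(\tilde v^{N,\cdot})$ coincides with the left-hand side of the $\sigma(i)$-th equation evaluated at $\bm{x}^\sigma$ for $(v^{N,\cdot})$, which by construction equals $F^{N,\sigma(i)}(\bm{x}^\sigma)=F^{N,i}(\bm{x})$; the terminal condition matches in the same way. Thus $(\tilde v^{N,i})_i$ solves \eqref{eqn: HJB}, and by the uniqueness of the solution (standard ODE theory, as already noted after \eqref{eqn: HJB}) we conclude $v^{N,i}(t,\bm{x}) = v^{N,\sigma(i)}(t,\bm{x}^\sigma)$ for every permutation $\sigma$.

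Finally I would read off the two assertions from this invariance. Define $v^N(t,x,(y_1,\dots,y_{N-1})) := v^{N,N}(t,y_1,\dots,y_{N-1},x)$ and, for fixed $i$, choose the permutation $\sigma$ that sends $i\mapsto N$ and maps $\{1,\dots,N\}\setminus\{i\}$ order-preservingly onto $\{1,\dots,N-1\}$; then $(\bm{x}^\sigma)_N=x_i$ and $((\bm{x}^\sigma)_1,\dots,(\bm{x}^\sigma)_{N-1})=(x_1,\dots,x_{i-1},x_{i+1},\dots,x_N)$, so the relation $v^{N,i}(t,\bm{x})=v^{N,N}(t,\bm{x}^\sigma)$ gives precisely $v^{N,i}(t,\bm{x})=v^N(t,x_i,(x_1,\dots,x_{i-1},x_{i+1},\dots,x_N))$. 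Applying the invariance instead to permutations $\tau$ that fix $N$ and act arbitrarily on $\{1,\dots,N-1\}$ shows that $(y_1,\dots,y_{N-1})\mapsto v^N(t,x,(y_1,\dots,y_{N-1}))=v^{N,N}(t,y_1,\dots,y_{N-1},x)$ is invariant under permutations, completing the proof. The conceptual content is just \emph{symmetry of the equations plus uniqueness}; I would simply fix the convention $(\bm{x}^\sigma)_j=x_{\sigma^{-1}(j)}$ at the outset and verify the two transformation rules with care, after which everything reduces to a direct substitution.
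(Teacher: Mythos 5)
Your proof is correct and follows the same strategy as the paper: the mean-field structure of the costs makes the system \eqref{eqn: HJB} invariant under relabelling of the players, and uniqueness of its solution forces the value functions to inherit that symmetry. In fact your version is the more complete one --- the paper's proof only spells out transpositions of two coordinates $x_j,x_k$ with $j,k\neq i$, which yields the permutation invariance in the last $N-1$ arguments but not, by itself, the existence of a single $v^N$ independent of $i$; your use of general permutations $\sigma$ (including those moving $i$), with the relation $v^{N,i}(t,\bm{x})=v^{N,\sigma(i)}(t,\bm{x}^\sigma)$ and the careful index bookkeeping behind it, supplies exactly what is needed for that part of the statement.
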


\begin{proof}
Let $\tilde{\bm{x}}$ be defined from $\bm{x}$ after exchanging $x_k$ with $x_j$, for $j \neq k \neq i$.
Because of \eqref{eqn:MF}, we have that $F^{N,i}(\bm{x}) = F^{N,i}(\tilde{\bm{x}})$ and $G^{N,i}(\bm{x}) = G^{N,i}(\tilde{\bm{x}})$ and thus, by the uniqueness of solution to \eqref{eqn: HJB} we conclude $v^{N,i}(t,\bm{x}) = v^{N,i}(t,\tilde{\bm{x}})$.
\end{proof}
The above proposition motivates the study of a possible convergence of system \eqref{eqn: HJB} to a limiting system, by analyzing directly the limit of the functions $v^N$.


\subsection{Mean Field Game and Master Equation}

The limit as $N \to \infty$ would expectedly be characterized by a continuum of players in which the representative agent evolves according to the dynamics 
\begin{equation}
\label{eqn:ref_lim}
X(t)  = Z + \int_{0}^t \int_\Xi f( X(s^{-}), \xi, \alpha(s,X(s^-)))\mathcal{N}(ds,d\xi), \ \ t \in [0,T],
\end{equation}
where the law of the initial condition $Z$ is $m_0$ and $\mathcal{N}$ is a Poisson Random measure with intensity measure $\nu$ defined in \eqref{nu}. The controls are feedback in $\mathbb{A}$, which denotes the space of measurable functions $\alpha:[0,T]\times\Sigma\longrightarrow A$.
The empirical measure of the $N$ players is replaced by a deterministic flow of probability measures 
$m : [0,T] \to P(\Sigma)$. The associated cost is 
\be
J(\alpha,m):= \mathbb{E} \left[ \int_0^T
	\left[L(X(t), \alpha(t,X(t))) + F(X(t),m(t))\right]dt +G(X(T), m(T))\right].
\ee 

In literature, such limiting dynamics are described by the celebrated Mean Field Game system, whose unknowns are two functions $(u,m)$. The equation in $u$ describes the dynamics of the value function of the reference player, which optimizes his/her payoff under the influence of the collective behavior of the others, while the equation in $m$ describes the evolution of the distribution of the players. 
In our discrete setting the Mean Field Game system takes the following form of a strongly coupled system of ODEs:
\begin{equation}
\tag{MFG}
\begin{cases}
-\frac{d}{dt} u(t,x) + H(x, \Delta^x u(t,x)) =  F(x, m(t)), \\
\frac{d}{dt} m_x(t) = \sum_y m_y(t) \alpha^{*}_x(y, \Delta^y u(t,y)),\\
u(T,x) = G(x, m(T)), \\
m_x(t_0) = m_{x,0},
\end{cases}
\label{eqn:MFG}
\end{equation}
with $\alpha^{*}(x,p)$  defined in \eqref{stella} and $u,m:[0,T]\times \Sigma \longrightarrow \mathbb{R}$. From a probabilistic point of view the solution $(u,m)$ can be seen as a fixed point: 
starting with a flow $m$ solve the first equation, the backward Hamilton-Jacobi-Bellman (HJB) equation for $u$, which yields a unique optimal feedback control $\alpha$ for $m$; then, impose that the flow of the corresponding solution  $X$ to \eqref{eqn:ref_lim} is exactly $m$, 
giving thus the second equation, the forward Kolmogorov-Fokker-Planck (KFP) for $m$. Hence, given a solution $(u,m)$ to \eqref{eqn:MFG},
 we have
$J(\alpha,m)\leq J(\beta,m)$ for any $\beta\in\mathbb{A}$, where $\alpha(t,x)= \alpha^{*}(x, \Delta^x u(t,x))$, and 
$Law(X(t))=m(t)$ for any $t\in [0,T]$. 

As already mentioned, recently in \cite{card}  a new technique involving the use of the so-called Master Equation was introduced to get the exact relation between symmetric $N$-Player Differential Games and Mean Field Games.
Generally speaking, the Master Equation summarizes all the information needed to get solutions to the Mean Field Game: namely, one can prove that the system \eqref{eqn:MFG} provides the characteristic curves for \eqref{eqn:M}. Indeed, $U(t_0, x, m_0) := u(t_0,x)$ solves \eqref{eqn:M}, $(u,m)$ being the solution to the Mean Field Game system \eqref{eqn:MFG} starting at time $t_0$ up to time $T$, with $m(t_0)=m_0$. 
Moreover, in the Introduction we already motivated heuristically the convergence result of system \eqref{eqn: HJB} to the Master Equation \eqref{eqn:M}. As it will be clear from the convergence argument, all that is needed is the existence of a regular solution to \eqref{eqn:M}.

To be specific on the needed regularity, we conclude this section with the definition of regular solution to \eqref{eqn:M}.
\begin{defn}
\label{def}
A function $U : [0,T] \times \Sigma \times P(\Sigma) \to \mathbb{R}$ is said to be a \emph{classical solution} to \eqref{eqn:M} 
if it is continuous in all its arguments, $C^1$ in $t$ and $C^1$ in $m$ 
and, for any $(t,x,m) \in [0,T] \times \Sigma \times P(\Sigma)$ we have
$$
\begin{cases}
& -\frac{\partial U}{\partial t}  + H(x, \Delta^x U) - \int_{\Sigma} D^m U(t,x,m,y) \cdot \alpha^*(y, \Delta^y U(t,y,m)) dm(y) = F(x,m),  \\
& U(T,x,m) = G(x,m), \ \ \ (x,m) \in \Sigma \times P(\Sigma).
\end{cases}
$$
In particular then $\Delta^x U(t,x,\cdot): P(\Sigma) \to \mathbb{R}^d$ is bounded and Lipschitz continuous and
 $D^m U(t,x,\cdot) : P(\Sigma) \to \mathbb{R}^{d\times d} $ is bounded. 

Moreover, we say that $U$ is a \emph{regular solution} to \eqref{eqn:M} if it is a classical solution and $D^m U(t,x,\cdot)$ is also Lipschitz continuous in $m$, uniformly in $(t,x)$.
\end{defn}
Let us observe that in the Master Equation we could replace $D^m U(t,x,m,y)$ by $D^m U(t,x,m,1)$, thanks to property \eqref{eqn:identity} of the derivative. Under sufficient conditions, we will prove in Section \ref{mastersection} the existence and uniqueness of a regular solution to \eqref{eqn:M}.

\section{The convergence argument}
\label{convergence}

In this section we take for granted the well-posedness of the Master Equation (\ref{eqn:M}) 
 and focus on the study of the convergence.
We give the precise statement of the convergence in terms of two theorems: the first one describes the convergence in average of the value functions, while the second one is a propagation of chaos for the optimal trajectories.

For any $i \in \left\{1,\dots, N\right\}$ and $x \in \Sigma$, set
\begin{equation*}
w^{N,i}(t_0, x, m_0) := \sum_{x_1 = 1}^d \dots \sum_{x_{i-1} =1}^d \sum_{x_{i+1} =1}^d \dots  \sum_{x_N = 1}^d v^{N,i}(t_0, \bm{x}) \prod_{j \neq i} m_0(x_j),
\end{equation*}
where $\bm{x} = (x_1, \dots, x_N)$, and 
\begin{equation*}
||w^{N,i}(t_0, \cdot, m_0) - U(t_0, \cdot, m_0)||_{L^1(m_0)} := \sum_{x=1}^d |w^{N,i}(t_0, x, m_0) - U(t_0, x, m_0)| m_0(x).
\end{equation*}
The main result is given by the following
\begin{thm}
\label{fund}
Assume \textbf{(H1)} and that (\ref{eqn:M}) admits a unique regular solution $U$ in the sense of Definition \ref{def}. 
Fix $N \geq 1$, $(t_0, m_0) \in [0,T] \times P(\Sigma)$, $\bm{x} \in \Sigma^N$  and let $(v^{N,i})$ be the solution to (\ref{eqn: HJB}). Then 
\begin{align}
\frac{1}{N} \sum_{i=1}^N |v^{N,i}(t_0, \bm{x}) - U(t_0, x_i, m_{\bm{x}}^N)| &\leq \frac CN
\label{4}\\
||w^{N,i}(t_0, \cdot, m_0) - U(t_0, \cdot, m_0)||_{L^1(m_0)} &\leq \frac{C}{\sqrt{N}}.
\label{5}
\end{align}
In \eqref{4} and \eqref{5}, the constant $C$ does not depend on $i$, $t_0$, $m_0$, $\bm{x}$ nor $N$.
\end{thm}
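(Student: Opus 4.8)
The plan is to follow the coupling strategy of Cardaliaguet et al.\ \cite{card}, adapted to the finite-state setting. The key idea is to use the regular solution $U$ of \eqref{eqn:M} to build, for each initial time $t_0$ and each configuration $\bm{x}$, a candidate ``lifted'' value $u^{N,i}(t,\bm{x}) := U(t, x_i, m^{N,i}_{\bm{x}})$ and to estimate how far $u^{N,i}$ is from solving the exact \eqref{eqn: HJB} system. Plugging $u^{N,i}$ into the operator appearing in \eqref{eqn: HJB} and using the Master Equation \eqref{eqn:M} together with the Taylor-type expansions of finite differences sketched in the Introduction, one finds that $u^{N,i}$ solves \eqref{eqn: HJB} up to a remainder $r^{N,i}(t,\bm{x})$ which is $O(1/N)$ pointwise: this is where the regularity of $U$ enters, since controlling the error in the expansions $\Delta^j v^{N,i} \approx \frac{1}{N-1}D^m U$ and in the Riemann-sum approximation of $\int D^m U \cdot \alpha^* \, dm$ requires the boundedness and Lipschitz continuity of $\Delta^x U$ and $D^m U$ in $m$. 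The term coming from the coupling across players $j \neq i$, namely $\sum_{j\ne i}\alpha^*(x_j,\Delta^j u^{N,j})\cdot\Delta^j u^{N,i}$, is the delicate one: one replaces $\Delta^j u^{N,j}$ by $\Delta^x U(t,x_j,m^{N,j}_{\bm{x}})$ and $\Delta^j u^{N,i}$ by $\frac1{N-1}D^mU(t,x_i,m^{N,i}_{\bm{x}},x_j)$, each at the cost of an $O(1/N^2)$ error per index, hence $O(1/N)$ after summing, and then recognizes the resulting sum as a Riemann sum for the integral in \eqref{eqn:M} with a further $O(1/N)$ error.

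Next I would set up the coupling. Let $\bm{X}^{t_0,\bm{x}}$ be the optimal $N$-player trajectory, i.e.\ the solution of \eqref{Nsde} driven by the equilibrium feedback $\alpha^{i,*}(t,\bm{x}) = \alpha^*(x_i,\Delta^i v^{N,i}(t,\bm{x}))$. Apply the It\^o formula (Theorem II.5.1 in \cite{IK}) to $t \mapsto v^{N,i}(t,\bm{X}^{t_0,\bm{x}}_t) - u^{N,i}(t,\bm{X}^{t_0,\bm{x}}_t)$ between $t_0$ and $T$. At the terminal time the difference vanishes because $v^{N,i}(T,\bm{x}) = G(x_i,m^{N,i}_{\bm{x}}) = U(T,x_i,m^{N,i}_{\bm{x}}) = u^{N,i}(T,\bm{x})$. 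The generator terms: for $v^{N,i}$ one uses exactly \eqref{eqn: HJB}; for $u^{N,i}$ one uses that it solves \eqref{eqn: HJB} up to the remainder $r^{N,i}$. Crucially, the jump part of the generator acting on $u^{N,i}$ involves the feedback $\alpha^{j,*}$ built from $v^{N,j}$, not from $U$; to close the estimate one uses the Lipschitz continuity of $\alpha^*$ (assumption \textbf{(H1)}) together with the uniform bound on $|\Delta^j v^{N,j}|$ and on $|\Delta^j u^{N,j}|$ from Remark \ref{boundv} and \eqref{bounddelta}, so that $|\alpha^{j,*} - \alpha^*(x_j,\Delta^j u^{N,j})| \le C|v^{N,j} - u^{N,j}|_{\text{(difference)}}$. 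Taking expectations, the martingale part disappears, and one is left with an inequality of the form
\[
\sup_{t\in[t_0,T]} \mathbb{E}\Big[\tfrac1N\sum_{i=1}^N |v^{N,i}(t,\bm{X}_t) - u^{N,i}(t,\bm{X}_t)|\Big] \le \frac CN + C\int_{t_0}^T \sup_{s\le r}\mathbb{E}\Big[\tfrac1N\sum_{i=1}^N|v^{N,i}(s,\bm{X}_s) - u^{N,i}(s,\bm{X}_s)|\Big]\,dr,
\]
after averaging over $i$ to symmetrize the cross terms and using exchangeability. Grönwall's lemma then yields the $O(1/N)$ bound at $t=t_0$, and since $\bm{X}^{t_0,\bm{x}}_{t_0} = \bm{x}$ deterministically, this is precisely \eqref{4} once one notes $|U(t_0,x_i,m^{N,i}_{\bm{x}}) - U(t_0,x_i,m^N_{\bm{x}})| \le C/N$ by Lipschitz continuity of $U$ in $m$ (since $\bm{d}_1(m^{N,i}_{\bm{x}},m^N_{\bm{x}}) \le C/N$).

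For \eqref{5}, I would take $\bm{x} = (x_1,\dots,x_N)$ random, i.i.d.\ with law $m_0$, and independent of the noise, so that $w^{N,i}(t_0,x,m_0) = \mathbb{E}[v^{N,i}(t_0,\bm{Z})\mid Z_i = x]$ with $\bm{Z}\sim m_0^{\otimes N}$. Writing $w^{N,i}(t_0,x,m_0) - U(t_0,x,m_0) = \mathbb{E}[v^{N,i}(t_0,\bm{Z}) - U(t_0,x,m^{N,i}_{\bm{Z}})\mid Z_i=x] + \mathbb{E}[U(t_0,x,m^{N,i}_{\bm{Z}}) - U(t_0,x,m_0)\mid Z_i=x]$, the first expectation is $O(1/N)$ by \eqref{4} (conditionally), and the second is controlled by $\mathbb{E}[\bm{d}_1(m^{N,i}_{\bm{Z}}, m_0)]$ using the Lipschitz continuity of $U$ in $m$; by a standard law-of-large-numbers / fluctuation estimate for empirical measures of $N-1$ i.i.d.\ $\Sigma$-valued variables (each coordinate of $m^{N,i}_{\bm{Z}} - m_0$ has standard deviation $O(1/\sqrt N)$), this is $O(1/\sqrt N)$. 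Summing against $m_0(x)$ gives \eqref{5}.

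The main obstacle is the first step: showing that the lifted function $u^{N,i}(t,\bm{x}) = U(t,x_i,m^{N,i}_{\bm{x}})$ solves the $N$-player HJB system with an $O(1/N)$ remainder. This requires carefully expanding the discrete differences $\Delta^j u^{N,i}$ for $j \ne i$ to second order in $1/N$ — i.e.\ making rigorous the heuristic $\Delta^j u^{N,i} = \frac1{N-1}D^mU(t,x_i,m^{N,i}_{\bm{x}},x_j) + O(1/N^2)$, which needs a quantitative second-order Taylor expansion of $U$ in $m$ along the segment from $m^{N,i}_{\bm{x}}$ to $m^{N,i}_{\bm{x}} + \frac1{N-1}(\delta_y - \delta_{x_j})$; the Lipschitz continuity of $D^mU$ in $m$ (the ``regular solution'' hypothesis) is exactly what converts the first-order remainder into a genuine $O(1/N^2)$ bound — and controlling the error between the sum $\frac1{N-1}\sum_{j\ne i} D^mU(t,x_i,m^{N,i}_{\bm{x}},x_j)\cdot\alpha^*(x_j,\Delta^x U(t,x_j,m^{N,i}_{\bm{x}}))$ and the integral $\int_\Sigma D^mU(t,x_i,m^{N,i}_{\bm{x}},y)\cdot\alpha^*(y,\Delta^y U(t,y,m^{N,i}_{\bm{x}}))\,dm^{N,i}_{\bm{x}}(y)$ — which in the finite-state case is in fact an identity up to the distinction between $m^{N,i}_{\bm{x}}$ and $m^{N,j}_{\bm{x}}$, costing another $O(1/N)$ via Lipschitz continuity of $\Delta^x U$ and of $\alpha^*$. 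Once this ``consistency estimate'' is in hand, the Grönwall argument is routine.
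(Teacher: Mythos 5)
Your overall architecture matches the paper's: lift $U$ to $u^{N,i}(t,\bm{x}) := U(t,x_i,m^{N,i}_{\bm{x}})$, show it solves \eqref{eqn: HJB} up to an $O(1/N)$ remainder (this is exactly Propositions \ref{prop1} and \ref{prop3.2}, and your account of where the Lipschitz continuity of $D^m U$ enters is correct), couple along the optimal trajectory, and obtain \eqref{5} from \eqref{4} via the decomposition through $m^{N,i}_{\bm{Z}}$ and the $O(1/\sqrt N)$ rate for empirical measures of i.i.d.\ samples. The consistency estimate and the derivation of \eqref{5} are essentially the paper's.

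The gap is in the Gronwall step. You apply It\^o's formula to the first power $v^{N,i}-u^{N,i}$ along the optimal trajectory $\bm{Y}$ and claim a closed inequality for $\sup_t \mathbb{E}\big[\tfrac1N\sum_i|v^{N,i}-u^{N,i}|\big]$. But after the cancellation between the jump part of the generator and the transport terms of the two equations, what survives on the right-hand side is
\[
\sum_{j\ne i}(\alpha^j-\tilde\alpha^j)\cdot\Delta^j u^{N,i} \;+\; H(x_i,\Delta^i u^{N,i}) - H(x_i,\Delta^i v^{N,i}) \;+\; \alpha^i\cdot\Delta^i(u^{N,i}-v^{N,i}) \;-\; r^{N,i},
\]
which under \textbf{(H1)} is bounded by $\tfrac CN\sum_{j\ne i}\big|\Delta^j(u^{N,j}-v^{N,j})\big| + C\big|\Delta^i(u^{N,i}-v^{N,i})\big| + \tfrac CN$. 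These are \emph{finite differences} of the error: they involve the values of $u^{N,j}-v^{N,j}$ at configurations neighboring $\bm{Y}_s$, and they are not controlled by $\mathbb{E}\big|u^{N,i}(s,\bm{Y}_s)-v^{N,i}(s,\bm{Y}_s)\big|$; averaging over $i$ and exchangeability do not repair this. (A sup-over-configurations Gronwall fails as well, since the term $\sum_{j\ne i}\alpha^j\cdot\Delta^j(u^{N,i}-v^{N,i})$ would then contribute $N-1$ uncancelled terms and produce a constant of order $e^{CN}$.) The paper's resolution is to apply It\^o's formula to the \emph{square} $(u^{N,i}-v^{N,i})^2$: the jump part then generates the coercive term $\sum_j\alpha^j\cdot\big(\Delta^j(u^{N,i}-v^{N,i})\big)^2 \ge \kappa\int|\Delta^i(u^{N,i}-v^{N,i})|^2$ on the left-hand side, and the dangerous finite-difference terms on the right are absorbed into it by $AB\le\varepsilon A^2+B^2/(4\varepsilon)$ together with exchangeability; this is the content of Theorem \ref{crucial}, estimates \eqref{eqn:2}--\eqref{eqn:3}. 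Without this quadratic coercivity mechanism your Gronwall loop does not close, so the central estimate of your proposal is not established as written.
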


As stated above, the convergence can be studied also in terms of the optimal trajectories. 
Consider the optimal process 
$\bm{Y}_t = (Y_1(t),\dots,Y_N(t))_{t \in [0, T]}$ for the $N$-player game:
\begin{equation}
 Y_i(t)  = Z_i + \int_0^t\int_\Xi \sum_{y \in \Sigma} (y - Y_i(s^-)) \mathbbm{1}_{]0, \alpha_y^i(s, \bm{Y}_{s^-}) [} (\xi_y)
\mathcal{N}_i(ds,d\xi), \ \ \  t \in [0, T]
\label{eqn:13}
\end{equation}
where $\alpha^i_y(t, \bm{Y}_t)$ is the optimal feedback, i.e. 
$\alpha^i_y(t, \bm{y}) := [\alpha^*(y_i, \Delta^i v^{N,i}(t,\bm{y}))]_y$.
Moreover, let $\tilde{\bm{X}}_t = (\tilde{X}_1(t), \dots, \tilde{X}_N(t))_{t \in [0,T]}$ be the i.i.d. process solution to
\begin{equation}
\label{limit}
\tilde{X}_{i,t}  = Z_i + \int_0^t \int_\Xi \sum_{y \in \Sigma} (y - \tilde{X}_{i}(s^-)) \mathbbm{1}_{]0, \tilde{\alpha}_y^i(s, \tilde{\bm{X}}_{s^-}) [} (\xi_y)\mathcal{N}_i(ds,d\xi), \ \ \  t \in [0, T]
\end{equation}
with $\tilde{\alpha}^i_y(t, \tilde{\bm{X}}_t) := [\alpha^*\left(\tilde{X}_i(t), \Delta^x U(t, \tilde{X}_i(t), Law(\tilde{X}_i(t)))\right)]_y$.
We remark that $Law(\tilde{X}_i(t))=m(t)$ with $m$ the solution to the Mean Field Game.

\begin{thm}
\label{chaos}
Under the same assumptions of Theorem \ref{fund}, for any $N \geq 1$ and any $i \in \left\{1,\dots, N\right\}$, we have 
\begin{equation}
\mathbb{E}\left[\sup_{t\in[0,T]}\left|Y_i(t) - \tilde{X}_i(t)\right|\right] \leq CN^{- \frac{1}{9}}
\label{cha}
\end{equation}
for some constant $C > 0$ independent of $m_0$ and $N$. In particular we obtain the Law of Large Numbers
\be
\mathbb{E}\left[\sup_{t\in[0,T]}\left|m^N_{\bm{Y}}(t) - m(t)\right|\right] \leq CN^{- \frac{1}{9}}.
\label{lln}
\ee 
\end{thm}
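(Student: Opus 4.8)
The strategy is the classical coupling argument underlying propagation of chaos, adapted to the finite-state jump setting and driven by the same Poisson random measures $\mathcal{N}_i$. The crucial bridge is Theorem \ref{fund}: it tells us that the prelimit value functions $v^{N,i}$ — and hence, via \eqref{bounddelta} and the regularity of $U$, the prelimit feedback rates $\alpha^i_y(t,\bm{y}) = [\alpha^*(y_i, \Delta^i v^{N,i}(t,\bm{y}))]_y$ — are close to the limiting rates $\tilde\alpha^i_y(t,\tilde{\bm{X}}_t) = [\alpha^*(\tilde X_i(t), \Delta^x U(t,\tilde X_i(t), m(t)))]_y$ when evaluated along the empirical measure. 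First I would estimate $\mathbb{E}[\sup_{t\le T}|Y_i(t) - \tilde X_i(t)|]$ by writing both processes with respect to the common $\mathcal{N}_i$ and subtracting: since both are $\{-d,\dots,d\}$-valued jump processes, $|Y_i(t) - \tilde X_i(t)|$ increases only at jump times, and it can increase only when exactly one of the two indicators $\mathbbm{1}_{]0,\alpha^i_y(s,\bm{Y}_{s^-})[}(\xi_y)$, $\mathbbm{1}_{]0,\tilde\alpha^i_y(s,\tilde{\bm{X}}_{s^-})[}(\xi_y)$ fires, or when both fire but the current states differ. Taking expectations and using the intensity $\nu$ from \eqref{nu}, one gets a Gronwall-type inequality whose source term is controlled by (i) $\mathbb{E}\big[\sum_y |\alpha^i_y(s,\bm{Y}_{s^-}) - \tilde\alpha^i_y(s,\tilde{\bm{X}}_{s^-})|\big]$ and (ii) $\mathbb{E}[\mathbbm{1}_{Y_i(s^-)\ne \tilde X_i(s^-)}]$, the latter being absorbed into the Gronwall constant.

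The term (i) is where Theorem \ref{fund} enters. Using the Lipschitz continuity of $\alpha^*$ in $p$ (assumption \textbf{(H1)}) and of $\Delta^x U$, $D^m U$ in $m$ (Definition \ref{def}), I would split
\[
|\alpha^i_y(s,\bm{Y}_{s^-}) - \tilde\alpha^i_y(s,\tilde{\bm{X}}_{s^-})| \le C\big|\Delta^i v^{N,i}(s,\bm{Y}_{s^-}) - \Delta^x U(s, Y_i(s^-), m^N_{\bm{Y}}(s^-))\big| + C\,\bm{d}_1\big(m^N_{\bm{Y}}(s^-), m(s)\big) + C\,\mathbbm{1}_{Y_i(s^-)\ne \tilde X_i(s^-)}.
\]
The first summand on the right is handled by the pointwise/$L^1$ estimate of Theorem \ref{fund} — here one needs to be slightly careful: \eqref{4} gives an average over $i$ of $|v^{N,i}-U|$, so one upgrades to a gradient bound $|\Delta^i v^{N,i}(s,\bm{x}) - \Delta^x U(s,x_i,m^N_{\bm{x}})|$ in an averaged sense, and the symmetry of the game (Proposition \ref{simmetry}) lets us distribute this over the players. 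The second summand, $\bm{d}_1(m^N_{\bm{Y}}(s), m(s))$, is estimated by inserting $m^N_{\tilde{\bm{X}}}(s)$: $\bm{d}_1(m^N_{\bm{Y}}(s), m^N_{\tilde{\bm{X}}}(s)) \le \frac1N\sum_j |Y_j(s) - \tilde X_j(s)|$, which by exchangeability equals (in expectation) $\mathbb{E}[|Y_i(s) - \tilde X_i(s)|]$, i.e. it feeds back into the quantity we are estimating; and $\mathbb{E}[\bm{d}_1(m^N_{\tilde{\bm{X}}}(s), m(s))] \le C/\sqrt N$ is the standard rate of convergence of the empirical measure of i.i.d. samples to their common law, valid uniformly in $t$ (one can even get it uniformly in $\sup_t$ by a maximal inequality on the finitely many coordinates $m^N_{\tilde{\bm{X}},x}(t)$). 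Assembling these, Gronwall's lemma would yield $\mathbb{E}[\sup_{t\le T}|Y_i(t) - \tilde X_i(t)|] \le C(N^{-1/2} + (\text{error from Theorem \ref{fund}}))$.

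The reason the final rate is only $N^{-1/9}$ rather than $N^{-1/2}$ — and this is the main obstacle — is that Theorem \ref{fund} controls $v^{N,i} - U$ only in $L^1(m_0)$ with rate $N^{-1/2}$ (estimate \eqref{5}), or pointwise with rate $N^{-1}$ after averaging over players (estimate \eqref{4}), whereas the coupling requires a \emph{pointwise} bound on the gradient difference \emph{along the random trajectory $\bm{Y}_{s^-}$}, whose empirical measure is a random point in $P(\Sigma)$. Converting an $L^1$-in-measure estimate into a bound good along a specific random configuration costs a loss: one typically introduces a small parameter, restricting attention to an event where $m^N_{\bm{Y}}(s)$ stays in a region where the $L^1$ bound can be localized (using the Lipschitz regularity of $U$ to control $U$ off that region and a large-deviation / concentration bound to control the probability of the complement), then optimizes the parameter. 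The exponent $1/9$ is the outcome of balancing the $N^{-1/2}$ statistical error, the $N^{-1}$ averaged error, and this localization loss; I would expect the bulk of the technical work — and the only genuinely delicate point — to be precisely this passage from the averaged estimate of Theorem \ref{fund} to a trajectorywise estimate, after which \eqref{lln} follows immediately from \eqref{cha} by $\mathbb{E}[\sup_t |m^N_{\bm{Y}}(t) - m(t)|] \le \mathbb{E}[\sup_t \frac1N\sum_i |Y_i(t) - \tilde X_i(t)|] + \mathbb{E}[\sup_t |m^N_{\tilde{\bm{X}}}(t) - m(t)|] \le C N^{-1/9}$, using exchangeability for the first term and the $N^{-1/2}$ i.i.d. rate for the second.
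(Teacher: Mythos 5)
Your high-level architecture (drive $Y_i$ and $\tilde X_i$ by the same Poisson measure, reduce to a Gronwall inequality whose source term is the difference of the jump rates, split that difference into a value-function error plus an empirical-measure error) is in the right spirit, and your derivation of \eqref{lln} from \eqref{cha} is correct. But there is a genuine gap at precisely the step you flag as the delicate one: the control of $\left|\Delta^i v^{N,i}(s,\bm{Y}_{s^-}) - \Delta^x U\big(s,Y_i(s^-), m^{N,i}_{\bm{Y}}(s^-)\big)\right|$ along the random optimal trajectory. You propose to extract this from Theorem \ref{fund} by ``distributing the averaged bound over the players by symmetry'' and then a localization/concentration argument, but neither step is carried out, and the first one fails when a state is occupied by few (or no) players: estimate \eqref{4} only yields $|V^N(t,x,\cdot)-U(t,x,\cdot)|\leq C/N_x$ with $N_x$ the occupation number of state $x$, which degenerates near the boundary of the simplex, and $V^N$ has no a priori Lipschitz constant in $m$ uniform in $N$ to interpolate with. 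The paper avoids this entirely by reversing the logical order: it first proves a trajectorywise estimate (Theorem \ref{crucial}) by applying It\^o's formula to $(u^{N,i}-v^{N,i})^2(t,\bm{Y}_t)$ with $u^{N,i}(t,\bm{x}):=U(t,x_i,m^{N,i}_{\bm{x}})$, exploiting that $u^{N,i}$ solves \eqref{eqn: HJB} up to an $O(1/N)$ remainder (Proposition \ref{prop3.2}); the coercive term $\kappa\int_0^T|\Delta^i(u^i_s-v^i_s)|^2\,ds$ surviving on the left-hand side of that energy estimate is exactly the $L^2$-in-time gradient bound along $\bm{Y}$ that your plan is missing, and Theorem \ref{fund} is then a \emph{consequence} of this coupling, not an input to it. With that estimate in hand, the comparison of $\bm{Y}$ with the intermediate system $\bm{X}$ of \eqref{eqn:12} costs only $C/N$ (estimate \eqref{eqn:1}).

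Your diagnosis of the exponent $1/9$ is also incorrect, which matters because it is the quantitative content of the theorem. In the paper the loss does not come from converting an averaged estimate into a trajectorywise one; it comes exclusively from the second, decoupling step, namely Lemma \ref{lemma_imp} (quoted from Rachev--R\"uschendorf): $\mathbb{E}\left[\sup_{t\in[0,T]}\bm{d}_1\big(m^{N,i}_{\tilde{\bm{X}}_t}, m_t\big)\right]\leq CN^{-1/9}$ for the i.i.d.\ limit processes, the poor rate being due to the supremum sitting \emph{inside} the expectation. Your assertion that the i.i.d.\ rate $N^{-1/2}$ survives ``uniformly in $\sup_t$ by a maximal inequality'' is exactly what the paper disclaims (Fournier--Guillin gives $N^{-1/2}$ only with the supremum outside the expectation), and it is the reason your proposed balancing of errors cannot reproduce $1/9$: if your two claims (uniform-in-time $N^{-1/2}$ for the i.i.d.\ empirical measure, plus a workable localization) both held, you would be proving a strictly better rate than the theorem states, without justification for either claim.
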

Notice that the supremum is taken inside the mean, giving the convergence in the space of trajectories. For this reason, we have a
 slow convergence of order $N^{-1/9}$, 
coming from a result in \cite{rachev} about the convergence of the empirical measures of a decoupled system (c.f. Lemma 1 below). Instead, if the supremum were taken outside the mean, the convergence would be of order $N^{-1/2}$, thanks to a result in \cite{fournier}.

\subsection{Approximating the optimal trajectories}

The first step in the proof of these results is to show that the projection of $U$ onto empirical measures 
\begin{equation}
\label{eqn:10}
u^{N,i}(t, \bm{x}) := U(t,x_i, m_{\bm{x}}^{N,i})
\end{equation}
satisfies the system (\ref{eqn: HJB}) up to a term of order $O(\frac{1}{N})$.
The following proposition makes rigorous the intuition we already used in the heuristic derivation of the Master Equation (\ref{eqn:M}).
In what follows, $C$ will denote any constant independent of $i,N,m_0,\bm{x}$ which is allowed to change from line to line.

\begin{prop}
\label{prop1}
Let $U$ be a regular solution to the Master Equation and
$u^{N,i}(t, \bm{x})$ be defined as in (\ref{eqn:10}). Then, for $j\neq i$, 
\begin{equation}
\Delta^j u^{N,i}(t,\bm{x}) = \frac{1}{N-1} D^m U(t,x_i, m_{\bm{x}}^{N,i}, x_j) + \tau^{N,i,j}(t,\bm{x}),
\label{uni}
\end{equation}
where $\tau^{N,i,j} \in C^0([0,T] \times \Sigma^N; \mathbb{R}^d)$, $||\tau^{N,i,j}|| \leq \frac{C}{(N-1)^2}$. 
\end{prop}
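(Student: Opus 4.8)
## Proof proposal

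The plan is to expand $\Delta^j u^{N,i}(t,\bm{x})$ directly using the definition of $u^{N,i}$ and a first-order Taylor-type expansion of $U$ in the measure argument. Fix $i$, $j \neq i$, and $(t,\bm{x})$. Writing $m := m_{\bm{x}}^{N,i} = \frac{1}{N-1}\sum_{k\neq i}\delta_{x_k}$, note that replacing the $j$-th coordinate $x_j$ by $y$ changes the empirical measure of the players other than $i$ from $m$ to $m + \frac{1}{N-1}(\delta_y - \delta_{x_j})$. Hence, for each $y \in \Sigma$,
\begin{equation*}
[\Delta^j u^{N,i}(t,\bm{x})]_y = U\!\left(t, x_i, m + \tfrac{1}{N-1}(\delta_y - \delta_{x_j})\right) - U(t,x_i,m).
\end{equation*}
So the whole claim reduces to a quantitative first-order expansion of $s \mapsto U(t,x_i, m + s(\delta_y - \delta_{x_j}))$ at $s = 0$, evaluated at $s = \frac{1}{N-1}$, with the linear term being $\frac{1}{N-1}[D^m U(t,x_i,m,x_j)]_y$ by the very definition of $D^m U$ (along the direction $\delta_y - \delta_{x_j}$; recall $[D^mU(m,y)]_y$-type terms vanish and the identity \eqref{eqn:identity} lets us pass between base points).

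The key step is therefore to control the remainder $\tau^{N,i,j}_y(t,\bm{x}) := U(t,x_i,m+s_N(\delta_y-\delta_{x_j})) - U(t,x_i,m) - s_N [D^m U(t,x_i,m,x_j)]_y$ with $s_N = \frac{1}{N-1}$, and show $|\tau^{N,i,j}_y| \le C/(N-1)^2$ uniformly. I would write
\begin{equation*}
\tau^{N,i,j}_y(t,\bm{x}) = \int_0^{s_N} \Big( [D^m U(t,x_i, m + r(\delta_y-\delta_{x_j}), x_j)]_y - [D^m U(t,x_i, m, x_j)]_y \Big)\, dr,
\end{equation*}
using the fundamental theorem of calculus along the segment (valid since $D^m U$ is the directional derivative in the direction $\delta_y - \delta_{x_j}$, and $U$ is $C^1$ in $m$ so this one-variable function is $C^1$). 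Then the regularity hypothesis on $U$ — namely that $D^m U(t,x,\cdot)$ is Lipschitz in $m$, uniformly in $(t,x)$, which is exactly what "regular solution" in Definition \ref{def} provides over the genuine classical solution — bounds the integrand by $\mathrm{Lip}(D^m U) \cdot r |\delta_y - \delta_{x_j}| \le C r$. Integrating over $r \in [0, s_N]$ gives $|\tau^{N,i,j}_y| \le C s_N^2/2 = C/(2(N-1)^2)$. Taking the max over $y \in \Sigma$ and the sup over $(t,\bm{x})$ yields $\|\tau^{N,i,j}\| \le C/(N-1)^2$ with $C$ independent of $i,j,N,m_0,\bm{x}$. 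Continuity of $\tau^{N,i,j}$ in $(t,\bm{x})$ is automatic since it is an algebraic combination of $U$ and $D^m U$, both continuous, evaluated at points depending continuously (in fact, on the finite set $\Sigma^N$, trivially) on $\bm{x}$ and continuously on $t$.

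The main obstacle — really the only subtlety — is the boundary issue: the directional derivative $D^m U$ is defined only as a one-sided limit across admissible directions, so I must make sure the segment $\{m + r(\delta_y - \delta_{x_j}) : r \in [0, s_N]\}$ stays inside $P(\Sigma)$ for the fundamental-theorem-of-calculus argument. This is fine here: $m$ already has mass $\ge \frac{1}{N-1}$ on $x_j$ (since $x_j$ is one of the points defining $m$), so subtracting $r \le \frac{1}{N-1}$ from the $x_j$-coordinate keeps it nonnegative, and adding to the $y$-coordinate is harmless; thus the entire segment lies in the simplex and the one-sided derivative agrees with the genuine one along it. One should also remark — as the paper already does after the definition of $D^m U$ — that the relevant base measures stay in the interior in the limiting regime, but for this purely combinatorial estimate the elementary observation above suffices. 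With that in hand, the proof is just the Taylor expansion plus the uniform Lipschitz bound on $D^m U$, and nothing deeper is needed.
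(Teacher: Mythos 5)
Your proposal is correct and follows essentially the same route as the paper: write $[\Delta^j u^{N,i}]_y$ as the increment of $U$ along the segment from $m_{\bm{x}}^{N,i}$ to $m_{\bm{x}}^{N,i}+\frac{1}{N-1}(\delta_y-\delta_{x_j})$, apply the fundamental theorem of calculus in the direction $\delta_y-\delta_{x_j}$, and bound the remainder by the uniform Lipschitz constant of $D^m U$ in $m$ to get the $O((N-1)^{-2})$ rate. Your explicit check that the segment stays in the simplex (because $m_{\bm{x}}^{N,i}$ has mass at least $\frac{1}{N-1}$ at $x_j$) is a small point the paper leaves implicit, but it does not change the argument.
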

\begin{proof}
Observe first that $[\Delta^j u^{N,i}(t,\bm{x})]_{x_j}=0= [D^m U(t,x_i, m_{\bm{x}}^{N,i}, x_j)] _{x_j}$ by definition, so we set 
$[\tau^{N,i,j}(t,\bm{x})]_{x_j}=0$. Consider then $h\neq x_j$: 
 $[\Delta^j u^{N,i}(t,\bm{x})]_h = U(t,x_i, \frac{1}{N-1} \sum_{k \neq i,j} \delta_{x_k} + \frac{1}{N-1}\delta_h) - U(t,x_i, m_{\bm{x}}^{N,i})$ by definition.
By standard computations we get
\begin{align*}
U & \left(t,x_i, \frac{1}{N-1} \sum_{k \neq i,j} \delta_{x_k} + \frac{1}{N-1}\delta_h \right)  - U(t,x_i, m_{\bm{x}}^{N,i}) \\
& = U \left(t,x_i,  m_{\bm{x}}^{N,i} + \frac{1}{N-1}(\delta_h - \delta_{x_j}) \right)  - U(t,x_i, m_{\bm{x}}^{N,i})\\
& = \int_{0}^{\frac{1}{N-1}}\left[D^m U(m_{\bm{x}}^{N,i} + s(\delta_h - \delta_{x_j}), x_j)\right]_h ds\\
& = \int_{0}^{\frac{1}{N-1}}\left(\left[D^m U(m_{\bm{x}}^{N,i} + s(\delta_h - \delta_{x_j}), x_j)\right]_h + \left[D^m U(m_{\bm{x}}^{N,i}, x_j)\right]_h  - \left[D^m U(m_{\bm{x}}^{N,i}, x_j)\right]_h\right) ds \\
& = \frac{1}{N-1}\left[D^m U(m_{\bm{x}}^{N,i}, x_j)\right]_h \! + \! \int_{0}^{\frac{1}{N-1}}\left(\left[D^m U(m_{\bm{x}}^{N,i} + s(\delta_h - \delta_{x_j}), x_j)\right]_h - \left[D^m U(m_{\bm{x}}^{N,i}, x_j)\right]_h\right)\! ds\\
& = \frac{1}{N-1} \left[D^m U(t,x_i, m_{\bm{x}}^{N,i}, x_j)\right]_h + O\left(\frac{1}{(N-1)^2}\right),
\end{align*}
where the last equality is derived by exploiting the Lipschitz continuity in $m$ of $D^m U$
\begin{align*}
\Bigg|\int_{0}^{\frac{1}{N-1}} &  \left(\left[D^m U(m_{\bm{x}}^{N,i} + s(\delta_h - \delta_{x_j}), x_j)\right]_h - \left[D^m U(m_{\bm{x}}^{N,i}, x_j)\right]_h\right) ds\Bigg|\\
& \leq C \int_{0}^{\frac{1}{N-1}} \left| s (\delta_h - \delta_{x_j})\right| ds = O\left(\frac{1}{(N-1)^2}\right).
\end{align*}
For every component $h$ of $D^m U$ we have found the thesis, and thus the same holds for the whole vector.
\end{proof}

 In the next proposition we show that the $u^{N,i}$'s almost solve the system (\ref{eqn: HJB}):

\begin{prop}
\label{prop3.2}
Under the assumptions of Theorem \ref{fund}, the functions $(u^{N,i})_{i = 1,\dots, N}$ solve
\begin{equation}
\label{eqn:prox}
\begin{cases}
-\frac{\partial {u}^{N,i}}{\partial t}(t,\bm{x}) \!-\! \sum_{j=1, \ j\neq i}^N\! \alpha^*(x_j, \Delta^j u^{N,j}) \cdot \Delta^j u^{N,i} \!+\! H(x_i, \Delta^i u^{N,i}) = F^{N,i}(\bm{x}) \!+\! r^{N,i}(t, \bm{x}) \\
u^{N,i}(T, \bm{x}) = G(x_i, m_{\bm{x}}^{N,i}),
\end{cases}
\end{equation}
with $r^{N,i} \in C^0([0,T] \times \Sigma^N)$, $||r^{N,i}|| \leq \frac{C}{N}$.
\end{prop}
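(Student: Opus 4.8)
The plan is to substitute $u^{N,i}(t,\bm{x}) = U(t,x_i,m_{\bm{x}}^{N,i})$ into the left‑hand side of \eqref{eqn: HJB}, show that most terms are reproduced \emph{exactly}, and collect the few remainder terms into $r^{N,i}$, checking that each contributes at most order $1/N$. The only inputs needed are Proposition \ref{prop1}, the regularity encoded in Definition \ref{def} (boundedness and Lipschitz continuity of $\Delta^x U$ and $D^m U$ in $m$), the mean‑field assumptions \eqref{eqn:MF}, and the fact that $U$ solves \eqref{eqn:M}.

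First I would dispose of the terms with no error. Since $m_{\bm{x}}^{N,i}$ does not involve the $i$‑th coordinate, changing $x_i$ leaves it unchanged; hence $u^{N,i}(T,\bm{x}) = U(T,x_i,m_{\bm{x}}^{N,i}) = G(x_i,m_{\bm{x}}^{N,i})$, $\frac{\partial}{\partial t} u^{N,i}(t,\bm{x}) = \frac{\partial U}{\partial t}(t,x_i,m_{\bm{x}}^{N,i})$, and $\Delta^i u^{N,i}(t,\bm{x}) = \Delta^x U(t,x_i,m_{\bm{x}}^{N,i})$, so that $H(x_i,\Delta^i u^{N,i}) = H(x_i,\Delta^x U(t,x_i,m_{\bm{x}}^{N,i}))$ exactly (the argument lies in the compact set where \textbf{(H1)} applies, by Remark \ref{boundv} and Definition \ref{def}). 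Likewise, for $j\neq i$ the measure $m_{\bm{x}}^{N,j}$ does not involve $x_j$, so $\Delta^j u^{N,j}(t,\bm{x}) = \Delta^x U(t,x_j,m_{\bm{x}}^{N,j})$, again with no error.

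The remainders enter only through the interaction term. Using Proposition \ref{prop1} to write $\Delta^j u^{N,i} = \frac{1}{N-1} D^m U(t,x_i,m_{\bm{x}}^{N,i},x_j) + \tau^{N,i,j}$ for $j\neq i$, I would split
\begin{align*}
\sum_{j\neq i}\alpha^*(x_j,\Delta^j u^{N,j})\cdot\Delta^j u^{N,i}
&= \frac{1}{N-1}\sum_{j\neq i}\alpha^*\bigl(x_j,\Delta^x U(t,x_j,m_{\bm{x}}^{N,j})\bigr)\cdot D^m U(t,x_i,m_{\bm{x}}^{N,i},x_j)\\
&\quad + \sum_{j\neq i}\alpha^*\bigl(x_j,\Delta^x U(t,x_j,m_{\bm{x}}^{N,j})\bigr)\cdot\tau^{N,i,j}(t,\bm{x}).
\end{align*}
The last sum is $O(1/N)$, since $\alpha^*$ takes values in the compact $A$ and $\|\tau^{N,i,j}\|\le C/(N-1)^2$ while there are $N-1$ summands. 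In the first sum I would replace $m_{\bm{x}}^{N,j}$ by $m_{\bm{x}}^{N,i}$: because $m_{\bm{x}}^{N,j}-m_{\bm{x}}^{N,i} = \frac{1}{N-1}(\delta_{x_i}-\delta_{x_j})$ has norm $O(1/N)$ and $\alpha^*$, $\Delta^x U(t,\cdot,\cdot)$ are Lipschitz while $D^m U$ is bounded, this costs another $O(1/N)$. What is then left is \emph{exactly} $\int_\Sigma \alpha^*\bigl(y,\Delta^y U(t,y,m_{\bm{x}}^{N,i})\bigr)\cdot D^m U(t,x_i,m_{\bm{x}}^{N,i},y)\,dm_{\bm{x}}^{N,i}(y)$, the point being that $\frac{1}{N-1}\sum_{j\neq i} g(x_j) = \int_\Sigma g\,dm_{\bm{x}}^{N,i}$ is an identity, not an approximation.

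Putting the pieces together, the left‑hand side of the $u^{N,i}$‑equation equals $-\frac{\partial U}{\partial t} + H(x_i,\Delta^x U) - \int_\Sigma \alpha^*(y,\Delta^y U)\cdot D^m U\,dm$ evaluated at $(t,x_i,m_{\bm{x}}^{N,i})$, plus a term $r^{N,i}(t,\bm{x})$ collecting the $O(1/N)$ remainders, so $\|r^{N,i}\|\le C/N$; continuity of $r^{N,i}$ follows from that of $U$, $\frac{\partial U}{\partial t}$, $\Delta^x U$, $D^m U$, $\alpha^*$ and $H$. Since $U$ solves \eqref{eqn:M}, those three terms equal $F(x_i,m_{\bm{x}}^{N,i}) = F^{N,i}(\bm{x})$ by the mean‑field assumption, which together with the terminal condition already checked is precisely \eqref{eqn:prox}. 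The only genuinely delicate point is the bookkeeping — keeping $m_{\bm{x}}^{N,i}$ and $m_{\bm{x}}^{N,j}$ straight and verifying each substitution costs only $O(1/N)$ — which works because the constant in front of the interaction sum is $\frac{1}{N-1}$, making both the quadratic remainder of Proposition \ref{prop1} and the $m_{\bm{x}}^{N,j}\to m_{\bm{x}}^{N,i}$ discrepancy acceptable.
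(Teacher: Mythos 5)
Your proposal is correct and follows essentially the same route as the paper: both rest on Proposition \ref{prop1} to trade $\Delta^j u^{N,i}$ for $\frac{1}{N-1}D^m U$ up to an $O(1/(N-1)^2)$ remainder, and on the Lipschitz continuity of $\alpha^*$ and $\Delta^x U$ to absorb the $m_{\bm{x}}^{N,i}$ versus $m_{\bm{x}}^{N,j}$ discrepancy into an $O(1/N)$ term. The only difference is the direction of the algebra (you substitute into \eqref{eqn: HJB} and reduce to \eqref{eqn:M}, the paper starts from \eqref{eqn:M} and builds up to \eqref{eqn:prox}), which is purely presentational.
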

\begin{proof}
We know that $U$ solves
\begin{equation*}
-\partial_t U + H(x, \Delta^x U) - \int_{\Sigma} D^m U (t,x,m,y)\cdot \alpha^*(y,\Delta^y U(t,y,m)) dm(y) = F(x,m),
\end{equation*}
and $U(T,x,m) = G(x,m)$.

Computing the equation in $(t,x_i, m_{\bm{x}}^{N,i})$ we get (we omit the $*$ in $\alpha^*$ for simplicity)
\begin{align*}
-\partial_t U(t,x_i,m_{\bm{x}}^{N,i}) & + H(x_i, \Delta^x U(t,x_i,m_{\bm{x}}^{N,i})) \\
& - \int_{\Sigma} D^m U(t,x_i, m_{\bm{x}}^{N,i}, y) \cdot \alpha(y,\Delta^x U(t,y,  m_{\bm{x}}^{N,i})) d m_{\bm{x}}^{N,i}(y) = F(x_i, m_{\bm{x}}^{N,i}),
\end{align*}
with the correct final condition $u^{N,i}(t,\bm{x}) = U(T,x_i,  m_{\bm{x}}^{N,i}) = G(x_i,  m_{\bm{x}}^{N,i})$.
By definition of empirical measure we can rewrite 
\begin{align*}
-\partial_t U(t,x_i,m_{\bm{x}}^{N,i}) & + H(x_i, \Delta^x U(t,x_i,m_{\bm{x}}^{N,i})) \\
& - \frac{1}{N-1}\sum_{j=1, j \neq i}^N D^m U(t,x_i, m_{\bm{x}}^{N,i}, x_j) \cdot \alpha(x_j,\Delta^x U(t,x_j,  m_{\bm{x}}^{N,i})) = F^{N,i}(\bm{x}).
\end{align*}
Thanks to Proposition \ref{prop1}, we have
\begin{align*}
\frac{1}{N-1} & \sum_{j=1, j \neq i}^N D^m U(t,x_i, m_{\bm{x}}^{N,i}, x_j) \cdot \alpha(x_j,\Delta^x U(t,x_j,  m_{\bm{x}}^{N,i})) \\
& = \sum_{j=1, j \neq i}^N \Delta^j u^{N,i}(t, \bm{x}) \cdot \alpha(x_j,\Delta^x U(t,x_j,  m_{\bm{x}}^{N,i})) \\
& -\sum_{j=1, j \neq i}^N \tau^{N,i,j}(t,\bm{x}) \cdot \alpha(x_j,\Delta^x U(t,x_j,  m_{\bm{x}}^{N,i}))\\
& =: \ 1) \  + \ 2).
\end{align*}
For the first term we add and subtract the quantity $\alpha(x_j, \Delta^x U(t,x_j, m_{\bm{x}}^{N,j}))$:
\begin{align*}
1) & = \sum_{j \neq i} \Delta^j u^{N,i}(t,\bm{x}) \cdot \alpha(x_j, \Delta^x U(t,x_j, m_{\bm{x}}^{N,i})) - \alpha(x_j, \Delta^x U(t,x_j, m_{\bm{x}}^{N,j})) \\
& + \sum_{j \neq i} \Delta^j u^{N,i}(t,\bm{x}) \cdot \alpha(x_j, \Delta^x U(t,x_j, m_{\bm{x}}^{N,j})) \\
& = (A) + (B).
\end{align*}
For $(A)$ we have, using first the Lipschitz continuity of $\alpha$ with respect to the second variable and then the Lipschitz continuity of $\Delta^x U$ with respect to $m$:
\begin{align*}
(A) & \leq \sum_{j \neq i} \Delta^j u^{N,i}(t, \bm{x}) \cdot (\Delta^x U(t,x_j, m_{\bm{x}}^{N,i}) - \Delta^x U(t,x_j, m_{\bm{x}}^{N,j})) \\
& \leq C \sum_{j \neq i} ||\Delta^j u^{N,i}|| \cdot|m_{\bm{x}}^{N,i}- m_{\bm{x}}^{N,j}| \\
& \leq \frac{C}{N-1} \sum_{j \neq i} ||\Delta^j u^{N,i}||  \leq \frac{C}{N},
\end{align*}
where the last inequality is a consequence of \eqref{uni} and the uniform bound on $||D^m U||$ for the solution to (\ref{eqn:M}). Part $(B)$ of $1)$ is instead what we want to obtain in the equation for $u^{N,i}$, so we leave it as it is.

For the term $2)$, we simply note that $\alpha$ is bounded from above by definition, and thus the whole term $2)$ is also of order $O\left(\frac{1}{N}\right)$.
\end{proof}

The central part of the proof of the convergence is based on comparing the optimal trajectories associated to $v^{N,i}$ with the ones associated to $u^{N,i}$.
Hence, 
consider the processes
\begin{equation}
\label{eqn:12}
X_i(t)  = Z_i +\int_0^t \int_\Xi \sum_{y \in \Sigma} (y - X_i(s^-)) \mathbf{1}_{]0, \tilde{\alpha}_y^i(s,\bm{X}_{s^-})[}(\xi_y) \mathcal{N}_i(ds,d\xi), \ \ \  t \in [0, T]
\end{equation}
where $\tilde{\alpha}^i_y(t, \bm{X}_t) :=[\alpha^*(X_i(t), \Delta^i u^{N,i}(t, \bm{X}_t))]_y$.
Observe that the processes $\bm{X}$ and $\bm{Y}$ are exchangeable.
For future use, let us also recall the inequalities
\be
\left|m^N_{\bm{x}} - m^N_{\bm{y}}\right| \leq C \bm{d}_1(m^N_{\bm{x}}, m^N_{\bm{y}}) \leq \frac CN \sum_{i=1}^N |x_i - y_i|
\label{memp}
\ee
for every $\bm{x},\bm{y}\in\Sigma^N$, where the first inequality comes from the equivalence of all the metrics in $P(\Sigma)$ and the second is well-known for the Wasserstein distance $\bm{d}_1$. 
The result needed to prove the main theorems is the following

\begin{thm}
\label{crucial}
With the notation introduced above, under the assumptions of Theorem \ref{fund}, we have
\begin{align}
& \mathbb{E}\left[\sup_{t \in [0, T]} |Y_i(t) - X_i(t)| \right] \leq \frac CN,\label{eqn:1}\\
& \mathbb{E}\left[\sup_{t \in [0, T]} |m^N_{\bm{Y}}(t) - m^N_{\bm{X}}(t)| \right] \leq \frac CN,\label{lxy}\\
& \mathbb{E}\left[\sup_{t \in [0, T]} |u^{N,i}(t, \bm{Y}_t) - v^{N,i}(t, \bm{Y}_t)|^2 + \int_{0}^{T}\left| \Delta^i u^{N,i}(t, \bm{Y}_t) - \Delta^i v^{N,i}(t, \bm{Y}_t)\right|^2 dt\right] \leq \frac{C}{N^{2}},\label{eqn:2}\\
&  \frac{1}{N} \sum_{i=1}^N |v^{N,i}(0, \bm{Z}) - u^{N,i}(0, \bm{Z})|\leq \frac CN \ \ \ \mathbb{P} \text{-a.s}.\label{eqn:3}
\end{align}
\end{thm}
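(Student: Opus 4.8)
The plan is to prove the four estimates by a coupling argument, comparing the processes $\bm{Y}$ (driven by the true Nash feedback $\alpha^*(X_i,\Delta^i v^{N,i})$) with $\bm{X}$ (driven by the approximate feedback $\alpha^*(X_i,\Delta^i u^{N,i})$), both built from the same Poisson measures $\mathcal{N}_i$ and the same initial data $Z_i$. Estimate \eqref{eqn:1} should follow from a Gronwall-type argument: writing $Y_i(t)-X_i(t)$ as a stochastic integral against $\mathcal{N}_i$, the difference of the indicator jump terms $\mathbbm{1}_{]0,\alpha^i_y(s,\bm{Y}_{s^-})[}-\mathbbm{1}_{]0,\tilde\alpha^i_y(s,\bm{X}_{s^-})[}$ controls, in $L^1$ after using the intensity $\nu$, a term bounded by $|\alpha^i_y(s,\bm{Y}_{s^-})-\tilde\alpha^i_y(s,\bm{X}_{s^-})|$. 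By \textbf{(H1)} $\alpha^*$ is Lipschitz, so this splits into $C|\Delta^i v^{N,i}(s,\bm{Y}_{s^-})-\Delta^i u^{N,i}(s,\bm{Y}_{s^-})|$ plus a term $C|\Delta^i u^{N,i}(s,\bm{Y}_{s^-})-\Delta^i u^{N,i}(s,\bm{X}_{s^-})|$; the latter, via Lipschitz continuity of $\Delta^x U$ in $m$ and \eqref{memp}, is of order $\frac1N\sum_k|Y_k-X_k|$, and after summing over $i$, taking suprema and expectations, a discrete/integral Gronwall closes the bound at $C/N$. Estimate \eqref{lxy} is then immediate from \eqref{eqn:1} via \eqref{memp}, since $\mathbb{E}\sup_t|m^N_{\bm Y}(t)-m^N_{\bm X}(t)|\le \frac CN\sum_i\mathbb{E}\sup_t|Y_i(t)-X_i(t)|\le \frac CN$.

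The heart of the matter is \eqref{eqn:2}. Here I would apply Itô's formula (as in the Verification Theorem, Theorem II.5.1 in \cite{IK}) to $|u^{N,i}(t,\bm{Y}_t)-v^{N,i}(t,\bm{Y}_t)|^2$ along the trajectory $\bm{Y}$. The drift produced by the generator of $\bm{Y}$ acting on $u^{N,i}-v^{N,i}$ will, after using that $v^{N,i}$ solves \eqref{eqn: HJB} exactly and $u^{N,i}$ solves \eqref{eqn:prox} up to $r^{N,i}$ with $\|r^{N,i}\|\le C/N$, leave: (i) the remainder term $r^{N,i}$, contributing $O(1/N)$; (ii) the difference between the Hamiltonian terms $H(x_i,\Delta^i u^{N,i})-H(x_i,\Delta^i v^{N,i})$ and the jump operator of $\bm Y$ applied to $u^{N,i}$ in the $i$-th variable, which by the standard Hamiltonian-versus-optimal-control inequality (uniqueness of the maximizer $\alpha^*$, as used in the Verification Theorem proof) produces a favorable term $-c|\Delta^i u^{N,i}-\Delta^i v^{N,i}|^2$ — here uniform convexity of $H$ (or at least the one-sided Legendre inequality $H(p)-H(q)\ge -D_pH(q)\cdot(p-q)$ combined with $\alpha^*=-D_pH$) is what generates the quadratic gap that appears in the integral on the left of \eqref{eqn:2}; (iii) the cross terms from the $j\neq i$ sum, where $\bm Y$ uses feedback $\alpha^*(x_j,\Delta^j v^{N,j})$ while $u^{N,i}$'s equation carries $\alpha(x_j,\Delta^x U(t,x_j,m^{N,j}_{\bm x}))$ — these are matched up to $\Delta^j u^{N,j}$ vs. $\Delta^j v^{N,j}$ differences (controlled by the same quadratic quantity for index $j$, summed and absorbed) plus $O(1/N)$ errors from Proposition \ref{prop1}. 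Taking expectations, summing over $i$, and using the exchangeability of $\bm Y$ (so all indices contribute equally), the quadratic terms on the right get absorbed into the one on the left, leaving $\mathbb{E}[\sup_t|u^{N,i}(t,\bm Y_t)-v^{N,i}(t,\bm Y_t)|^2]+\mathbb{E}\int_0^T|\Delta^i u^{N,i}-\Delta^i v^{N,i}|^2\le C/N^2$; note the terminal term vanishes since $u^{N,i}(T,\cdot)=v^{N,i}(T,\cdot)=G(x_i,m^{N,i}_{\bm x})$.

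Finally, \eqref{eqn:3} follows by evaluating the identity from Itô's formula at $t=0$: since $v^{N,i}$ is the value function, $v^{N,i}(0,\bm Z)=J_i^N(0,\bm Z,\boldsymbol{\alpha}^{N*})$, and running the same computation backwards along $\bm X$ for $v^{N,i}$ versus $u^{N,i}$, or more directly using \eqref{eqn:2} at $t=0$ combined with the pointwise (non-averaged) structure; one gets $\frac1N\sum_i|v^{N,i}(0,\bm Z)-u^{N,i}(0,\bm Z)|\le\frac1N\sum_i\mathbb{E}_{\mathcal{F}_0}[\ldots]$, and the $\mathbb{P}$-a.s. bound of order $C/N$ comes from the fact that the only genuinely random ingredient is $\bm Z$, which enters through bounded, uniformly-$O(1/N)$ error terms $r^{N,i}$ and $\tau^{N,i,j}$. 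The main obstacle is the bookkeeping in step (iii) of \eqref{eqn:2}: correctly identifying which terms are genuine $O(1/N)$ remainders (from Propositions \ref{prop1} and \ref{prop3.2}), which are quadratic gaps to be absorbed after summing over the symmetric index, and ensuring the Gronwall/absorption argument closes with the right power $N^{-2}$ rather than $N^{-1}$; the convexity hypothesis on $H$ — even though \textbf{(RegH)}'s \eqref{bound} is not assumed in Theorem \ref{fund} — must be replaced by the bare Legendre-duality inequality that holds under \textbf{(H1)} alone, and checking that this still yields a usable (possibly degenerate) quadratic term is the delicate point.
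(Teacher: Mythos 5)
Your overall architecture matches the paper's: It\^o's formula applied to $(u^{N,i}(t,\bm{Y}_t)-v^{N,i}(t,\bm{Y}_t))^2$ along the optimal trajectory, use of Propositions \ref{prop1} and \ref{prop3.2} for the $O(1/N)$ remainders, absorption of the $j\neq i$ quadratic terms by exchangeability, and a coupling-plus-Gronwall argument for the trajectories. You also correctly identify \eqref{eqn:2} as the heart of the proof. However, there is one genuine gap, precisely at the point you yourself flag as "delicate": the origin of the coercive term $\kappa\,\mathbb{E}\int_0^T|\Delta^i(u^i_s-v^i_s)|^2\,ds$ on the left-hand side of \eqref{eqn:2}. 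You propose to extract it from convexity of $H$ (or a one-sided Legendre inequality), but under \textbf{(H1)} alone $H$ need not be convex, and even the Legendre inequality $H(p)-H(q)+D_pH(q)\cdot(p-q)\geq 0$ only gives a sign, not a strictly negative multiple of $|\Delta^i u^i-\Delta^i v^i|^2$. The paper's mechanism is entirely different and does not involve $H$ at all: it is the algebraic identity $\Delta^j[(u^i-v^i)^2]=\Delta^j(u^i-v^i)\times\bigl(\Delta^j(u^i-v^i)+2\,\mathbf{1}(u^i-v^i)\bigr)$, i.e. the carr\'e du champ of the jump process. The cross term $2(u^i-v^i)\Delta^j(u^i-v^i)$ cancels exactly against the drift contribution $-2\alpha^j\cdot(\Delta^j u^i-\Delta^j v^i)(u^i-v^i)$ coming from the two HJB systems, leaving $\sum_j\alpha^j\cdot(\Delta^j(u^i-v^i))^2$; coercivity then comes from the lower bound $\alpha^j\geq\kappa>0$ on the control set $A=[\kappa,M]^d$, not from any property of $H$. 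The Hamiltonian difference $H(Y_{i,s},\Delta^i u^i_s)-H(Y_{i,s},\Delta^i v^i_s)$ is merely Lipschitz-estimated and absorbed via $AB\leq\epsilon A^2+B^2/(4\epsilon)$. Without this identity your scheme does not close under the stated hypotheses.

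Two smaller points. First, your presentation obscures a logical dependency: in your Gronwall argument for \eqref{eqn:1} the term $C|\Delta^i v^{N,i}(s,\bm{Y}_{s})-\Delta^i u^{N,i}(s,\bm{Y}_{s})|$ is left unhandled; it is controlled (after Cauchy--Schwarz in time) exactly by the integral term in \eqref{eqn:2}, so \eqref{eqn:2} must be proved first and \eqref{eqn:1} derived from it, not the other way around. Second, \eqref{eqn:3} is an almost-sure statement, and the unconditional bound \eqref{eqn:2} cannot yield it directly; the paper runs the whole It\^o/Gronwall computation under the conditional expectation $\mathbb{E}^{\bm{Z}}=\mathbb{E}[\,\cdot\,|\bm{Y}_0=\bm{Z}]$ (legitimate because $\kappa>0$ makes the conditioning event of positive probability), obtains $\frac1N\sum_i\mathbb{E}^{\bm{Z}}|u^{N,i}(0,\bm{Z})-v^{N,i}(0,\bm{Z})|^2\leq C/N^2$ pointwise in $\bm{Z}$, and since the integrand at $t=0$ is $\mathcal{F}_0$-measurable this gives the a.s. bound by Cauchy--Schwarz. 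Your remark that "the only genuinely random ingredient is $\bm{Z}$" gestures at this but does not supply the conditioning step that makes it rigorous.
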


\begin{proof}
In order to prove (\ref{eqn:2}), we apply  It\^o Formula to the function $\Psi(t, \bm{Y}_t) = (u^{N,i}(t, \bm{Y}_t) - v^{N,i}(t, \bm{Y}_t))^2$,
\begin{equation*}
d\Psi(t, \bm{Y}_t) =   \frac{\partial \Psi(t, \bm{Y}_t)}{\partial t}+ \sum_{j= 1}^N \int_\Xi [\Psi(t, \tilde{\bm{Y}}_{t^-}^j) - \Psi(t,\bm{Y}_{t-})] 
\mathcal{N}_j(dt,d\xi), 
\end{equation*}
\begin{equation*}
\tilde{\bm{Y}}_t^j = \left(Y_{1,t}, \dots, Y_{j-1,t}, Y_{j,t} + \sum_{y \in \Sigma}(y- Y_{j,t}) \mathbbm{1}_{]0, \alpha_y^j[}(\xi_y), 
Y_{j+1,t}, \dots, Y_{N,t}\right),
\end{equation*} 
and, as above,
\begin{equation*}
\alpha^j_y(t,\bm{Y}_t) = \left[\alpha^*(Y_{j,t}, \Delta^j v^{N,j}(t,\bm{Y}_t))\right]_y. 
\end{equation*}
It follows that
\begin{align*}
d\Psi(t,\bm{Y}_t) & = 2(u^{N,i}(t,\bm{Y}_t) - v^{N,i}(t,\bm{Y}_t)) (\partial_t u^{N,i} - \partial_t v^{N,i}) \\
& + \sum_{j=1}^N \int_\Xi [(u^{N,i}(t, \tilde{\bm{Y}}_{t^-}^j) - v^{N,i}(t, \tilde{\bm{Y}}_{t^-}^j))^2 - (u^{N,i}(t,\bm{Y}_{t^-}) - v^{N,i}(t, \bm{Y}_{t^-}))^2]\mathcal{N}_j(dt,d\xi).
\end{align*}
Now, integrating on the time interval $[t, T]$ we get:
\begin{align*}
[u&^{N,i}(T, \bm{Y}_T)  - v^{N,i}(T, \bm{Y}_T)]^2 =\\
& = [u^{N,i}(t, \bm{Y}_t) - v^{N,i}(t, \bm{Y}_t)]^2 + 2 \int_{t}^T (u^{N,i}(s, \bm{Y}_s) - v^{N,i}(s, \bm{Y}_s)) (\partial_t u^{N,i}(s, \bm{Y}_s) - \partial_t v^{N,i}(s, \bm{Y}_s))ds \\
& + \sum_{j=1}^N \int_{t}^T  \int_\Xi [(u^{N,i}(s, \tilde{\bm{Y}}_{s^-}^j) - v^{N,i}(s, \tilde{\bm{Y}}_{s^-}^j))^2 - (u^{N,i}(s,\bm{Y}_{s^-}) - v^{N,i}(s, \bm{Y}_{s^-}))^2]\mathcal{N}_j(ds,d\xi). \\
\end{align*}
For brevity, in the remaining part of the proof we set $u^i_t := u^{N,i}(t,\bm{Y}_t)$ and $v^i_t := v^{N,i}(t,\bm{Y}_t)$. 
Next, we take the conditional expectation on the initial data $\bm{Z}$, i.e. $\mathbb{E}^{\bm{Z}} = \mathbb{E}[\ \cdot \ |\bm{Y}_t=\bm{Z}]$;
notice that we are allowed to condition on such event since it has positive probability, thanks to the bound from below on the jump rates.
Applying again Lemma 3 of \cite{alekos}, we obtain
\begin{align*}
\mathbb{E}^{\bm{Z}}[(u^i_T  - v^i_T)^2] 
 &= \mathbb{E}^{\bm{Z}}[(u^i_t - v^i_t)^2] + 2 \mathbb{E}^{\bm{Z}}\left[ \int_{t}^T(u^i_s - v^i_s)(\partial_t u^i_s - \partial_t v^i_s)ds\right] \\
& + \sum_{j=1}^N \mathbb{E}^{\bm{Z}}\left[\int_t^{T}  \!\!\!\! \ \alpha^j(s,\bm{Y}_s) \cdot \Delta^{j}[(u^i_s - v^i_s)^2]ds\right].
\end{align*}
Let us first study the term $\mathbb{E}^{\bm{Z}}\left[ \int_{t}^T (u^i_s - v^i_s)(\partial_t u^i_s - \partial_t v^i_s)ds\right]$. Applying equations (\ref{eqn:prox}) and (\ref{eqn: HJB}) we get
\begin{align*}
 \mathbb{E}^{\bm{Z}} & \left[ \int_{t}^T(u^i_s - v^i_s)(\partial_t u^i_s - \partial_t v^i_s)ds\right] \\
 & = \mathbb{E}^{\bm{Z}}\Bigg[\int_{t}^T (u^i_s - v^i_s)\Bigg\{\sum_{j=1, j \neq i}^N \left(-\alpha^j(Y_{j,s}, \Delta^j u^j_s) \cdot \Delta^j u^i_s + \alpha^j(Y_{j,s},\Delta^j v^j_s) \cdot \Delta^j v^i_s + \alpha^j \cdot \Delta^j u^i_s \right.\\
 & \left. - \alpha^j \cdot \Delta^j u^i_s\right) - H(Y_{i,s}, \Delta^i v^i_s) + H(Y_{i,s}, \Delta^i u^i_s) - r^{N,i}(s, \bm{Y}_s)\Bigg\}ds\Bigg].
\end{align*}
Recall that $\alpha^j(\Delta^j u^j_s) =: \tilde{\alpha}^j$. Note that we also added and subtracted $ \alpha^j \cdot \Delta^j u^i_s$ in the last line so that we can use the Lipschitz properties of $H$, $\alpha^*$ and the bound on $r^{N,i}$ to get the correct estimates. 
Specifically, we can rewrite 
\begin{align*}
 \mathbb{E}^{\bm{Z}} & \left[ \int_{t}^T(u^i_s -v^i_s)(\partial_t u^i_s - \partial_t v^i_s)ds\right] \\
 & = \mathbb{E}^{\bm{Z}}\Bigg[\int_{t}^T (u^i_s - v^i_s)\Bigg\{\sum_{j=1, j \neq i}^N \left( (\alpha^j - \tilde{\alpha}^j) \cdot \Delta^j u^i_s - \alpha^j \cdot (\Delta^j u^i_s - \Delta^j v^i_s)\right) \\
 &- H(Y_{i,s}, \Delta^i v^i_s) + H(Y_{i,s}, \Delta^i u^i_s) - r^{N,i}(s, \bm{Y}_s)\Bigg\}ds\Bigg].
\end{align*}
Putting things together,
\begin{align*}
&\mathbb{E}^{\bm{Z}}[(u^i_T  - v^i_T)^2] \\
& = \mathbb{E}^{\bm{Z}}[ (u^i_t - v^i_t)^2] \! + 2 \mathbb{E}^{\bm{Z}}\!\! \left[ \int_{t}^T \! (u^i_s - v^i_s)(\partial_t u^i_s - \partial_t v^i_s)ds\right] \! + \!  \sum_{j=1}^N \mathbb{E}^{\bm{Z}}\!\! \left[\int_t^{T}  \!\!\!\!  \ \alpha^j(s,\bm{Y}_s) \! \cdot \! \Delta^{j}[(u^i_s - v^i_s)^2]ds\right] \\
& = \mathbb{E}^{\bm{Z}}[(u^i_t - v^i_t)^2] + 2  \mathbb{E}^{\bm{Z}}\Bigg[\int_{t}^T  (u^i_s - v^i_s)\Bigg\{\sum_{j \neq i}^N \left( (\alpha^j - \tilde{\alpha}^j) \cdot \Delta^j u^i_s - \alpha^j \cdot (\Delta^j u^i_s - \Delta^j v^i_s)\right)\\
 & - H(Y_{i,s}, \Delta^i v^i_s) + H(Y_{i,s}, \Delta^i u^i_s) - r^{N,i}(s, \bm{Y}_s)\Bigg\}ds\Bigg]+ \mathbb{E}^{\bm{Z}}\left[\int_t^{T}  \!\!\!\!  \ \alpha^i(s,\bm{Y}_s) \cdot \Delta^{i}[(u^i_s - v^i_s)^2]ds\right] \\
 & + \sum_{j \neq i}^N \mathbb{E}^{\bm{Z}}\left[\int_t^{T}  \!\!\!\! \ \alpha^j(s,\bm{Y}_s) \cdot \Delta^{j}[(u^i_s - v^i_s)^2]ds\right] \\
 & = \mathbb{E}^{\bm{Z}}[(u^i_t - v^i_t)^2] + 2  \mathbb{E}^{\bm{Z}}\left[\int_{t}^T  (u^i_s - v^i_s)\left\{\sum_{j \neq i}^N \left( (\alpha^j - \tilde{\alpha}^j) \cdot \Delta^j u^i_s - \alpha^j \cdot (\Delta^j u^i_s - \Delta^j v^i_s)\right)\right\} ds\right.\\
 & \left. + \int_t^{T}\sum_{j \neq i}^N \frac{1}{2}\alpha^j \cdot \Delta^{j}[(u^i_s - v^i_s)^2]ds  + \int_t^{T}\!\!(u^i_s - v^i_s)(- H(Y_{i,s}, \Delta^i v^i_s) + H(Y_{i,s}, \Delta^i u^i_s) - r^{N,i}(s, \bm{Y}_s))ds\right]\\
 &+ \mathbb{E}^{\bm{Z}}\left[\int_t^{T}  \!\!\!\!  \ \alpha^i(s,\bm{Y}_s) \cdot \Delta^{i}[(u^i_s - v^i_s)^2]ds\right]. \\
\end{align*}
On the other hand, observing that $\Delta^j [(u^i - v^i)^2] = \Delta^j (u^i-v^i) \times (\Delta^j(u^i-v^i) + 2 (\mathbf{1}(u^i-v^i)))$, $\times$ being the element by element product between vectors and $\mathbf{1} = (1, \dots, 1)^\dagger$, the expression
\begin{equation*}
\mathbb{E}^{\bm{Z}}\left[\int_{t}^T (u^i_s - v^i_s)\left\{\sum_{j=1, j \neq i}^N \left( - 2\alpha^j \cdot (\Delta^j u^i_s - \Delta^j v^i_s)\right)\right\} ds + \int_t^{T}\!\!\!\sum_{j=1, j \neq i}^N\!\! \left(\alpha^j \cdot \Delta^{j}[(u^i_s - v^i_s)^2]\right)ds \right]
\end{equation*}
can be simplified as follows
\begin{align*}
\mathbb{E}^{\bm{Z}}& \left[\int_{t}^T  (u^i_s - v^i_s)\left\{\sum_{j=1, j \neq i}^N \left( - 2\alpha^j \cdot (\Delta^j u^i_s - \Delta^j v^i_s)\right))\right\}ds + \int_t^{T}\sum_{j=1, j \neq i}^N \left(\alpha^j \cdot \Delta^{j}[(u^i_s - v^i_s)^2]\right)ds \right]\\
& = \sum_{j=1, j \neq i}^N \!\!\! \mathbb{E}^{\bm{Z}}\Bigg[\int_{t}^T\left\{ - 2\alpha^j \cdot (u^i_s - v^i_s)(\Delta^j u^i_s - \Delta^j v^i_s) \right .\\
& \left.+  \alpha^j \cdot (\Delta^j (u^i_s-v^i_s) \times (\Delta^j(u^i_s-v^i_s) + 2 (\mathbf{1}(u^i_s-v^i_s)))\right\}ds\Bigg]\\
& = \sum_{j=1, j \neq i}^N \mathbb{E}^{\bm{Z}}\left[\int_{t}^T\alpha^j \cdot (\Delta^j(u^i_s-v^i_s))^2 ds \right].
\end{align*}

Thus, we have found
\begin{align*}
0 = &\mathbb{E}^{\bm{Z}}[(u^i_T  - v^i_T)^2] \\
& = \mathbb{E}^{\bm{Z}}[(u^i_t - v^i_t)^2]  + 2  \mathbb{E}^{\bm{Z}}\Bigg[\int_{t}^T (u^i_s - v^i_s)\Bigg\{\sum_{j=1, j \neq i}^N \left( (\alpha^j - \tilde{\alpha}^j) \cdot \Delta^j u^i_s\right) \\
&- H(Y_{i,s}, \Delta^i v^i_s) + H(Y_{i,s}, \Delta^i u^i_s) - r^{N,i}(s, \bm{Y}_s)\Bigg\}ds\Bigg] + \mathbb{E}^{\bm{Z}}\left[\int_t^{T}  \!\!\!\!  \ \alpha^i(s,\bm{Y}_s) \cdot \Delta^{i}[(u^i_s - v^i_s)^2]ds\right]\\
& +  \sum_{j=1, j \neq i}^N \mathbb{E}^{\bm{Z}}\left[\int_{t}^T\alpha^j \cdot (\Delta^j(u^i_s-v^i_s))^2 ds \right].
\end{align*}
Now, using again the expression for $\Delta^i ((u^i_s - v^i_s)^2)$,
\begin{align*}
 \mathbb{E}^{\bm{Z}}&\left[\int_t^{T}  \!\!\!\!  \ \alpha^i(s,\bm{Y}_s) \cdot \Delta^{i}[(u^i_s - v^i_s)^2]ds\right] \\
 & = \mathbb{E}^{\bm{Z}}\left[\int_t^{T} \!\!\!\!  \ \alpha^i(s,\bm{Y}_s) \cdot (\Delta^i(u^i_s  - v^i_s ))^2ds\right]
  +  \mathbb{E}^{\bm{Z}}\left[\int_t^{T} \!\!\!\!  \ \alpha^i(s,\bm{Y}_s) \cdot (\Delta^i(u^i_s  -v^i_s ) \times 2 (\mathbf{1}(u^i_s  - v^i_s )ds\right],
\end{align*}
so that we can rewrite the previous as 
\begin{align*}
\mathbb{E}&^{\bm{Z}} [(u^i_t  - u^i_t)^2] + \sum_{j=1}^N \mathbb{E}^{\bm{Z}}\left[\int_{t}^T\alpha^j \cdot (\Delta^j(u^i_s -v^i_s))^2 ds \right]\\
& = - 2\mathbb{E}^{\bm{Z}}\!\!\!\left[\int_{t}^T \!\!\! (u^i_s  - v^i_s )\! \left\{\!\! \sum_{j=1, j \neq i}^N \!\!\!\!\! \left( (\alpha^j - \tilde{\alpha}^j) \cdot \Delta^j u^i_s \right)\!- \! H(Y_{i,s}, \Delta^i v^i_s) \! +\! H(Y_{i,s}, \Delta^i u^i_s)\! - \! r^{N,i}(s, \bm{Y}_s)\right\}ds\right] \\
& - \mathbb{E}^{\bm{Z}}\left[\int_t^{T} \!\!\!\!  \ \alpha^i(s,\bm{Y}_s) \cdot (\Delta^i(u^i_s - v^i_s) \times 2 (\mathbf{1}(u^i_s - v^i_s)))ds\right].
\end{align*}
Recalling that $\alpha^j \geq 0$ (since it is a vector of transition rates), we can estimate 
\begin{align*}
\mathbb{E}&^{\bm{Z}} [(u^i_t - v^i_t)^2] + \sum_{j=1}^N \mathbb{E}^{\bm{Z}}\!\!\left[\int_{t}^T\!\!\! \alpha^j \cdot (\Delta^j(u^i_s-v^i_s))^2 ds \right]\\
& \leq 2\mathbb{E}^{\bm{Z}}\!\!\! \left[\int_{t}^T \!\!\!\!\!\!  \left|u^i_s \! -  \! v^i_s\right|\left\{\sum_{j \neq i}^N \! 
\left|(\alpha^j - \tilde{\alpha}^j) \cdot \Delta^j u^i_s \right|  \! + \! \left|H(Y_{i,s}, \Delta^i v^i_s) \! -  \! H(Y_{i,s}, \Delta^i u^i_s) \right| \! + \! \left|r^{N,i}(s, \bm{Y}_s)\right|\right\}ds\right] \\
& + 2\mathbb{E}^{\bm{Z}}\!\!\left[\int_t^{T} \!\!\!\! \ \left|u^i_s - v^i_s\right| \cdot\left|\alpha^i(s,\bm{Y}_s) \cdot \Delta^i(u^i_s - v^i_s) \right|ds\right].
\end{align*}
This also implies, erasing the terms with $j \neq i$ in the left hand side,
\begin{align*}
\mathbb{E}&^{\bm{Z}} [(u^i_t - v^i_t)^2] + \mathbb{E}^{\bm{Z}}\!\!\left[\int_{t}^T \!\! \alpha^i \cdot (\Delta^i(u^i_t - v^i_t))^2 ds \right]\\
& \leq 2\mathbb{E}^{\bm{Z}}\!\!\! \left[\int_{t}^T\!\!\!\!\!  \! \left|u^i_s \! - \! v^i_s\right| \! \left\{\sum_{j \neq i}^N 
 \left|(\alpha^j - \tilde{\alpha}^j) \cdot \Delta^j u^i_s \right|  \! + \! \left|H(Y_{i,s}, \Delta^i v^i_s) \! - \! H(Y_{i,s}, \Delta^i u^i_s) \right| \! + \! \left|r^{N,i}(s, \bm{Y}_s)\right|\right\}ds\right] \\
& + 2\mathbb{E}^{\bm{Z}}\!\!\left[\int_t^{T} \!\!\!\!  \ \!\! \left|u^i_s - v^i_s\right| \left|\alpha^i(s,\bm{Y}_s) \cdot \Delta^i(u^i_s - v^i_s) \right|ds\right].
\end{align*}
For the boundedness of $\alpha^i$ from below and above (recall that the admissible controls $\alpha$ are such that $\alpha \in A = [\kappa, M]^d$), we get  
\begin{align*}
\mathbb{E}&^{\bm{Z}} [(u^i_t - v^i_t)^2] + \kappa \mathbb{E}^{\bm{Z}}\!\!\left[\int_{t}^T \left|\Delta^i(u^i_s - v^i_s)\right|^2 ds\right] \\
& \leq 2\mathbb{E}^{\bm{Z}}\!\!\left[\int_{t}^T \!\!\! \! \left|u^i_s \! -\! v^i_s\right|\left\{\!\! \sum_{j \neq i}^N \left|(\alpha^j \!- \! \tilde{\alpha}^j) \cdot \Delta^j u^i_s \right|  \! + \! \left| H(Y_{i,s}, \Delta^i v^i_s) \! - \! H(Y_{i,s}, \Delta^i u^i_s) \right|  \! + \! \left|r^{N,i}(s, \bm{Y}_s)\right|\right\}ds\right] \\
& + 2 C\mathbb{E}^{\bm{Z}}\!\! \left[\int_t^{T} \!\!\!\!  \! \left|u^i_s - v^i_s\right| \left|\Delta^i(u^i_s - v^i_s) \right|ds\right].
\end{align*}
We now use the Lipschitz continuity of $H$ and $\alpha^*$ (assumption (\textbf{H1})) and the bounds on $||r^{N,i}|| \leq \frac{C}{N}$ and $||\Delta^j u^i||\leq \frac{1}{N} ||D^m U|| \leq \frac{C}{N}$ proved in Propositions \ref{prop1} and \ref{prop3.2} to obtain
\begin{align*}
\mathbb{E}&^{\bm{Z}} [(u^i_t - v^i_t)^2] + \kappa\mathbb{E}^{\bm{Z}}\left[\int_{t}^T \left|\Delta^i(u^i_s - v^i_s)\right|^2 ds\right] \\
& \leq 2\mathbb{E}^{\bm{Z}}\left[\int_{t}^T \left|u^i_s - v^i_s\right|\left\{\frac{C}{N} \sum_{j=1, j \neq i}^N \left|\Delta^j u^j_s - \Delta^j v^j_s \right| + C \left|\Delta^i (v^i_s -  u^i_s)\right|+ \frac{C}{N}\right\}ds\right] \\
& + 2C \mathbb{E}^{\bm{Z}} \left[\int_t^{T} \!\!\!\!  \ \left|u^i_s - v^i_s\right| \left|\Delta^i(u^i_s - v^i_s) \right|ds\right]\\
& \leq \frac{C}{N} \mathbb{E}^{\bm{Z}}\!\left[\int_t^{T}\!\! |u^i_s \! - \! v^i_s|ds\right] \!+\! \frac{C}{N} \sum_{j \neq i}\mathbb{E}^{\bm{Z}}\!\left[\int_t^{T}\!\!|u_s^i \! - \! v_s^i| \left|\Delta^j(u^j_s \! - \! v^j_s)\right|ds \right]\! \\
& +\! C \mathbb{E}^{\bm{Z}}\!\left[\int_t^{T}\!\!\! |u^i_s \!-\! v^i_s| \left|\Delta^i(u^i_s \!-\! v^i_s)\right|ds\right]\!\!.
\end{align*}

By the convexity inequality $AB \leq \epsilon A^2 + \frac{B^2}{4\epsilon}$ we can further estimate the right hand side to get
\begin{align*}
\mathbb{E}&^{\bm{Z}} [(u^i_t - v^i_t)^2] + \kappa\mathbb{E}^{\bm{Z}}\left[\int_{t}^T \left|\Delta^i(u^i_s - v^i_s)\right|^2 ds\right] \\
& \leq \frac{C}{N^2} + C\mathbb{E}^{\bm{Z}}\left[\int_t^{T} |u^i_s - v^i_s|^2 ds \right] + \kappa\frac{1}{2N}\sum_{j =1}^N\mathbb{E}^{\bm{Z}}\left[\int_t^{T} \left|\Delta^j(u^j_s - v^j_s)\right|^2 ds \right].
\end{align*}
By Gronwall's Lemma, we obtain
\begin{equation}
\label{eqn:crucial}
\sup_{t \in [0,T]}\mathbb{E}^{\bm{Z}}\! [(u^i_t \!-\! v^i_t)^2] \!+\! \kappa\mathbb{E}^{\bm{Z}}\!\!\left[\int_{0}^T \left|\Delta^i(u^i_s \!-\! v^i_s)\right|^2 ds \right]\leq \frac{C}{N^2} \!+\frac{\kappa}{2N}\sum_{j =1}^N\mathbb{E}^{\bm{Z}}\!\!\left[\int_0^{T} \left|\Delta^j(u^j_s \!-\! v^j_s)\right|^2 ds \right].
\end{equation}
Taking the expectation and using the exchangeability of the processes $(Y_{j,t})_{j = 1, \dots N}$ we obtain (\ref{eqn:2}).

In order to derive (\ref{eqn:3}), we consider (\ref{eqn:crucial}) in $t= 0$ and average over $i = 1, \dots, N$, so that we get
\begin{equation*}
\frac{1}{N} \sum_{i=1}^N \mathbb{E}^{\bm{Z}}|u^{N,i}(0, \bm{Z}) - v^{N,i}(0, \bm{Z})|^2 \leq \frac{C}{N^2}, 
\end{equation*} 
which immediately implies (\ref{eqn:3}) almost surely.

We now estimate the difference $X_i - Y_i$. Thanks to equations (\ref{eqn:12}) and (\ref{eqn:13})  and the Lipschitz continuity in $x$ and $\alpha$ of the dynamics given by $f$ (see Lemma 2 in \cite{alekos}), we obtain
\begin{align}
\mathbb{E}\!\! &\left[\!\sup_{s \in [0,t]} \! |X_{i,s} - Y_{i,s}|\right]\nonumber\\ 
&\! \leq \!\! C \mathbb{E} \! \left[\int_{0}^t\!\!\! \left|\alpha^* (X_{i,s}, \Delta^i u^{N,i}(\bm{X}_s)) \! -\! \alpha^*(Y_{i,s}, \Delta^i v^{N,i}(\bm{Y}_s))\right|ds\right]\!\! + \! C \mathbb{E}\! \left[\int_{0}^t \!\!\! \left|X_{i,s} \!-\! Y_{i,s}\right|ds\right]\nonumber\\
& \leq C \mathbb{E}\left[\int_0^t \!\! |X_{i,s} - Y_{i,s}|ds\right] 
+ C\mathbb{E}\left[\int_{0}^T \!\! \left|\Delta^i u^{N,i}(\bm{Y}_s) - \Delta^i v^{N,i}(\bm{Y}_s)\right|ds\right]\nonumber\\
& + C\mathbb{E}\left[\int_{0}^t \!\! \left|\Delta^x U\left(s, X_{i,s}, m^{N,i}_{\bm{X}_s}\right) -\Delta^x U\left(s, Y_{i,s}, m^{N,i}_{\bm{Y}_s}\right)\right|ds\right]\nonumber\\
& \leq \frac CN + C \mathbb{E}\left[\int_0^t  \sup_{r \in [0,s]}|X_{i,r} - Y_{i,r}|ds\right] 
+ C\mathbb{E}\left[\int_0^t \sup_{r \in [0,s]} \left|m^{N}_{\bm{X}_r} - m^{N}_{\bm{Y}_r}\right|ds\right]\!\!,
\label{17}
\end{align} 
where we applied (\ref{eqn:2}) and the Lipschitz continuity in $m$ of $\Delta^x U$ in the last inequality.
Then inequality \eqref{memp} and the exchangeability of $(\bm{X},\bm{Y})$ yield
$$\mathbb{E}\!\! \left[\!\sup_{s \in [0,t]}|X_{i,s} - Y_{i,s}|ds\right]  \leq \frac CN + C \mathbb{E}\left[\int_0^t  \sup_{r \in [0,s]}|X_{i,r} - Y_{i,r}|ds\right]$$
and by Gronwall's inequality we get \eqref{eqn:1}.
Finally \eqref{eqn:1}, applying again \eqref{memp}, gives \eqref{lxy}.
\end{proof}


\subsection{Proofs of the main results}
We are now in the position to prove the main results.

\begin{proof}[Proof of Theorem~\ref{fund}]

For proving \eqref{4}, we just compute (\ref{eqn:3}) - which can be derived for any $t_0 \in [0,T]$ when considering processes starting from $t_0$ - for $\bm{Z}$ uniformly distributed on $\Sigma$: this yields
\begin{equation*}
\frac{1}{N} \sum_{i=1}^N |U(t_0,x_i, m_{\bm{x}}^{N,i}) - v^{N,i}(t_0, \bm{x})|\leq \frac CN.
\end{equation*}
Then, we can replace $U(t_0, x_i, m_{\bm{x}}^{N,i})$ with $U(t_0, x_i, m_{\bm{x}}^{N})$ using the Lipschitz continuity of $U$ with respect to $m$, the additional error term being of order $1/N$.


For \eqref{5}, we compute
\begin{align}
||w^{N,i}&(t_0, \cdot, m_0) - U(t_0, \cdot, m_0)||_{L^1(m_0)} = \nonumber\\ 
& = \sum_{x_i=1}^d |w^{N,i}(t_0, x_i, m_0) - U(t_0, x_i, m_0)| m_0(x_i) \nonumber \\
& = \sum_{x_i=1}^d \left| \sum_{x_1,\dots,x_{i-1},x_{i+1},\dots, x_N=1}^d v^{N,i}(t,\bm{x}) \prod_{j\neq i} m_0(x_j) - U(t,x_i,m_0)\right| m_0(x_i) \nonumber \\
& = \sum_{x_i=1}^d \left|\sum_{x_1,\dots,x_{i-1},x_{i+1},\dots, x_N=1}^d \left\{v^{N,i}(t, \bm{x}) \prod_{j \neq i} m_0(x_j) - u^{N,i}(t,\bm{x}) \prod_{j\neq i} m_0(x_j) \right.\right. \nonumber \\
& \left. \left. + u^{N,i}(t,\bm{x})\prod_{j\neq i} m_0(x_j)\right\} - U(t,x_i,m_0)\right|m_0(x_i) \nonumber \\
& \leq \mathbb{E}[|v^{N,i}(t,\bm{Z}) - u^{N,i}(t,\bm{Z})|] + \sum_{x_1,\dots,x_N=1}^d |u^{N,i}(t,\bm{x}) - U(t,x_i,m_0)|\prod_{j=1}^N m_0(x_j),
\label{20}
\end{align}
where in the last inequality the initial data $\bm{Z} = (Z_1, \dots, Z_N)$ are distributed as $m_0$.

By (\ref{eqn:2}), the first term in \eqref{20} is of order $1/N$. For the second term we further estimate, using again the Lipschitz continuity of $U$ with respect to $m$,
\begin{align*}
\sum_{x_1,\dots,x_N=1}^d & |u^{N,i}(t,\bm{x}) - U(t,x_i,m_0)|\prod_{j=1}^N m_0(x_j)\\
& =  \sum_{x_1,\dots,x_N=1}^d |U(t,x_i,m_{\bm{x}}^{N,i}) - U(t,x_i,m_0)|\prod_{j=1}^N m_0(x_j) \\
& \leq C \mathbb{E}\left[\bm{d}_1(m_{\bm{Z}}^{N,i}, m_0)\right] \leq  \frac{C}{\sqrt{N}},
\end{align*} 
where in the last inequality we used that $\mathbb{E}\left[\bm{d}_1(m_{\bm{Z}}^{N}, m_0)\right] \leq \frac{C}{\sqrt{N}}$, thanks to Theorem 1 of
\cite{fournier}, where $\bm{Z} := (Z_1 \dots, Z_N)$, the $Z_i$'s are i.i.d. initial data, $m_0$-distributed, $\bm{d}_1$ is the $1$-Wasserstein distance and $m_{\bm{Z}}^N$ is the corresponding empirical measure.  
Overall, we have bounded \eqref{20} by a term of order $1/\sqrt{N}$, and thus  \eqref{5} is also proved.
\end{proof}

Finally, we get to the proof of the propagation of chaos (Theorem \ref{chaos}). Recall that the $Y_{i,t}$'s are the optimal processes, i.e. the solutions to system 
\eqref{eqn:13}, the $X_{i,t}$'s are the processes associated to the functions $u^{N,i}$, i.e. they solve system \eqref{eqn:12}, while the $\tilde{X}_{i,t}$'s - to which we would like to prove convergence - are the decoupled limit processes (they solve system \eqref{limit}). First, we need the following lemma, whose proof can be found for example in \cite{rachev}:
\begin{lem}
\label{lemma_imp}
Let $\tilde{\bm{X}}_t = (\tilde{X}_{i,t})_{i \in {1,\dots,N}}$ be $N$ i.i.d. processes with values in $\mathbb{R}$, 
with $Law(\tilde{X}_{i,t}) = m(t)$.
Then
\begin{equation}
\label{lem_chaos}
\mathbb{E}\left[\sup_{t \in [0,T]} \left|m_{\tilde{\bm{X}}_t}^{N,i} - m_t\right|\right] \leq C \mathbb{E}\left[\sup_{t \in [t_0,T]} \bm{d}_1 (m_{\tilde{\bm{X}}_t}^{N,i}, m_t)\right] \leq C N^{-1/9}.
\end{equation}
\end{lem}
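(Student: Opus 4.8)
The first inequality is purely a matter of metric equivalence. Since $P(\Sigma)$ is a compact subset of $\mathbb{R}^d$ (Section~\ref{model}), the Euclidean norm $|\cdot|$ and the Wasserstein distance $\bm{d}_1$ are comparable on it, so $|m_{\tilde{\bm X}_t}^{N,i} - m_t| \le C\,\bm{d}_1(m_{\tilde{\bm X}_t}^{N,i}, m_t)$ for every $t$; taking the supremum over $t \in [0,T]$ and then the expectation gives the first bound. It therefore remains to establish the rate $N^{-1/9}$ for the \emph{uniform in time} law of large numbers for the empirical measure of the i.i.d.\ càdlàg processes $\tilde X_1,\dots,\tilde X_{N-1}$, each having time marginal $m_t$.

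This is precisely the type of estimate proved in \cite{rachev}, and the plan is to invoke it directly. For completeness, the scheme I would follow to reprove it is the following. First, fix a mesh $\delta > 0$, a grid $0 = s_0 < \cdots < s_K = T$ with $K = \lceil T/\delta \rceil$, and split $\bm{d}_1(m_{\tilde{\bm X}_t}^{N,i}, m_t)$, for $t \in [s_k, s_{k+1}]$, into the oscillation $\bm{d}_1(m_{\tilde{\bm X}_t}^{N,i}, m_{\tilde{\bm X}_{s_k}}^{N,i})$, the value on the grid $\bm{d}_1(m_{\tilde{\bm X}_{s_k}}^{N,i}, m_{s_k})$, and $|m_{s_k} - m_t|$. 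On the grid, for each fixed $k$ the vector $m_{\tilde{\bm X}_{s_k}}^{N,i} - m_{s_k}$ has components that are centred averages of $N-1$ i.i.d.\ bounded random variables (the indicators of $\{\tilde X_{j,s_k} = x\}$), so Hoeffding's inequality gives exponential concentration at scale $(N-1)^{-1/2}$, and a union bound over the $K$ grid points yields $\mathbb{E}\big[\max_{k}\bm{d}_1(m_{\tilde{\bm X}_{s_k}}^{N,i}, m_{s_k})\big] \le C\sqrt{(\log K)/N}$. For the oscillation, the chains of interest (those of \eqref{limit}) jump at rate at most $dM$, so the normalized number of jumps of all $N-1$ chains on an interval of length $\delta$ has expectation $O(\delta)$, and its maximum over the $K$ intervals is $O(\delta + (\log K)/N)$ in expectation; moreover $t \mapsto m_t$ is Lipschitz, as it solves the Kolmogorov--Fokker--Planck equation of \eqref{eqn:MFG} with bounded rates, so $|m_{s_k} - m_t| \le C\delta$ on $[s_k, s_{k+1}]$. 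Summing these contributions gives $\mathbb{E}\big[\sup_t \bm{d}_1(m_{\tilde{\bm X}_t}^{N,i}, m_t)\big] \le C\big(\delta + \sqrt{(\log K)/N}\big)$, and the balancing of $\delta$ against $N$ carried out in \cite{rachev} produces the exponent $N^{-1/9}$, which is amply sufficient for Theorem~\ref{chaos}.

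The only real difficulty is the supremum \emph{inside} the expectation: without it the statement is the classical $N^{-1/2}$ Wasserstein law of large numbers on a finite space, immediate from a variance computation, whereas with it one is forced into the time-discretization argument above. Its two delicate points are transferring the pointwise concentration to a bound on the maximum over the grid without destroying the polynomial rate in $N$, and estimating the between-grid oscillation through the jump counts of the underlying chains — both handled by the cited result.
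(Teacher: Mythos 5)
Your proposal is correct, and its primary route coincides with the paper's: the paper gives no proof of Lemma~\ref{lemma_imp} beyond the citation of \cite{rachev} (the first inequality being exactly the metric-equivalence observation already recorded in \eqref{memp}), so invoking that reference is all that is required. The supplementary time-discretization sketch you give is sound and is worth a comment, because it is genuinely different from the general transportation-distance estimate in \cite{rachev}: by exploiting the finite state space (Hoeffding plus a union bound over the grid) and the uniformly bounded jump rates of the chains \eqref{limit} (to control the between-grid oscillation via jump counts, together with the Lipschitz continuity of $t\mapsto m_t$ from the Kolmogorov--Fokker--Planck equation), your balancing with $\delta\sim N^{-1/2}$ actually yields $\mathbb{E}\bigl[\sup_t \bm{d}_1(m^{N,i}_{\tilde{\bm X}_t},m_t)\bigr]\le C\sqrt{(\log N)/N}$, which is substantially better than $N^{-1/9}$. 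The only inaccuracy is attributing the exponent $1/9$ to the optimization in your own scheme: that exponent comes from the much more general setting of \cite{rachev} (processes in $\mathbb{R}$ without the finite-state structure), whereas your argument, carried out to the end, would improve the rates in \eqref{lem_chaos}, \eqref{cha} and \eqref{lln}. Since $N^{-1/9}$ is only an upper bound, this discrepancy does not affect the validity of the lemma or of Theorem~\ref{chaos}.
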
 

\begin{proof}[Proof of Theorem~\ref{chaos}] 

\noindent
The assertion of the theorem is proved if we show that
\begin{equation}
\label{last}
\mathbb{E}\left[\sup_{t \in [0,T]} |X_{i,t} - \tilde{X}_{i,t}|\right] \leq C N ^{-1/9}.
\end{equation}
Indeed, by the triangle inequality and \eqref{eqn:1} in Theorem \ref{crucial} we can estimate
\begin{align*}
\mathbb{E}\left[\sup_{t \in [0,T]} |Y_{i,t} - \tilde{X}_{i,t}|\right] & \leq \mathbb{E}\left[\sup_{t \in [0,T]} |Y_{i,t} - X_{i,t}|\right]  + \mathbb{E}\left[\sup_{t \in [0,T]}|X_{i,t} - \tilde{X}_{i,t}|\right]\\
& \leq C(N^{-1} + N^{-1/9}).
\end{align*}
We are then left to prove \eqref{last}. As in the proof of \eqref{eqn:1}, we have
\begin{align*}
\rho&(t) := \mathbb{E}\left[\sup_{s \in [0,t]} |X_{i,s} - \tilde{X}_{i,s}|\right] \\
& \leq \mathbb{E}\left[\int_{0}^t \left|\alpha^*(X_{i,s}, \Delta^i u^{N,i}(\bm{X}_s)) - 
\alpha^*( \tilde{X}_{i,s}, \Delta^x U(s, \tilde{X}_{i,s}, m(s)))\right| ds + \int_{0}^t \left|X_{i,s} - \tilde{X}_{i,s}\right|ds\right]\\
& \leq \mathbb{E}\left[\int_{0}^t \left| \alpha^*(X_{i,s}, \Delta^x U(r,X_{i,s},m_{\bm{X}_s}^{N,i})) - \alpha^*(X_{i,s}, \Delta^x U(s,\tilde{X}_{i,s},m_{\tilde{\bm{X}}_s}^{N,i}))\right| ds + \int_{0}^t \left|X_{i,s} - \tilde{X}_{i,s}\right|ds\right.\\
& \left.+ \int_{0}^t \left|\alpha^*(X_{i,s}, \Delta^x U(s,\tilde{X}_{i,s},m_{\tilde{\bm{X}}_s}^{N,i})) - \alpha^*(\tilde{X}_{i,s}, \Delta^x U(s,\tilde{X}_{i,s},m(s)))\right| ds\right].
\end{align*}
By the Lipschitz continuity of the optimal controls, and of $\Delta^x U$, we can write
\begin{align*}
\rho(t) & \leq C \int_{0}^t  \mathbb{E}\left[|X_{i,s} - \tilde{X}_{i,s}| + \left|m_{\tilde{\bm{X}}_s}^{N,i} - m_{\bm{X}_s}^{N,i}\right| + \left|m_{\tilde{\bm{X}}_s}^{N,i} - m(s)\right|\right]ds\\
& \leq C \int_{0}^t \mathbb{E} \left[|X_{i,s} - \tilde{X}_{i,s}| + \frac{1}{N-1}\sum_{j \neq i} |X_{j,s} - \tilde{X}_{j,s}| + \left|m_{\tilde{\bm{X}}_s}^{N,i} - m(s)\right|\right]ds.
\end{align*}
Using \eqref{lem_chaos} of Lemma \ref{lemma_imp} and the exchangeability of the processes, we obtain
\begin{align*}
\rho(t) &  
\leq C \int_{0}^t \left(\mathbb{E}\left[\sup_{r \in [0,s]} |X_{i,r} - \tilde{X}_{i,r}|\right] + \frac{1}{N-1}\sum_{j \neq i}\mathbb{E}\left[\sup_{r \in [0,s]} |X_{j,r} - \tilde{X}_{j,r}|\right]\right) ds \\
& + C \mathbb{E}\left[\sup_{r \in [0,T]}\left|m_{\tilde{\bm{X}}_r}^{N,i} - m_r\right|\right]\\
& \leq C \int_{0}^t \rho(s) ds + C N^{-1/9},
\end{align*}
which, by Gronwall's Lemma, concludes the proof of \eqref{cha}. Finally \eqref{lln} follows from \eqref{cha} and \eqref{lem_chaos}, using also 
\eqref{memp}.
\end{proof}

\section{Fluctuations and Large Deviations}
\label{fl}
The convergence results, Theorem \ref{fund} and \ref{chaos}, allow to derive a Central Limit Theorem and a Large Deviation Principle for the asymptotic behaviour of the empirical measure process of the $N$-player game  optimal trajectories. First of all, we recall from Proposition \ref{simmetry} that, for any $i$,
 the value function $v^{N,i}$ of player $i$ in the $N$-player game is invariant under permutations of
 $(x_1,\ldots,x_{i-1},x_{i+1}, \ldots,x_N)$. This is equivalent to say that the value functions can be viewed as functions of the empirical measure of the system,
i.e. there exists a map $V^N: [0,T] \times \Sigma \times P(\Sigma)$ such that
\be
v^{N,i}(t, \bm{x}) = V^N (t,x_i, m_{\bm{x}}^{N,i})
\ee
for any $i=1,\ldots,N$, $t\in [0,T]$ and $\bm{x}\in \Sigma^N$.

\subsection{Dynamics of the empirical measure process}

We consider the empirical measure process of the optimal evolution $\bm{Y}$ - defined in \eqref{eqn:13} - of the $N$-player game. If the system is in $\bm{x}$ at time $t$, then the rate at which player $i$ goes from $x_i$ to $y$ is given, via the optimal control, by 
\be
\alpha^*_y(x_i, \Delta^i V^N (t,x_i, m_{\bm{x}}^{N,i})) =: \Gamma^N _{x_i,y}(t, m^N_{\bm{x}}),
\ee
i.e. by a function $\Gamma^N$ which depends only on the empirical measure  $m^N_{\bm{x}}$ and on the number of players $N$. 

Thus the empirical measure of the system $(m^N_t)_{t\in [0,T]}$, 
$m^N_t := m^N_{\bm{Y}}(t)= \frac 1N \sum_{i=1}^N \delta_{Y_{i,t}}$,
 evolves as a (time-inhomogeneous) Markov process on $[0,T]$, with values in 
$S_N := P(\Sigma) \cap \frac 1N \mathbb{Z}^d$. The number of players in state $x$, when the empirical measure is $m$, is $N m_x$. 
Hence the jump rate of $m^N$ in the direction 
$\frac 1N (\delta_y - \delta_x)$ at time $t$ is $N m_x \Gamma^N_{x,y} (t,m)$. 
Therefore the generator of the time-inhomogeneous Markov process $m^N$ is given, at time $t$, by
\be
\mathcal{L}^N_t g (m) := N \sum_{x,y \in \Sigma} m_x \Gamma^N_{x,y} (t,m) \left[g\left(m+\frac 1N (\delta_y - \delta_x) \right) -g(m)\right],
\label{mn}
\ee
for any $g: S_N \longrightarrow \mathbb{R}$.

Theorem \ref{chaos} implies that the empirical measures converge in $L^1$ 
- on the space of trajectories $D([0,T],P(\Sigma))$ - to the deterministic flow of measures $m$ which is the unique solution of the Mean Field Game system, whose dynamics is given by the KFP ODE
\be
\begin{cases}
\frac{d}{dt} m(t) = \Gamma (t,m(t))^\dagger m(t)\\
m(0) = m_{0},
\end{cases}
\ee
where $\Gamma$ is the matrix defined by 
\be
\Gamma_{x,y}(t,m):= \alpha^{*}_y(x, \Delta^x U(t,x,m))
\label{Gamma}
\ee
 and $U$ is the solution to the Master Equation. 
Viewing $m(t)$ as a Markov process - and so we will  write $m_t$ in this section -, its infinitesimal generator is given, at time $t$, by
\be
\mathcal{L}_t g (m) :=  \sum_{x,y \in \Sigma} m_x \Gamma_{x,y} (t,m) [D^m g(m,x)]_y
\ee
for any $g: P(\Sigma) \longrightarrow \mathbb{R}$. Thanks to \eqref{eqn:identity}, the generator can be equivalently written as 
\be
\mathcal{L}_t g (m) := \sum_{x,y \in \Sigma} m_x \Gamma_{x,y} (t,m) [D^m g(m,1)]_y = m^\dagger \Gamma(t,m) D^m g(m,1).
\ee

In order to prove the asymptotic results, we will also consider the empirical measure of the process $\bm{X}$ defined in \eqref{eqn:12}, in which each player chooses the same control 
$\Gamma_{x,y}$ independent of $N$. We denote by $\eta^N_t := \frac 1N\sum_{i =1}^N \delta_{X_i(t)}$ the empirical measure process of $\bm{X}$, whose generator is given, for any $g: P(\Sigma) \longrightarrow \mathbb{R}$, by
\be
\mathcal{M}_t^N g(m) := N \sum_{x,y \in \Sigma} m_x \Gamma_{x,y}(t,m)\left[g\left(m + \frac{1}{N}(\delta_y - \delta_x)\right) - g(m)\right].
\label{eta}
\ee

\subsection{Central Limit Theorem}

A natural refinement of the Law of Large Numbers \eqref{lln} consists in studying the fluctuations around the limit,
 that is the asymptotic distribution of $m^N_t - m_t$. 

This can be done through a functional Central Limit Theorem: we define the fluctuation flow
\begin{equation}
\rho^N_t := \sqrt{N}(m^N_t - m_t) \quad t \in [0,T]
\label{eqn:fluctuation}
\end{equation}
and study its asymptotic behavior as $N$ tends to infinity. We follow a classical weak convergence approach based on uniform convergence of the generator of the fluctuation flow \eqref{eqn:fluctuation} to a limiting generator of a diffusion process to be determined; see  e.g. \cite{daipra} for reference. Before stating the theorem we observe that the process \eqref{eqn:fluctuation} has values in $P_0(\Sigma)$, which in the following we treat as a subset of $\mathbb{R}^{d}$. 
\begin{thm}[Central Limit Theorem]
Let $U$ be a regular solution to the Master Equation and assume (\textbf{H1}). Then the fluctuation flow $\rho^N_t$ in \eqref{eqn:fluctuation} converges, as $N \to \infty$, in the sense of weak convergence of stochastic processes, to a limiting Gaussian process $\rho_t$ which is the solution of the linear SDE
\begin{equation}
\begin{cases}
\label{eqn:diff}
d \rho_t = \left(\Gamma(t, m_t)^\dagger \rho_t + b(t, m_t, \rho_t)\right) dt + \sigma(t,m_t) dB_t,\\
\rho_0= \bar{\rho},
\end{cases}
\end{equation}
where $\bar{\rho}$ is the limit of $\rho^N_0$ in distribution, $B$ is a standard $d$-dimensional Brownian motion, $\Gamma$ is the transition rate matrix  in \eqref{Gamma}, $b\in \mathbb{R}^d$ is linear in $\mu$ and defined, for any $y \in \Sigma$ and  $\mu \in P_0(\Sigma)$, by
\begin{equation}
b(t, m, \mu)_y := \sum_{x \in \Sigma} m_x \left[D^m \Gamma_{x,y}(t, m,1) \cdot \mu\right],
\end{equation}
 and $\sigma \in \mathbb{R}^{d\times d}$ is given by the relations
\begin{align}
\label{eqn:relations}
(\sigma^2)_{x,y}(t,m) & = -(m_x \Gamma_{x,y}(t,m) + m_y \Gamma_{y,x}(t,m)), \text{ for } x \neq y,\\
\label{eqn:relations2}
(\sigma^2)_{x,x}(t,m) & = \sum_{y \neq x} (m_y \Gamma_{y,x}(t,m) + m_x \Gamma_{x,y}(t,m)).
\end{align}
In particular the matrix $\sigma^2$ is the opposite of the generator of a Markov chain, is symmetric and positive semidefinite with one null eigenvalue, and the same properties hold for $\sigma$, meaning that $\rho_t \in P_0(\Sigma)$ for any $t$.
\label{clt}
\end{thm}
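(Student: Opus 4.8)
The plan is to prove the functional central limit theorem via the classical generator-convergence approach (as in \cite{daipra}): we exhibit the limiting process through its generator and martingale problem, verify uniform convergence of the pre-limit generators $\mathcal{L}^N_t$ of $\rho^N_t$ to the limiting generator, check tightness of the laws of $(\rho^N)$ on the Skorokhod space $D([0,T],P_0(\Sigma))$, and conclude that every weak limit point solves the martingale problem for the linear SDE \eqref{eqn:diff}, which has a unique (Gaussian) solution. First I would compute the generator of $\rho^N_t = \sqrt N(m^N_t - m_t)$: using \eqref{mn} together with the fact that $m_t$ itself is deterministic and evolves by the KFP ODE $\dot m_t = \Gamma(t,m_t)^\dagger m_t$, the generator acting on a smooth test function $\phi:P_0(\Sigma)\to\mathbb R$ becomes, after writing $m^N = m_t + \rho/\sqrt N$, a sum of two pieces: an ``increment'' term $N\sum_{x,y} m^N_x \Gamma^N_{x,y}(t,m^N)\big[\phi(\rho + \tfrac{1}{\sqrt N}(\delta_y-\delta_x)) - \phi(\rho)\big]$ coming from the jumps of $m^N$, and a drift correction $-\sqrt N\,\dot m_t\cdot D\phi(\rho)$ coming from the deterministic motion of $m_t$. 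Taylor-expanding $\phi$ to second order in the jump size $1/\sqrt N$, the first piece produces a first-order term $\sqrt N \sum_{x,y} m^N_x\Gamma^N_{x,y}(t,m^N)(\delta_y-\delta_x)\cdot D\phi(\rho)$ plus a second-order term $\tfrac12\sum_{x,y} m^N_x\Gamma^N_{x,y}(t,m^N)\,(\delta_y-\delta_x)^\dagger D^2\phi(\rho)(\delta_y-\delta_x)$ plus an $O(1/\sqrt N)$ remainder.

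Next I would identify the limits. The potentially divergent $O(\sqrt N)$ terms are $\sqrt N\big[\sum_{x,y} m^N_x\Gamma^N_{x,y}(t,m^N)(\delta_y-\delta_x) - \dot m_t\big]\cdot D\phi(\rho)$; since $\sum_{x,y} m_x\Gamma_{x,y}(t,m)(\delta_y-\delta_x) = \Gamma(t,m)^\dagger m$ and $\dot m_t = \Gamma(t,m_t)^\dagger m_t$, this bracket vanishes exactly at $m^N = m_t$, so by a first-order Taylor expansion in $m^N - m_t = \rho/\sqrt N$ the $\sqrt N$ factor is absorbed and the limit is the linear-in-$\rho$ drift $\big(\Gamma(t,m_t)^\dagger\rho + b(t,m_t,\rho)\big)\cdot D\phi(\rho)$, where $b$ is precisely the term coming from differentiating $m_x\Gamma_{x,y}(t,m)$ in $m$ along $\rho$ — here one uses that $\Gamma^N \to \Gamma$ uniformly (this follows from the convergence of $V^N$ to $U$ and the regularity of $U$, i.e. Theorem \ref{fund} and Definition \ref{def}, plus the Lipschitz dependence of $\alpha^*$ on $p$ from \textbf{(H1)}), and that $D^m\Gamma$ is continuous. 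The second-order term converges to $\tfrac12\sum_{x,y} (m_t)_x\Gamma_{x,y}(t,m_t)(\delta_y-\delta_x)^\dagger D^2\phi(\rho)(\delta_y-\delta_x)$, and a direct computation shows $\sum_{x,y} m_x\Gamma_{x,y}(t,m)(\delta_y-\delta_x)(\delta_y-\delta_x)^\dagger$ is exactly the matrix $\sigma^2(t,m)$ given by \eqref{eqn:relations}--\eqref{eqn:relations2}; this is the standard computation identifying the covariance of a finite-state jump fluctuation field with minus the generator of the associated reversible-type chain, and it makes $\sigma^2$ symmetric, positive semidefinite, and annihilating the vector $(1,\dots,1)$ (so $\rho_t$ stays in $P_0(\Sigma)$). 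Thus $\mathcal{L}^N_t\phi \to \mathcal{L}^{\mathrm{lim}}_t\phi := (\Gamma(t,m_t)^\dagger\rho + b(t,m_t,\rho))\cdot D\phi + \tfrac12\mathrm{Tr}(\sigma^2(t,m_t)D^2\phi)$ uniformly on compacts, uniformly in $t\in[0,T]$.

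For tightness I would use the Aldous--Rebolledo criterion: the compact-uniform generator bound gives $\sup_N \mathbb{E}[\,|\phi(\rho^N_{t}) - \phi(\rho^N_s)|\,\big|\mathcal F_s] \le C|t-s|$ for $\phi$ the coordinate functions and their squares, controlling the predictable quadratic variation and drift of the associated martingales; together with the assumed convergence $\rho^N_0 \to \bar\rho$ in distribution (and the a priori bound on $\rho^N$ on compact time intervals, which can be obtained from the $N^{-1}$ bound of \eqref{eqn:1} comparing $m^N_{\bm Y}$ with $m^N_{\bm X}$ and a fluctuation estimate for the i.i.d.-type process $\eta^N$, or more simply from the generator bound itself), this yields tightness of $(\rho^N)$ in $D([0,T],P_0(\Sigma))$. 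Any weak limit point $\rho$ then solves the martingale problem for $\mathcal{L}^{\mathrm{lim}}_t$ with initial law $\delta_{\bar\rho}$; since the coefficients are linear in $\rho$ with time-continuous (indeed Lipschitz, by the regularity of $U$) coefficients, this martingale problem is well posed and its unique solution is the Gaussian process solving the linear SDE \eqref{eqn:diff}, which identifies the limit uniquely and completes the proof. The main obstacle is the control of the a priori divergent $\sqrt N$ terms: one must be careful that the cancellation $\sum_{x,y} m^N_x\Gamma^N_{x,y}(t,m^N)(\delta_y-\delta_x) - \dot m_t = O(1/\sqrt N)$ genuinely holds with the \emph{pre-limit} rates $\Gamma^N$ and not just $\Gamma$, so that one needs a rate of convergence of $\Gamma^N$ to $\Gamma$ of order $o(1/\sqrt N)$ in the relevant norm — or, alternatively, one first replaces $m^N_{\bm Y}$ by the empirical measure $\eta^N$ of the process $\bm X$ (which uses the $N$-independent rates $\Gamma$) at a cost $O(1/N)$ via \eqref{lxy}, reducing the whole fluctuation analysis to the cleaner process $\eta^N$ whose generator \eqref{eta} involves only $\Gamma$, and then transfers the CLT back to $\rho^N$.
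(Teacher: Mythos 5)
Your proposal is correct and follows essentially the same route as the paper: a second-order Taylor expansion of the generator of the rescaled fluctuation, cancellation of the divergent $O(\sqrt N)$ terms via the KFP equation together with the linearization of $\Gamma$ in $m$, and identification of the limiting generator with that of the linear SDE \eqref{eqn:diff}, including the same computation of $\sigma^2$. The reduction to the auxiliary empirical measure $\eta^N$ with $N$-independent rates via \eqref{lxy}, which you offer only as an alternative at the very end, is in fact the paper's opening move (it sidesteps having to quantify $\Gamma^N-\Gamma$ at rate $o(1/\sqrt N)$), and the paper outsources tightness and limit identification to the cited generator-convergence machinery rather than running Aldous--Rebolledo explicitly as you propose.
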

\begin{proof}
The key observation is that we can reduce ourselves to study the asymptotics of the fluctuation flow
\begin{equation}
\label{eqn:new_flow}
\mu^N_t := \sqrt{N}(\eta^N_t - m_t),
\end{equation}
which is more standard since $\eta^N_t$, whose generator $\mathcal{M}$ is defined in  \eqref{eta}, is the empirical measure of an uncontrolled system of $N$ mean-field interacting particles.
Indeed, by  \eqref{lxy} we have that $\sqrt{N}(m^N - \eta^N)$ tends to $0$ almost surely as $N$ goes to infinity. 

Thus, it remains to prove the convergence in law of \eqref{eqn:new_flow} to the solution to \eqref{eqn:diff}. 
The convergence of $\mu^N_0$ (and $\rho^N_0$) to the initial condition $\bar{\rho}$ follows from the Central Limit Theorem for the i.i.d. sequence of initial conditions $Z_i$ in systems (22) and (28).  
Then, we compute the generator of \eqref{eqn:new_flow} for $t\geq 0$.
We note that $\mu^N_t$ is obtained from $\eta^N_t$ through a time dependent, linear invertible transformation $\Phi_t : S_N \to P_0(\Sigma) \subset \mathbb{R}^{d}$, defined by
\begin{equation*}
\Phi_t(\vartheta) := \sqrt{N}(\vartheta - m_t),
\end{equation*}
with inverse $\Phi_t^{-1}(\mu) := m_t + \frac{\mu}{\sqrt{N}}$.
Thus, the generator $\mathcal{H}_t^N$ of \eqref{eqn:new_flow} can be written as
\begin{equation}
\label{eqn:generator}
\mathcal{H}_t^N g (\mu) = \mathcal{M}_t^N[g \circ \Phi_t](\Phi_t^{-1}(\mu)) + \frac{\partial}{\partial t} [g \circ \Phi_t](\Phi_t^{-1}(\mu)),
\end{equation}
for any $g : P_0(\Sigma) \to \mathbb{R}$ regular and with compact support (we can extend the definition of $g$ to be a smooth function in the whole space $\mathbb{R}^d$, so that the usual derivatives are well defined).
We have
\begin{align*}
\frac{\partial}{\partial t} [g \circ \Phi_t](\Phi_t^{-1}(\mu)) & = - \sqrt{N} \nabla_{\mu} g(\mu) \cdot \frac{d}{dt}m_t = - \sqrt{N} \nabla_{\mu} g(\mu) \cdot \left(\Gamma\left(t,m_t\right)^\dagger m_t\right)\\
& = -\sqrt{N}\sum_{x,y \in \Sigma} \frac{\partial}{\partial \mu_y}g(\mu)\Gamma_{x,y}(t,m_t)(m_{t})_x.
\end{align*}
where the second equality follows from the KFP equation for $m_t$.
For the remaining part in  \eqref{eqn:generator}, we have
\begin{align*}
\mathcal{M}_t^N[g \circ \Phi_t](\Phi_t^{-1}(\mu)) & = N \sum_{x,y \in \Sigma} \left(m_t + \frac{\mu}{\sqrt{N}}\right)_x \Gamma_{x,y}\left(t, m_t + \frac{\mu}{\sqrt{N}}\right)\times\\
& \times \left\{[g\circ \Phi_t]\left(m_t + \frac{\mu}{\sqrt{N}} + \frac{1}{N}(\delta_y - \delta_x)\right) - [g \circ \Phi_t]\left(m_t + \frac{\mu}{\sqrt{N}}\right)\right\}\\
& =   N \!\! \sum_{x,y \in \Sigma} \!\! \left(m_t \!+\! \frac{\mu}{\sqrt{N}}\right)_x \!\! \Gamma_{x,y}\!\!\left(t, m_t \!+\! \frac{\mu}{\sqrt{N}}\right)\left\{g\left(\mu + \frac{1}{\sqrt{N}}(\delta_y - \delta_x)\right) - g(\mu)\right\}.
\end{align*}
Thus, we have found
\begin{align*}
\mathcal{H}_t^N g (\mu) & = N \sum_{x,y \in \Sigma} \left(m_t + \frac{\mu}{\sqrt{N}}\right)_x \Gamma_{x,y}\left(t, m_t + \frac{\mu}{\sqrt{N}}\right)\left\{g\left(\mu + \frac{1}{\sqrt{N}}(\delta_y - \delta_x)\right) - g(\mu)\right\}\\
& -\sqrt{N}\sum_{x,y \in \Sigma} \frac{\partial}{\partial \mu_y}g(\mu)\Gamma_{x,y}(t,m_t)(m_{t})_x.
\end{align*}
In order to perform a Taylor expansion of the generator, we first develop the term
\begin{align*}
g\left(\mu + \frac{1}{\sqrt{N}}(\delta_y - \delta_x)\right) &- g(\mu)\\
& = \frac{1}{\sqrt{N}} \nabla_{\mu}g(\mu) \cdot (\delta_y - \delta_x) + \frac{1}{2N} (\delta_y - \delta_x)^\dagger D^2_{\mu\mu} g(\mu)(\delta_y - \delta_x) + O\left(\frac{1}{N^{3/2}}\right)\!.
\end{align*}
Substituting, we get
\begin{align*}
\mathcal{H}_t^N g (\mu) & = \sqrt{N} \sum_{x,y \in \Sigma}  \left(m_t + \frac{\mu}{\sqrt{N}}\right)_x  \Gamma_{x,y}\left(t, m_t + \frac{\mu}{\sqrt{N}}\right) \nabla_{\mu} g(\mu) \cdot (\delta_y - \delta_x) \\
& + \frac{1}{2} \sum_{x,y \in \Sigma}  \left(m_t + \frac{\mu}{\sqrt{N}}\right)_x  \Gamma_{x,y}\left(t, m_t + \frac{\mu}{\sqrt{N}}\right) (\delta_y - \delta_x)^\dagger D^2_{\mu\mu}g(\mu) (\delta_y - \delta_x)\\
& -\sqrt{N}\sum_{x,y \in \Sigma} \frac{\partial}{\partial \mu_y}g(\mu)\Gamma_{x,y}(t,m_t)(m_{t})_x + O\left(\frac{1}{\sqrt{N}}\right).
\end{align*}
Now, we note that 
\begin{align*}
\sum_{x,y \in \Sigma} &  \left(m_t + \frac{\mu}{\sqrt{N}}\right)_x  \Gamma_{x,y}\left(t, m_t + \frac{\mu}{\sqrt{N}}\right) \nabla_{\mu} g(\mu) \cdot (\delta_y - \delta_x)\\
&= \sum_{x,y \in \Sigma}  \left(m_t + \frac{\mu}{\sqrt{N}}\right)_x  \Gamma_{x,y}\left(t, m_t + \frac{\mu}{\sqrt{N}}\right) \frac{\partial}{\partial \mu_y} g(\mu),
\end{align*}
since $\sum_y \Gamma_{x,y} = 0$.
This property allows us to rewrite
\begin{align*}
\mathcal{H}_t^N g (\mu) & = \sum_{x,y \in \Sigma} \mu_x \Gamma_{x,y}\left(t, m_t + \frac{\mu}{\sqrt{N}}\right) \frac{\partial}{\partial \mu_y} g(\mu) \\
&+ \sqrt{N}\sum_{x,y \in \Sigma} (m_{t})_x\frac{\partial}{\partial \mu_y} g(\mu) \left[\Gamma_{x,y}\left(t,m_t + \frac{\mu}{\sqrt{N}}\right) - \Gamma_{x,y}(t,m_t)\right]  \\
& + \frac{1}{2} \sum_{x,y \in \Sigma}  \left(m_t + \frac{\mu}{\sqrt{N}}\right)_x  \Gamma_{x,y}\left(t, m_t + \frac{\mu}{\sqrt{N}}\right) (\delta_y - \delta_x)^\dagger D^2_{\mu\mu}g(\mu) (\delta_y - \delta_x) + O\left(\frac{1}{\sqrt{N}}\right).
\end{align*} 
Then, using the Lipschitz continuity of $\Gamma$ as we did in Proposition 3, we linearize the term
\begin{equation*}
\Gamma_{x,y}\left(t,m_t + \frac{\mu}{\sqrt{N}}\right) - \Gamma_{x,y}(t,m_t) = \frac{1}{\sqrt{N}} D^m\Gamma_{x,y}(t,m_t,1) \cdot \mu + O\left(\frac{1}{N}\right).
\end{equation*}
We thus deduce that
\begin{equation*}
\lim_{N \to \infty} \!\sup_{t \in [0,T]}\sup_{\mu \in P_0(\Sigma)} |\mathcal{H}^N_t g(\mu) -  \mathcal{H}_t g(\mu) | = 0 
\end{equation*}
for any $g$, the convergence being of order $\frac{1}{\sqrt{N}}$,
where 
\begin{align}
\label{eqn:limit_gen}
\mathcal{H}_t g (\mu) := \sum_{x,y \in \Sigma} & \mu_x \Gamma_{x,y}\left(t, m_t\right) \frac{\partial}{\partial \mu_y} g(\mu) + \sum_{x,y \in \Sigma}  (m_{t})_x \left[ D^m\Gamma_{x,y}(t,m_t,1) \cdot \mu\right]\frac{\partial}{\partial \mu_y} g(\mu)\\
& + \frac{1}{2} \sum_{x,y \in \Sigma}  \left(m_t\right)_x  \Gamma_{x,y}\left(t, m_t \right) (\delta_y - \delta_x)^\dagger D^2_{\mu\mu}g(\mu) (\delta_y - \delta_x).\nonumber
\end{align}
The proof is then completed if we show that the generator \eqref{eqn:limit_gen} is associated to the SDE \eqref{eqn:diff}.

The drift component can be immediately identified, since
\begin{equation*}
\sum_{x,y \in \Sigma} \mu_x \Gamma_{x,y}\left(t, m_t\right) \frac{\partial}{\partial \mu_y} g(\mu) = 
\left(\Gamma(t,m_t)^\dagger \mu \right)\cdot \nabla_{\mu} g(\mu),
\end{equation*}
and 
\begin{equation*}
\sum_{x,y \in \Sigma} (m_{t})_x \left[ D^m\Gamma_{x,y}(t,m_t,1) \cdot \mu\right]\frac{\partial}{\partial \mu_y} g(\mu) = b(t,\mu) \cdot \nabla_{\mu}g(\mu).
\end{equation*}
For the diffusion component, we first note that, for each $x, y \in \Sigma$,
\begin{equation*}
(\delta_y - \delta_x)^\dagger D^2_{\mu \mu}g(\mu) (\delta_y - \delta_x)  = \frac{\partial^2}{\partial \mu_y \mu_y} g(\mu) + \frac{\partial^2}{\partial \mu_x \mu_x} g(\mu) - \frac{\partial^2}{\partial \mu_x \mu_y} g(\mu) - \frac{\partial^2}{\partial \mu_y \mu_x} g(\mu),
\end{equation*}
so that
\begin{align*}
 \frac{1}{2} \sum_{x,y \in \Sigma}  & (\delta_y - \delta_x)^\dagger D^2_{\mu\mu}g(\mu) (\delta_y - \delta_x) (m_t)_x  \Gamma_{x,y}(t,m_t)  \\
 & = \frac{1}{2} \sum_{x,y \in \Sigma} \left[\frac{\partial^2}{\partial \mu_y \mu_y} g(\mu) + \frac{\partial^2}{\partial \mu_x \mu_x} g(\mu) - \frac{\partial^2}{\partial \mu_x \mu_y} g(\mu) - \frac{\partial^2}{\partial \mu_y \mu_x} g(\mu)\right] (m_t)_x  \Gamma_{x,y}(t,m_t),
\end{align*}
which is equal to  
\begin{equation*}
\frac{1}{2}Tr(\sigma^2 (t,m_t)D^2_{\mu\mu}g(\mu)) = \frac{1}{2}\sum_{x,y\in \Sigma} (\sigma^2(t,m_t))_{x,y} \frac{\partial^2}{\partial \mu_x \mu_y} g(\mu),
\end{equation*}
 if we define $(\sigma^2)_{x,y \in \Sigma}$ by the relations \eqref{eqn:relations} and \eqref{eqn:relations2}.

Finally, we observe that the limiting process $\rho_t$ defined in \eqref{eqn:diff} takes values in $P_0(\Sigma)$, as required. Indeed, by diagonalizing $\sigma^2$ - which is symmetric and such that its rows sum to $0$ -  we get that all the eigenvectors, besides the constant one relative to the null eigenvalue, have components which sum to $0$ (by orthogonality). The same properties hold for the square root matrix $\sigma$, so that equation \eqref{eqn:diff} preserves the space $P_0(\Sigma)$.     
\end{proof}

\subsection{Large Deviation Principle}

We state the large deviation result, which is a sample path Large Deviation Principle on $D([0,T];P(\Sigma))$. To define the rate function, we first introduce the local rate function $\lambda:\mathbb{R}\longrightarrow [0,+\infty]$,
\be
\lambda(r):=
\begin{cases}
r \log r - r +1 \quad& r>0,\\
1 & r=0,\\
+\infty & r<0.
\end{cases}
\ee
For $t\in [0,T]$, $m\in P(\Sigma)$ and $\mu \in P_0(\Sigma)$, define
\be
\Lambda(t, m,\mu) := \inf \left\{\sum_{x,y \in \Sigma} m_x\Gamma_{x,y}(t,m) \lambda\left(\frac{q_{x,y}}{\Gamma_{x,y}(t,m)}\right) :
q_{x,y} \geq 0 , \sum_{x,y\in\Sigma} q_{x,y} (\delta_y - \delta_x) = \mu \quad \forall x,y\right\}
\ee
and set, for $\gamma:[0,T]\longrightarrow P(\Sigma)$,
\be
I(\gamma) := 
\begin{cases}
\int_0^T \Lambda(t, \gamma(t),\dot{\gamma}(t))dt \quad & \mbox{ if } \gamma \mbox{ is absolutely continuous and } \gamma(0)=m_0 \\
+\infty &\mbox{ otherwise.}
\end{cases}
\label{3.8}
\ee

We are now able to state the Large Deviation Principle. We equip $D([0,T];P(\Sigma))$ with the Skorokhod $J_1$-topology and denote by 
$\mathcal{B}(D([0,T];P(\Sigma)))$ the associated Borel $\sigma$-algebra.

\begin{thm}[Large Deviation Principle]
\label{LD}
Let $U$ be a regular solution to the Master Equation and assume (\textbf{H1}).
Also, assume  that the initial conditions $(m^N_0)_{N\in\mathbb{N}}$ are deterministic and $\lim_N m^N_0=m_0$. Then the sequence of empirical measure processes $(m^N)_{N\in\mathbb{N}}$ satisfies the sample path Large Deviation Principle on $D([0,T];P(\Sigma))$ with the (good) rate function $I$. Specifically, 
\begin{itemize}
	\item[(i)] if $E\in \mathcal{B}(D([0,T];P(\Sigma)))$ is closed then
	\be
	\limsup_N \frac 1N \log \mathbb{P}(m^N \in E) \leq - \inf_{\gamma\in E} \left\{I(\gamma)  \right\}
	\ee
	\item[(ii)]  if $E\in \mathcal{B}(D([0,T];P(\Sigma)))$ is open then
	\be
	\liminf_N \frac 1N \log \mathbb{P}(m^N \in E) \geq - \inf_{\gamma\in E} \left\{I(\gamma)  \right\}
	\ee
	\item[(iii)] For any  $M<+\infty$ the set 
	\be
	\left\{\gamma \in D([0,T];P(\Sigma)) : I(\gamma) \leq M \right\}
	\ee
	is compact.
\end{itemize}
\label{large}
\end{thm}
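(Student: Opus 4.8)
\emph{Strategy.} The idea is to recognise $(m^N_t)_{t\in[0,T]}$ as the empirical measure of a mean-field system of interacting jump processes and to invoke the sample-path large deviation principle for such systems established in \cite{DRW}. By Proposition~\ref{simmetry}, in the optimal process $\bm Y$ of \eqref{eqn:13} particle $i$ jumps from $x$ to $y$ at rate $\Gamma^N_{x,y}(t,m^N_{\bm Y_t})$, so $(m^N_t)$ is the time-inhomogeneous Markov chain on $S_N$ with generator $\mathcal L^N_t$ of \eqref{mn}, driven by the $N$-dependent rate function $\Gamma^N$. The candidate limiting rates are the $\Gamma(t,m)$ of \eqref{Gamma}; by \textbf{(H1)} and Definition~\ref{def} these take values in $[\kappa,M]$ and are jointly Lipschitz in $(t,m)$, and the lower bound $\Gamma_{x,y}\ge\kappa>0$ coming from $A=[\kappa,M]^d$ makes the local rate $\Lambda$, hence the good rate function $I$ of \eqref{3.8}, non-degenerate even at $\partial P(\Sigma)$. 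I would then proceed in two steps.

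\emph{Step 1: uniform convergence of the rates.} Since $\Gamma^N_{x,y}(t,m)=\alpha^*_y\big(x,\Delta^x V^N(t,x,m')\big)$ for a measure $m'$ at $\bm d_1$-distance $O(1/N)$ from $m$, by the Lipschitz continuity of $\alpha^*$, of $\Delta^x U$ in $m$, and the bound \eqref{bounddelta}, it is enough to control $\|V^N(t,\cdot,m)-U(t,\cdot,m)\|$. First, the uniform-in-$N$ a priori estimate $\|\Delta^j v^{N,i}\|\le C/N$ for $j\ne i$ — a Lipschitz bound for the value functions in the empirical measure, established as in \cite{card} — says that $V^N(t,\cdot,\cdot)$ is $O(1)$-Lipschitz in $m$, uniformly in $N$. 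Second, by the $L^1(m_0)$ estimate \eqref{5} of Theorem~\ref{fund} applied with $m_0=m$ (dividing by $m(x)$), together with this Lipschitz continuity (to replace the averaged empirical measure appearing in $w^{N,i}$ by $m$), one gets $\|V^N(t,\cdot,m)-U(t,\cdot,m)\|\le C\,(\sqrt N\,\min_x m_x)^{-1}$ whenever $\min_x m_x>0$. Interpolating these two facts — given $m$ and $c>0$, approximate $m$ within $\bm d_1$-distance $c$ by an interior measure with all components $\ge c$ — and optimising in $c$ gives $\sup_{t,x,m}|V^N-U|\to 0$ with a polynomial rate (e.g. $O(N^{-1/4})$), hence $\sup_{t,m}\|\Gamma^N(t,m)-\Gamma(t,m)\|\to 0$ at the same rate, uniformly over all of $P(\Sigma)$.

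\emph{Step 2: invoking the abstract principle.} With the uniform convergence $\Gamma^N\to\Gamma$ from Step~1, the continuity and uniform positivity of $\Gamma$, and the hypothesis $m^N_0\to m_0$, I would apply the sample-path large deviation principle of \cite{DRW} for finite-state mean-field interacting jump systems. The time-inhomogeneity is dealt with by a standard time-discretization argument — freezing $\Gamma$ on a fine partition of $[0,T]$ and patching via its Lipschitz continuity in $t$, or equivalently running the homogeneous result of \cite{DRW} for the process $(t,m^N_t)$ on $[0,T]\times P(\Sigma)$ — which reduces the statement to the homogeneous case treated there. This yields the full principle on $D([0,T];P(\Sigma))$ with the $J_1$-topology: the upper bound (i), the lower bound (ii), and the goodness (iii), with rate function exactly $I$ of \eqref{3.8}.

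\emph{Main obstacle, and an alternative.} The genuinely delicate step is Step~1: upgrading the \emph{averaged} convergence of Theorem~\ref{fund} to a \emph{uniform} (in $t$ and $m$, up to $\partial P(\Sigma)$) control of $\Gamma^N-\Gamma$, strong enough for the rate function to pass to the limit in \cite{DRW}; this is where the symmetry of $v^{N,i}$, the a priori bound $\|\Delta^j v^{N,i}\|\le C/N$ and the Lipschitz regularity of $U$ are all used, together with the soft interpolation up to the boundary. An alternative to Steps~1--2 is to transfer the principle from the uncontrolled system $\eta^N$ of \eqref{eta}, whose rates $\Gamma$ are already $N$-independent so that \cite{DRW} applies to it directly, to $m^N$ by exponential equivalence: one couples $\bm Y$ and $\bm X$ through the common Poisson measures and controls $D_t:=\#\{i:Y_i(t)\ne X_i(t)\}$, which increases only when an already-coupled particle reads a Poisson atom into the symmetric difference of its two jump intervals — an event of rate $O((D_t+1)/N)$ per particle by the Lipschitz continuity of $\alpha^*$, \eqref{memp}, Propositions~\ref{prop1}--\ref{prop3.2} and Theorem~\ref{crucial} — so that $|m^N_t-\eta^N_t|\le\frac{2}{N}D_t$, and the principle would follow once $\tfrac1N\log\mathbb P(\sup_{t\le T}D_t>\delta N)\to-\infty$. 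This last bound is delicate, since the linear feedback in the birth rate of $D_t$ only yields exponential, not obviously super-exponential, decay; for this reason I would favour the direct route of Steps~1--2.
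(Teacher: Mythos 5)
Your high-level strategy (identify $m^N$ as a mean-field jump system with $N$-dependent rates and invoke the sample-path LDP of \cite{DRW}) is the paper's strategy, but your Step 1 aims at the wrong target and, as written, does not go through. The hypothesis one must verify for \cite{DRW} is the condition \eqref{2.3}: convergence, uniformly in $t$, of the \emph{products} $m^N_x\Gamma^N_{x,y}(t,m^N)$ to $m_x\Gamma_{x,y}(t,m)$ along sequences $m^N\in S_N$ with $m^N\to m$ — not uniform convergence of $\Gamma^N$ itself over all of $P(\Sigma)$ up to the boundary. This distinction is decisive: since $m^N_x=\frac1N\sum_i\mathbbm{1}_{\{x_i=x\}}$, the difference $m^N_x\bigl(\Gamma^N_{x,y}(t,m^N_{\bm x})-\Gamma_{x,y}(t,m^N_{\bm x})\bigr)$ is an \emph{average over the players} of $\alpha^*_y(x_i,\Delta^iV^N)-\alpha^*_y(x_i,\Delta^xU)$, which by the Lipschitz continuity of $\alpha^*$ is controlled directly by the averaged estimate \eqref{4} of Theorem \ref{fund}, giving an $O(1/N)$ bound with no boundary or interpolation issues (the remaining term $\bigl|m^N_x\Gamma_{x,y}(t,m^N)-m_x\Gamma_{x,y}(t,m)\bigr|$ is handled by the regularity of $U$ alone). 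Indeed the paper explicitly points out that $m^N_x\Gamma^N_{x,y}$ may vanish when no player occupies $x$, which is precisely why the product formulation of \cite{DRW} is the right one and why no lower bound or pointwise control near $\partial P(\Sigma)$ is needed.

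By contrast, your route to $\sup_{t,m}|V^N-U|\to0$ rests on two ingredients that are not available. First, the a priori bound $\|\Delta^jv^{N,i}\|\le C/N$ for $j\ne i$ is proved in the paper only for the projections $u^{N,i}$ of $U$ (Proposition \ref{prop1}), not for the value functions $v^{N,i}$ of the HJB system; establishing such a uniform Lipschitz-in-$m$ estimate for $V^N$ is a separate and nontrivial piece of work (it is one of the hard steps in \cite{card}) and nothing in this paper supplies it. Second, the $L^1(m_0)$ estimate \eqref{5} concerns $w^{N,i}$, an \emph{expectation} of $v^{N,i}$ over i.i.d.\ positions of the other players drawn from $m_0$ — it is not an evaluation of $V^N(t,\cdot,m)$ at the measure $m$, and converting it into one again requires the missing uniform Lipschitz bound. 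So the interpolation in Step 1 cannot be carried out from the stated results, and it is in any case unnecessary. Two smaller points: the goodness (iii) is obtained in the paper by citing Theorem 1.1 of \cite{12}, not from \cite{DRW}; and the coupling/exponential-equivalence alternative you sketch is indeed mentioned in the paper (via \cite{35}) only as a remark — you are right that the superexponential estimate it requires is delicate, and the paper does not take that route either.
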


We remark that the initial conditions are assumed to be deterministic only for simplicity, otherwise there would be another term in the rate function $I$.
Before proving Theorem \ref{LD}, let us give another characterization of $I$. For $m\in P(\Sigma)$ and 
$\theta\in \mathbb{R}^d$, define
\be
\Psi(t,m,\theta):= \sum_{x,y} m_x\Gamma_{x,y}(t,m) \left[e^{\theta\cdot(\delta_y-\delta_x)} -1\right]
\ee
and let $\Lambda^0$ be the Legendre transform of $\Psi$:
\be
\Lambda^0(t,m,\mu)= \sup_{\theta\in\mathbb{R}^d} \left[\theta\cdot\mu - \Psi(t,m,\theta)\right].
\ee
Define $I^0$ as in \eqref{3.8} but with $\Lambda$ replaced by $\Lambda^0$. Via a standard result in convex analysis, Proposition 6.2 in \cite{DRW} shows that 
$\Lambda=\Lambda^0$ and then $I=I^0$.


Several authors studied large deviation properties of mean field interacting processes similar to ours. However, most of them deal with the case in which the prelimit jump rates, $m^N_x\Gamma^N$, are constant and equal to the limiting rates $m_x\Gamma$; see e.g. \cite{25}, \cite{32} and \cite{31}. We mention that in this latter paper, as in many others, it is also assumed that the jump rates of the prelimit process are bounded from below and away from 0; this does not apply to our case, since the number of agents in a state $x$ could be 0, implying that $m^N_x \Gamma^N_{x,y}$ might also be 0. 

To prove the claim, we apply the results in \cite{DRW}: to our knowledge, it is the first paper which proves a Large Deviation Principle considering the jump rates of any player depending on $N$ (and deals also with systems with simultaneous jumps).  Theorem 3.4.1 in \cite{35} shows, however, the exponential equivalence of the processes $m^N$ and the processes $\eta^N$ given by \eqref{eta} in which the jump rates of the prelimit system $m^N_x\Gamma^N$ are replaced by $m_x\Gamma$, which does not depend on $N$; the proof uses a coupling of the two Markov chains. 
The results in \cite{DRW} and \cite{35} are derived assuming  the following properties:
\begin{enumerate}
	\item the dynamics of any agent is ergodic and the jump rates are uniformly bounded;
	\item for each $x,y\in \Sigma$, the limiting jump rates $\Gamma_{x,y}$ are Lipschitz continuous in $m$;
	\item for each $x,y\in \Sigma$, given any sequence $m^N\in S_N$ such that $\lim_N m^N=m$, 
	\be
\lim_N \sup_{0\leq t\leq T} |m^N_x\Gamma^N_{x,y}(t,m^N) - m_x\Gamma_{x,y}(t,m)|=0.
\label{2.3}
\ee
\end{enumerate}
Property (1) holds in our model since the jump rates of any player belong to $[\kappa,M]$, while (2) is true because of the regularity of the solution $U$ to the Master Equation.

\begin{proof}[Proof of Theorem \ref{large}]

The fact that $I$ is a good rate function, i.e condition (iii), is proved for instance in Theorem 1.1 of \cite{12}.
Due to Theorem 3.9 in  \cite{DRW}, in order to prove the claims (i) and (ii), it is enough to show  \eqref{2.3}. Actually \cite{DRW} studies time homogeneous Markov processes, but their results still apply in the non-homogeneous case if one proves the uniform in time convergence given by \eqref{2.3}.

Let $x,y\in\Sigma$, $m^N=m^N_{\bm{x}} \in S_N$, $\bm{x}= (x_1,\ldots,x_N)\in\Sigma^N$ 
and $m^N_{\bm{x}}\rightarrow m$. Then
\begin{align*}
\big|[m^N_{\bm{x}}]_x \Gamma^N_{x,y}(t,m^N_{\bm{x}})& - m_x\Gamma_{x,y}(t,m)\big| \leq \big|[m^N_{\bm{x}}]_x\Gamma^N_{x,y}(t,m^N_{\bm{x}}) - [m^N_{\bm{x}}]_x\Gamma_{x,y}(t,m^N_{\bm{x}})\big|\\
& + \big|[m^N_{\bm{x}}]_x\Gamma_{x,y}(t,m^N_{\bm{x}}) - m_x\Gamma_{x,y}(t,m)\big| =: A + B.
\end{align*}
The first term goes to zero, uniformly over time, thanks to \eqref{4}:
\begin{align*}
A&= \left|\frac 1N \sum_{i=1}^N \mathbbm{1}_{\left\{x_i=x\right\}} \alpha^*_y(x_i, \Delta^i V^N (t,x_i, m_{\bm{x}}^{N,i}))
- \frac 1N \sum_{i=1}^N \mathbbm{1}_{\left\{x_i=x\right\}} \alpha^*_y(x_i, \Delta^x U(t,x_i,m^N_{\bm{x}}))\right|\\
&\leq C \frac 1N \sum_{i=1}^N \left| \Delta^i V^N (t,x_i, m_{\bm{x}}^{N,i}) - \Delta^x U(t,x_i,m^N_{\bm{x}})\right|\\
&\leq C \sup_{\bm{x} \in \Sigma^N}\frac 1N \sum_{i=1}^N \left|v^{N,i}(t,\bm{x}) - U(t, x_i, m^N_{\bm{x}})\right| \leq \frac CN.
\end{align*}
While $B$ converges to 0, uniformly over $t$, for the regularity  of $U$:
\begin{align*}
B&= \left|[m^N_{\bm{x}}]_x \alpha^*_y(x, \Delta^x U(t,x,m^N_{\bm{x}})) - m_x\alpha^*_y(x, \Delta^x U(t,x,m))\right|\\
&\leq |\alpha^*_y(x, \Delta^x U(t,x,m^N_{\bm{x}}))| |[m^N_{\bm{x}}]_x-m_x| +
C|m_x| |\Delta^x U(t,x,m^N_{\bm{x}}) -\Delta^x U(t,x,m)|\\
&\leq C |m^N_{\bm{x}}-m|,
\end{align*}
which tends to 0 by assumption.
\end{proof}

\section{The Master Equation: well-posedness and regularity}
\label{mastersection}

In this section we study the well-posedness of  Equation \eqref{eqn:M} under the assumptions of monotonicity and regularity for $F,G,H$ we already introduced  \textbf{(Mon)}, \textbf{(RegFG)}, \textbf{(RegH)}.
A preliminary remark is that, thanks to Proposition 1 in \cite{gomes}, if $H$ is differentiable (and this is indeed the case of our assumptions) then 
\begin{equation}
\alpha^*_x(y, p) = - \frac{\partial}{\partial p_x} H(y,p).
\end{equation}
For this reason, we will in the following use $\alpha^*$ interchangeably with $-D_p H$. 

\begin{thm}
Assume \textbf{(Mon)}, \textbf{(RegFG)} and \textbf{(RegH)}. Then there exists a unique classical solution to \eqref{eqn:M} in the sense of Definition \ref{def}. Moreover it is regular.
\label{Master}
\end{thm}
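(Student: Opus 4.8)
The plan is to follow the method of characteristics as in \cite{card}, adapted to the finite-state setting, in three stages: (1) well-posedness and uniform estimates for the Mean Field Game system \eqref{eqn:MFG} started at an arbitrary $(t_0,m_0)$; (2) differentiability of the flow $m_0 \mapsto (u,m)$ with respect to the initial measure, via a linearized MFG system; (3) verification that the candidate $U(t_0,x,m_0) := u(t_0,x)$ is a classical solution, together with uniqueness. First I would solve \eqref{eqn:MFG} for each initial datum $(t_0,m_0)$. The KFP equation is a linear ODE in $m$ once the feedback $\alpha^*(x,\Delta^x u(t,x))$ is frozen, and since $\alpha^*\in A=[\kappa,M]^d$ the flow stays in the interior of the simplex for $t>t_0$; the HJB equation is a backward ODE with a Lipschitz right-hand side on the relevant compact $p$-region (assumption \textbf{(H1)}, which follows from \textbf{(RegH)}). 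A fixed-point argument on the map $m\mapsto u\mapsto \alpha^*\mapsto m$, combined with the Lasry–Lions monotonicity \textbf{(Mon)} to obtain uniqueness for arbitrary horizon $T$, gives a unique solution. The monotonicity argument is the standard one: given two solutions, test the difference of the HJB equations against the difference of the measures, integrate by parts in time, and use \eqref{mon} together with the strict convexity of $H$ coming from \eqref{bound} to force the two solutions to coincide. Uniform a priori bounds on $\|u\|$, $\|\Delta^x u\|$ (hence the constant $K$ of Remark \ref{boundv}) follow from the boundedness of $F,G$ and the structure of $H$.

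Next I would establish that the map $m_0\mapsto (u,m)$ is $C^1$. The natural object is the linearized MFG system obtained by formally differentiating \eqref{eqn:MFG} with respect to the initial condition along a direction $\mu\in P_0(\Sigma)$: this is a linear forward–backward system of ODEs in $(\partial_\mu u, \partial_\mu m)$, whose coefficients involve $D_pH$, $D^2_{pp}H$, $D^mF$, $D^mG$ and $\Gamma=\alpha^*(x,\Delta^x u)$, all of which are Lipschitz and bounded by \textbf{(RegH)} and \textbf{(RegFG)}. I would first prove well-posedness of this linear system — again monotonicity gives uniqueness, and the bound \eqref{bound} on $D^2_{pp}H$ provides the needed coercivity — then show, by a standard difference-quotient estimate, that the increments of $(u,m)$ along $s(\delta_z-\delta_y)$ converge to the solution of the linearized system as $s\to 0^+$. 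Setting $D^m U(t_0,x,m_0,\cdot) := \partial_\mu u(t_0,x)$ and differentiating once more (or, more economically, running the same argument for the linearized system) yields that $D^m U$ is itself Lipschitz in $m_0$, uniformly in $(t_0,x)$; this is precisely the extra regularity required for $U$ to be a \emph{regular} solution in the sense of Definition \ref{def}.

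Finally I would verify the Master Equation itself. Continuity of $U$ in all arguments and $C^1$-dependence in $t$ come from the ODE dependence of $(u,m)$ on the parameters; the PDE \eqref{eqn:M} is recovered by differentiating the semigroup-type identity $U(t_0,x,m_0)=u(t_0,x)$, using $\frac{d}{dt_0}$ along the flow: the $t_0$-derivative splits into a genuine $\partial_t U$ term plus a term $\int_\Sigma D^m U(t_0,x,m_0,y)\cdot \frac{d}{dt}m_y(t_0)\,dm_0(y)$, and substituting the KFP equation $\frac{d}{dt}m(t)=\Gamma(t,m(t))^\dagger m(t)$ together with the HJB equation for $u$ produces exactly \eqref{eqn:M}, with $\alpha^*(y,\Delta^y U)=\alpha^*(y,\Delta^y u)$. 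Uniqueness of the classical solution follows from the uniqueness for \eqref{eqn:MFG} (a classical solution of \eqref{eqn:M} generates, via its characteristics, a solution of the MFG system, which is unique), or directly by a monotonicity argument on \eqref{eqn:M}. The main obstacle I anticipate is Stage 2 — proving differentiability of the MFG flow in the initial measure and the requisite Lipschitz bound on $D^mU$: this requires carefully setting up the linearized system on the simplex (handling the $P_0(\Sigma)$-valued directions and the identity \eqref{eqn:identity}), proving its well-posedness uniformly in $(t_0,m_0)$ for all $T$ via monotonicity, and controlling the second-order increments; the coercivity \eqref{bound} on $D^2_{pp}H$ is what makes this work, and it is where assumptions \textbf{(RegH)} genuinely bite.
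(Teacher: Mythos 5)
Your proposal follows essentially the same route as the paper: well-posedness and stability estimates for the MFG system \eqref{eqn:MFG} via the Lasry--Lions monotonicity argument and the coercivity \eqref{bound}, differentiability of the flow in $m_0$ through the linearized forward--backward system (yielding $D^mU\cdot\mu_0$ and, by a second pass through the same estimates, the Lipschitz continuity of $D^mU$), and finally verification of \eqref{eqn:M} by differentiating $U(t_0,x,m_0)=u(t_0,x)$ along the characteristics, with uniqueness reduced to uniqueness for \eqref{eqn:MFG}. This matches the paper's proof in structure and in all key ingredients, so no further comparison is needed.
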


The proof  exploits the renowned method of characteristics, which consists in proving that 
\be
U(t_0, x, m_0) := u(t_0,x)
\label{defU}
\ee 
solves \eqref{eqn:M}, 
$u$ being the solution of the Mean Field Game system \eqref{eqn:MFG} with initial time $t_0$ and initial distribution $m_0$.
In order to perform the computations, we have to prove the regularity in $m$ of the function $U(t_0,x,m)$ defined above.
In particular, we have to show that $D^m U$ exists and  is bounded. For this, we follow the strategy shown in \cite{card} - which is developed in infinite dimension - adapting it to our discrete setting.
The idea consists in studying the well-posedness and regularity properties of the linearized version of the system \eqref{eqn:MFG}, whose solution will end up coinciding with $D^m U \cdot \mu_0$, for all possible directions $\mu_0 \in P_0(\Sigma)$. In the remaining part of this section, $C$ will denote any constant which does not depend on $t_0$, $m_0$, and is allowed to change from line to line.

\subsection{Estimates on the Mean Field Game system}

We start by proving the well-posedness of the system \eqref{eqn:MFG}
$$
\begin{cases}
-\frac{d}{dt} u(t,x) + H(x, \Delta^x u(t,x)) =  F(x, m(t)), \\
\frac{d}{dt} m_x(t) = \sum_y m_y(t) \alpha^{*}_x(y, \Delta^y u(t,y)),\\
u(T,x) = G(x, m(T)), \\
m_x(t_0) = m_{x,0},
\end{cases}
$$
 and a useful a priori estimate on its solution $(u,m)$.
The existence of solutions follows from a standard fixed point argument: see Proposition 4 of \cite{gomes}. Let us remark that any flow of measures $m$ lies in the space
$$\left\{ m\in C^0\left([t_0,T],P(\Sigma) \right): |m(t) -m(s)|\leq 2 \nu(\Xi)\sqrt{d} |t-s|\right\},$$
which is a compact and convex subset of the space of continuous functions, endowed with the uniform norm (Lemma 4 of \cite{alekos}).
On the other hand the uniqueness of solution, under our assumptions, is a consequence of the following a priori estimates.
Before stating the proposition, we recall the notation $||u|| := \sup_{t \in [t_0,T]} \max_{x \in \Sigma} |u(t,x)|$.

\begin{prop}
Assume \textbf{(Mon)}, \textbf{(RegFG)} and \textbf{(RegH)}. Let $(u_1,m_1)$ and $(u_2,m_2)$ be two solutions to
\eqref{eqn:MFG} with initial conditions $m_1(t_0) =m_0^1$ and $m_2(t_0) =m_0^2$. Then
\begin{align}
|| u_1 - u_2 || &\leq C |m_0^1 - m_0^2|,
\label{stimau}\\
|| m_1 - m_2|| &\leq C |m_0^1 - m_0^2|.
\label{stimam}
\end{align} 
\label{unimfg}
\end{prop}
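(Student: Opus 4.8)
The plan is to prove the two estimates \eqref{stimau}--\eqref{stimam} by a Gronwall/fixed-point argument combined with the Lasry--Lions monotonicity trick. Write $u := u_1 - u_2$ and $m := m_1 - m_2$. The key quantity to estimate is the "duality pairing"
$$
\frac{d}{dt}\sum_{x\in\Sigma} u(t,x)\, m_x(t),
$$
which I would compute by using the equations for $u_1, u_2$ (the HJB part) and for $m_1, m_2$ (the KFP part) in \eqref{eqn:MFG}. Upon expanding, the terms involving $H(x,\Delta^x u_i)$ pair against $\Gamma$-type terms and, after collecting, one obtains on the right-hand side a contribution of the form $\sum_x (F(x,m_1(t)) - F(x,m_2(t)))(m_{1,x}(t) - m_{2,x}(t))$, which is $\geq 0$ by \textbf{(Mon)}, together with cross terms that are controlled by the convexity of $H$ (using \eqref{bound} from \textbf{(RegH)}, i.e. $D^2_{pp}H \geq C^{-1}$) and the Lipschitz bounds on $D_pH$. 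Integrating from $t_0$ to $T$ and using the terminal condition $u(T,x) = G(x,m_1(T)) - G(x,m_2(T))$, whose pairing against $m(T)$ is also $\geq 0$ by monotonicity of $G$, this yields a bound of the shape
$$
\int_{t_0}^T \sum_{x} \big| \Delta^x u_1 - \Delta^x u_2 \big|^2 \,(\text{weight})\, dt \;\leq\; C\,|m_0^1 - m_0^2|\cdot\|m_1 - m_2\|\; + \;(\text{lower order}),
$$
i.e. an $L^2$-in-time control on the difference of the gradients, in terms of $|m_0^1-m_0^2|$ and $\|m\|$.

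Second, I would estimate $\|m_1-m_2\|$ directly from the KFP equation. Since $\frac{d}{dt} m_{i,x}(t) = \sum_y m_{i,y}(t)\,\alpha^*_x(y,\Delta^y u_i(t,y))$ and $\alpha^* = -D_pH$ is Lipschitz in $p$ (by \textbf{(RegH)}, or already \textbf{(H1)}), subtracting the two equations and using $|\Delta^y u_i|$ bounded gives
$$
\Big|\frac{d}{dt} m_x(t)\Big| \;\leq\; C\,|m(t)| + C\sum_y |\Delta^y u_1(t,y) - \Delta^y u_2(t,y)|,
$$
so that $|m(t)| \leq |m_0^1-m_0^2| + C\int_{t_0}^t |m(s)|\,ds + C\int_{t_0}^T |\Delta u_1 - \Delta u_2|\,ds$. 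By Gronwall and Cauchy--Schwarz (to pass from the $L^2$-in-time bound on $\Delta u_1 - \Delta u_2$ to an $L^1$ bound), one gets $\|m_1 - m_2\| \leq C|m_0^1-m_0^2| + C\big(\int_{t_0}^T|\Delta u_1-\Delta u_2|^2\big)^{1/2}$. Feeding this back into the pairing estimate above (an inequality of the form $X^2 \leq C\,|m_0^1-m_0^2|(|m_0^1-m_0^2| + X)$, where $X^2$ denotes the $L^2$-gradient norm) and using Young's inequality $ab \leq \epsilon a^2 + b^2/(4\epsilon)$ to absorb the $X$ on the left, one concludes $X \leq C|m_0^1-m_0^2|$, hence \eqref{stimam} follows, and also the $L^2$-in-time bound $\int_{t_0}^T|\Delta^x u_1 - \Delta^x u_2|^2\,dt \leq C|m_0^1-m_0^2|^2$.

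Third, for \eqref{stimau} I would go back to the HJB equation for $u = u_1 - u_2$: it solves $-\frac{d}{dt}u(t,x) + [H(x,\Delta^x u_1) - H(x,\Delta^x u_2)] = F(x,m_1(t)) - F(x,m_2(t))$ with $u(T,x) = G(x,m_1(T))-G(x,m_2(T))$. Since $H$ is Lipschitz in $p$ on the relevant compact set and $F,G$ are Lipschitz in $m$ (with $D^mF, D^mG$ bounded, \textbf{(RegFG)}), integrating backward in time from $T$ gives
$$
|u(t,x)| \;\leq\; C\,|m_1(T)-m_2(T)| \;+\; C\int_t^T\!\Big(\,|\Delta^x u_1-\Delta^x u_2| + |m_1(s)-m_2(s)|\,\Big)\,ds,
$$
and then \eqref{stimam} together with the already-established $L^2$ (hence $L^1$) bound on $|\Delta^x u_1-\Delta^x u_2|$ yields $\|u_1-u_2\| \leq C|m_0^1-m_0^2|$. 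Uniqueness of the solution to \eqref{eqn:MFG} is the special case $m_0^1 = m_0^2$.

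The main obstacle, as always with this kind of estimate, is the bookkeeping in the duality computation: one must carefully track the sign of the $H$-related cross terms and verify that the "bad" terms (those not controlled by monotonicity) genuinely carry a factor that can be absorbed either by the coercivity $D^2_{pp}H \geq C^{-1}$ or by Young's inequality against the $L^2$-gradient norm on the left-hand side. In particular the term $\sum_x m_{1,x}(t)\big(H(x,\Delta^x u_2) - H(x,\Delta^x u_1) - D_pH(x,\Delta^x u_1)\cdot(\Delta^x u_2 - \Delta^x u_1)\big)$ and its symmetric counterpart must be shown, via Taylor expansion and \eqref{bound}, to dominate $C^{-1}\sum_x m_{i,x}|\Delta^x u_1 - \Delta^x u_2|^2$ from below (up to sign), which is exactly the quantity $X^2$ one wants to produce on the left; this is where \textbf{(RegH)} is essential and where the proof would otherwise break down if $H$ were merely Lipschitz.
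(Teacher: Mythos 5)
Your overall strategy coincides with the paper's: Step 1 is the Lasry--Lions duality computation of $\frac{d}{dt}\sum_x u(t,x)m_x(t)$ for $u=u_1-u_2$, $m=m_1-m_2$, combined with \textbf{(Mon)} and the uniform convexity \eqref{bound} to produce an $L^2$-in-time bound on $\Delta^x u_1-\Delta^x u_2$; Step 2 is a Gronwall estimate on the Kolmogorov--Fokker--Planck difference; Step 3 is a Gronwall estimate on the Hamilton--Jacobi--Bellman difference; and the three are combined via Young's inequality. That architecture is exactly the paper's.

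There is, however, one step that does not go through as written, and it occurs twice. The monotonicity step controls only the \emph{weighted} quantity
\begin{equation*}
\int_{t_0}^T\sum_x \left|\Delta^x u_1(t,x)-\Delta^x u_2(t,x)\right|^2\,\bigl(m_{1,x}(t)+m_{2,x}(t)\bigr)\,dt,
\end{equation*}
and the weight can be arbitrarily small: components of $m_0^i$ may vanish, and the lower bound $\kappa$ on the rates gives no uniform positive lower bound on $m_{i,x}(t)$ as $t\downarrow t_0$. In your KFP estimate you bound $\bigl|\frac{d}{dt}m_x\bigr|$ by $C|m|+C\sum_y|\Delta^y u_1-\Delta^y u_2|$ \emph{without} the weight, and then close via Cauchy--Schwarz in time against the weighted $L^2$ bound; but the unweighted $L^1$ norm of the gradient difference is not controlled by the weighted $L^2$ norm, so this chain breaks. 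The correct decomposition is $m_{1,y}\alpha^*(y,\Delta^y u_1)-m_{2,y}\alpha^*(y,\Delta^y u_2)=(m_{1,y}-m_{2,y})\alpha^*(y,\Delta^y u_2)+m_{1,y}\bigl(\alpha^*(y,\Delta^y u_1)-\alpha^*(y,\Delta^y u_2)\bigr)$, which keeps the factor $m_{1,y}$ in front of the Lipschitz term; one then applies Cauchy--Schwarz with respect to the probability measure $m_1$, namely $\sum_y m_{1,y}|\Delta^y u|\le\bigl(\sum_y m_{1,y}|\Delta^y u|^2\bigr)^{1/2}$, and the weighted bound from Step 1 is exactly what is needed (this is how the paper obtains \eqref{mcu}). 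The same issue reappears in your Step 3, where you invoke an $L^1$ bound on $|\Delta^x u_1-\Delta^x u_2|$ that you only possess in weighted form; the paper instead closes the HJB estimate with the elementary bound $\max_x|\Delta^x u(t,x)|\le C\max_x|u(t,x)|$ and Gronwall, yielding $||u||\le C||m||$ with no reference to the integrated gradient bound. You do correctly identify $\sum_x m_{i,x}|\Delta^x u_1-\Delta^x u_2|^2$ as the coercive quantity in your closing discussion, but the intermediate steps drop that weight; with these two repairs your argument is the paper's.
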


\begin{proof}
Without loss of generality, let us set $t_0=0$. Let $u:=u_1 - u_2$ and $m:=m_1 - m_2$. The proof is carried out in three steps.

\emph{Step 1. Use of Monotonicity.}
The couple $(u,m)$ solves
\begin{equation}
\begin{cases}
-\frac{d}{dt} u(t,x) + H(x, \Delta^x u_1(t,x)) - H(x, \Delta^x u_2(t,x))=  F(x, m_1(t)) -F(x, m_2(t)) \\
\frac{d}{dt} m(t,x) = \sum_y \left[m_1(t,y) \alpha^{*}_x(y, \Delta^y u_1(t,y)) - m_2(t,y) \alpha^{*}_x(y, \Delta^y u_2(t,y))\right]\\
u(T,x) = G(x, m_1(T)) - G(x, m_2(T))\\
m(0,x) = m_0^1 - m_0^2.
\end{cases}
\label{um}
\end{equation}
Since $\frac{d}{dt} \sum_x m(x)u(x) = \sum_x m(x) \frac{du}{dt}(x) + \sum_x \frac{dm}{dt}(x) u(x)$,  integrating over $[0,T]$ we have
\begin{align*}
&\sum_x \left[m(T,x)u(T,x) - m(0,x)u(0,x) \right]\\
&= \int_0^T \sum_x \left[H(x, \Delta^x u_1) - H(x, \Delta^x u_2) 
 -F(x, m_1) +F(x, m_2)\right] (m_1(x) -m_2(x)) dt\\
&+\int_0^T \sum_x \sum_y \left[m_1(y) \alpha^{*}_x(y, \Delta^y u_1) - m_2(y) \alpha^{*}_x(y, \Delta^y u_2)\right]
 (u_1(x) - u_2(x)) dt.
\end{align*}
Using the fact that $\sum_x \alpha^*_x(y)=0$ and the initial-final data, we can rewrite
\begin{align*}
&\sum_x [G(x, m_1) -G(x, m_2)] (m_1(x) -m_2(x)) + \int_0^T \sum_x\left[F(x, m_1) -F(x, m_2)\right] (m_1(x) -m_2(x))dt \\
&= \sum_x (m_0^1(x) - m_0^2(x))(u_1(0,x)-u_2(0,x))\\
& \qquad +  \int_0^T \sum_x \left\{[H(x, \Delta^x u_1) - H(x, \Delta^x u_2)] (m_1(x) -m_2(x))\right.\\
&\qquad \left.+ \Delta^x u \cdot \left[
m_1(x) \alpha^{*}(x, \Delta^x u_1) - m_2(x) \alpha^{*}(x, \Delta^x u_2)\right]\right\}dt.
\end{align*}
We now apply the monotonicity of $F$ and $G$ in the first line and the uniform convexity of $H$ in the last two lines. In fact, recalling that 
$\alpha^*_y(x, p) = - \frac{\partial}{\partial p_y} H(x,p)$, by \textbf{(RegH)} we have that, for each $x$,
\begin{align*}
H(x, \Delta^x u_1) - H(x, \Delta^x u_2) - \Delta^x u \cdot \frac{\partial}{\partial p} H(x, \Delta^x u_1) &\leq 
-C^{-1} |\Delta^x u |^2
\\
H(x, \Delta^x u_2) - H(x, \Delta^x u_1) + \Delta^x u \cdot \frac{\partial}{\partial p} H(x, \Delta^x u_2) &\leq 
-C^{-1} |\Delta^x u |^2.
\end{align*}
Hence we obtain
\begin{equation}
\int_0^T \sum_x |\Delta^x u(x) |^2 (m_1(x) +m_2(x))dt \leq C (m_0^1 - m_0^2)\cdot(u_1(0)-u_2(0))
\label{monot}
\end{equation}

\emph{Step 2. Estimate on Kolmogorov-Fokker-Planck equation}.
Integrating the second  equation in \eqref{um} over $[0,t]$, we get
$$
m(t,x) = m(0,x) + \int_0^t \sum_y \left[m_1(s,y) \alpha^{*}_x(y, \Delta^y u_1(s,y)) - m_2(s,y) \alpha^{*}_x(y, \Delta^y u_2(s,y))\right]ds.
$$
The boundedness and Lipschitz continuity of the rates  give
$$
\max_x |m(t,x)| \leq C |m_0^1 - m_0^2| +C\int_0^t  \max_x |m(s,x)| ds+ C\int_0^t \sum_x|\Delta^x u(s,x) | m_1(s,x) ds
$$
and hence, by Gronwall's Lemma,
\begin{equation}
||m|| \leq C |m_0^1 - m_0^2| + C \int_0^T \sqrt{\sum_x|\Delta^x u(t,x) |^2 m_1(x)}dt.
\end{equation}
This, together with inequality \eqref{monot}, yields
\begin{equation}
||m|| \leq C (|m_0^1 - m_0^2| + |m_0^1 - m_0^2|^{1/2} ||u||^{1/2}).
\label{mcu}
\end{equation}

\emph{Step 3. Estimate on Hamilton-Jacobi-Bellman equation}.
Integrating the first  equation in \eqref{um} over $[t,T]$, we get
$$u(t,x) = G(x,m_1(T))-G(x,m_2(T)) +\int_t^T \left[F(x,m_1)-F(x,m_2) + H(x, \Delta^x u_2) - H(x, \Delta^x u_1)
\right]ds.$$
Using the Lipschitz continuity of $F,G,H$ and the bound $\max_x |\Delta^x u (x)| \leq C\max_x |u(x)|$ we obtain
$$\max_x |u(t,x)| \leq C |m_1(T) - m_2(T)| +C \int_t^T |m_1(s) - m_2(s)|ds + C \int_t^T \max_x |u(s,x)|ds$$
and then Gronwall's Lemma gives
\begin{equation}
||u||\leq C||m||.
\label{ucm}
\end{equation}

This bound \eqref{ucm} and estimate \eqref{mcu} yield claim \eqref{stimam}, using the convexity inequality $AB \leq \varepsilon A^2 + \frac{1}{4\varepsilon}B^2$ for $A, B > 0$.
Again \eqref{ucm} finally proves claim \eqref{stimau}.
\end{proof}

\subsection{Linearized MFG system}

For proving Theorem \ref{Master}, we introduce the linearized version of system \eqref{eqn:MFG} around its solutions and then prove that it provides the derivative of $u(t_0,x)$ with respect to the initial condition $m_0$.  

As a preliminary step, we study a related linear system of ODE's, which will come useful several times.
\begin{equation}
\label{eqn:z}
\begin{cases}
- \frac{d}{dt} z(t,x) - \alpha^*(x, \Delta^x u) \cdot \Delta^x z(t,x) = D^m F(x, m(t),1) \cdot \rho(t) + b(t,x)\\
\frac{d}{dt} \rho(t,x) = \sum_y \rho_y \alpha^*_x(y,\Delta^y u) + \sum_y m_y(t) D_p \alpha^*_x(y, \Delta^x u) \cdot \Delta^y z + c(t,x)\\
z(T,x) = D^m G(x,m(T),1) \cdot \rho(T) + z_T(x)\\
\rho(t_0, \cdot) = \rho_0,
\end{cases}
\end{equation}
The unknowns are $z$ and $\rho$, while $b,c,z_T,\rho_0$ are given measurable functions,  with $c(t)\in P_0(\Sigma)$, and $(u,m)$ is the solution to \eqref{eqn:MFG}.
We state an immediate but useful estimate regarding the first of the two equations in \eqref{eqn:z}. 
\begin{lem}
\label{prop}
If  \textbf{(RegFG)} holds then the equation
\begin{equation}
\label{eqn:lin_hjb}
\begin{cases}
- \frac{d}{dt} z(t,x) - \alpha^*(x, \Delta^x u) \cdot \Delta^x z(t,x) = D^m F(x, m(t),1) \cdot \rho(t) + b(t,x)\\
z(T,x) =  D^m G(x,m(T),1) \cdot \rho(T) + z_T(x) 
\end{cases}
\end{equation}
has a unique solution for each final condition $z_T(x)$ and satisfies 
\begin{equation}
\label{eqn:estimate}
||z|| \leq C \left[ \max_x |z_T(x)| + ||\rho|| + ||b||\right].
\end{equation}
\end{lem}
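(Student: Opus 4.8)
<br>

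The plan is to treat \eqref{eqn:lin_hjb} as a linear backward ODE in $z$ with the terms involving $\rho$ and $b$ playing the role of an inhomogeneity, and then integrate forward from the final time to derive the bound \eqref{eqn:estimate} via Gronwall's Lemma. First I would rewrite the equation in integral form: integrating $-\frac{d}{dt}z(t,x) = \alpha^*(x,\Delta^x u(t,x))\cdot \Delta^x z(t,x) + D^m F(x,m(t),1)\cdot\rho(t) + b(t,x)$ over $[t,T]$ gives
\begin{equation*}
z(t,x) = D^m G(x,m(T),1)\cdot\rho(T) + z_T(x) + \int_t^T \Big[\alpha^*(x,\Delta^x u(s,x))\cdot \Delta^x z(s,x) + D^m F(x,m(s),1)\cdot\rho(s) + b(s,x)\Big]\,ds.
\end{equation*}

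Next I would take absolute values and pass to the $\max$ over $x\in\Sigma$. The key point is that $\alpha^*$ is bounded (it takes values in the compact set $A=[\kappa,M]^d$) and, by \eqref{bounddelta}, $|\Delta^x z(s,x)| \leq C \max_x |z(s,x)|$; likewise $D^m F$ and $D^m G$ are bounded by \textbf{(RegFG)}. Hence
\begin{equation*}
\max_x |z(t,x)| \leq C\,||\rho|| + \max_x|z_T(x)| + C\int_t^T \max_x |z(s,x)|\,ds + (T-t)\,||b||,
\end{equation*}
where $C$ collects the uniform bounds on $\|\alpha^*\|$, $\|D^m F\|$ and $\|D^m G\|$ (and absorbs the constant from \eqref{bounddelta}). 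Applying Gronwall's Lemma on $[t,T]$ to the function $t\mapsto \max_x|z(t,x)|$ yields $\max_x|z(t,x)| \leq C\big(\max_x|z_T(x)| + \|\rho\| + \|b\|\big) e^{C(T-t)}$, and taking the supremum over $t\in[t_0,T]$ gives \eqref{eqn:estimate} with a constant $C$ depending only on $T$ and the structural bounds, hence independent of $t_0$ and $m_0$ as claimed.

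For existence and uniqueness of the solution: the map $z \mapsto$ (right-hand side of the integral equation above) is a contraction on $C^0([t_0,T]\times\Sigma)$ for $T-t_0$ small, by the same Lipschitz bound on $\Delta^x z$, and the global statement follows by iterating on successive subintervals; alternatively one simply notes that \eqref{eqn:lin_hjb} is a linear ODE system in the finitely many unknowns $(z(t,1),\dots,z(t,d))$ with continuous (indeed bounded) coefficients, so Carathéodory's existence and uniqueness theorem applies directly on all of $[t_0,T]$. I would phrase it via the linear ODE argument since it is cleanest. I do not anticipate a genuine obstacle here; the only mild subtlety is making sure the constant in \eqref{eqn:estimate} is tracked so as to be uniform in $t_0$ and $m_0$, which is automatic because the bounds on $\alpha^*$, $D^m F$, $D^m G$ and the finite-difference constant in \eqref{bounddelta} do not depend on the initial data — this is exactly why the a priori estimate is stated before the more delicate fixed-point construction for the full linearized system \eqref{eqn:z}.
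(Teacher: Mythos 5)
Your proof is correct and follows essentially the same route as the paper: write the backward equation in integral form over $[t,T]$, bound the $\alpha^*$, $D^m F$ and $D^m G$ terms using their uniform bounds together with \eqref{bounddelta}, and conclude by Gronwall's Lemma, with well-posedness coming from linear ODE theory. The only cosmetic difference is that the paper expands $\alpha^*(x,\Delta^x u)\cdot\Delta^x z=\sum_y\alpha^*_y(x,\Delta^x u)z_y(t)$ using $\sum_y\alpha^*_y=0$ rather than invoking \eqref{bounddelta}, which yields the same estimate.
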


\begin{proof}
The well-posedness of the equation is immediate from classical ODE's theory.
Integrating over the time interval $[t,T]$ and using that 
\begin{equation*}
\alpha^*(x, \Delta^x u) \cdot \Delta^x z(t,x) = \sum_y \alpha^*_y(x, \Delta^x u) z_y(t),
\end{equation*}
we find
\begin{align*}
z(t,x) - z(T,x) - \int_t^T \sum_y \alpha^*_y(x, \Delta^x u) z_y(s)ds = \int_t^T D^m F \cdot \rho(s)ds + \int_t^T b(s,x) ds.
\end{align*}
Substituting the expression for $z(T,x)$, 
and using the bound on the control and on the derivatives of $F$ and $G$ we can estimate
\begin{align*}
\max_x |z(t,x)| & \leq \max_x |z_T(x)| +  C\max_x |\rho(T,x)|\\
& + C \int_t^T \max_x |z(s,x)|ds + C \int_t^T \max_x |\rho(s,x)|ds + \int_t^T \max_x |b(s,x)| ds
\end{align*}
and thus, applying Gronwall's Lemma and taking the supremum on $t$, we  get \eqref{eqn:estimate}.
\end{proof}

In the next result we prove the well-posedness of system \eqref{eqn:z} together with useful a priori estimates on its solution. 
\begin{prop}
Assume \textbf{(RegH)}, \textbf{(Mon)} and \textbf{(RegFG)}. Then for any (measurable) $b,c,z_T$, the linear system \eqref{eqn:z} has a unique solution $(z,\rho) \in C^1([0,T]; \mathbb{R}^d \times P_0(\Sigma))$. Moreover it satisfies
\begin{align}
\label{eqn:apriori}
||z|| & \leq C(|z_T| + ||b|| + ||c|| + |\rho_0|)  \\
\label{eqn:apriori2}
||\rho|| & \leq C(|z_T| + ||b|| + ||c|| + |\rho_0|).
\end{align}
\label{proppriori}
\end{prop}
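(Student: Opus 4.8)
The plan is to treat system \eqref{eqn:z} as a coupled linear forward-backward ODE system and establish well-posedness together with the a priori bounds in one stroke, via a fixed-point argument that mimics the structure of the nonlinear estimates in Proposition \ref{unimfg}. The key point is that system \eqref{eqn:z} is precisely the linearization of \eqref{eqn:MFG} along the solution $(u,m)$, so it inherits a monotonicity structure from \textbf{(Mon)} and \textbf{(RegFG)} and a uniform-convexity structure from \textbf{(RegH)}. First I would set up the map $\Phi$ sending a candidate forward flow $\rho$ to the solution $z$ of the backward linear HJB equation \eqref{eqn:lin_hjb} (well-posed and estimated by Lemma \ref{prop}), and then to the solution $\tilde\rho$ of the forward linear KFP equation (second line of \eqref{eqn:z}) driven by that $z$; the sought solution is a fixed point of $\Phi$. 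Existence and uniqueness on a short time interval follow from the contraction property of $\Phi$, which holds because the coupling terms $D^m F\cdot\rho$, $D^m G\cdot\rho$ and $\sum_y m_y D_p\alpha^*_x(y,\Delta^y u)\cdot\Delta^y z$ are all Lipschitz with constants controlled by the bounds in \textbf{(RegH)} and \textbf{(RegFG)}; to pass from short time to $[0,T]$ one uses the a priori estimates below, which are time-horizon independent (up to the constant $C$).

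The core of the argument is the a priori estimate, obtained exactly as in the proof of Proposition \ref{unimfg}. I would compute $\frac{d}{dt}\sum_x \rho_x(t) z(t,x)$ and integrate over $[t_0,T]$, using the two equations in \eqref{eqn:z}; the boundary terms produce $\sum_x \rho_x(T)z(T,x) - \sum_x (\rho_0)_x z(t_0,x)$, and substituting the final condition $z(T,x)=D^m G(x,m(T),1)\cdot\rho(T)+z_T(x)$ brings out the quadratic form $\sum_x \rho_x(T)[D^m G(x,m(T),1)\cdot\rho(T)]$, which is $\geq 0$ by \textbf{(Mon)}/\textbf{(RegFG)}. The interior terms combine, after using $\sum_x \alpha^*_x(y)=0$, into
\begin{equation*}
\int_{t_0}^T\!\!\Big(\sum_x \rho_x [D^m F(x,m,1)\cdot\rho] + \sum_x m_x \,\Delta^x z\cdot D_{pp}^2 H(x,\Delta^x u)\,\Delta^x z\Big)dt
\end{equation*}
plus lower-order terms linear in $b,c,z_T$. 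The first summand is $\geq 0$ by monotonicity and the second is $\geq C^{-1}\sum_x m_x|\Delta^x z|^2$ by \eqref{bound} (note $\alpha^*=-D_pH$, so the linearized drift coefficient is $-D_{pp}^2H$). This yields the key bound
\begin{equation*}
\int_{t_0}^T \sum_x m_x |\Delta^x z|^2\,dt \leq C\big(|z_T|+\|b\|+\|c\|+|\rho_0|\big)\big(\|z\|+\|\rho\|\big).
\end{equation*}
Then, exactly as in Steps 2 and 3 of Proposition \ref{unimfg}: integrating the KFP line and applying Gronwall gives $\|\rho\|\leq C(|\rho_0|+\|c\|) + C\int_{t_0}^T\sqrt{\sum_x m_x|\Delta^x z|^2}\,dt$, hence $\|\rho\|\leq C(|\rho_0|+\|c\|) + C(|z_T|+\|b\|+\|c\|+|\rho_0|)^{1/2}(\|z\|+\|\rho\|)^{1/2}$; and Lemma \ref{prop} gives $\|z\|\leq C(|z_T|+\|\rho\|+\|b\|)$. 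Substituting the latter into the former and using the convexity inequality $AB\leq \varepsilon A^2+\frac1{4\varepsilon}B^2$ to absorb the $\|\rho\|^{1/2}$ term on the left produces \eqref{eqn:apriori2}, and then \eqref{eqn:apriori} follows from $\|z\|\leq C(|z_T|+\|\rho\|+\|b\|)$.

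The main obstacle I anticipate is bookkeeping rather than conceptual: making sure that when the identity for $\frac{d}{dt}\sum_x\rho_x z(t,x)$ is expanded, the cross terms genuinely assemble into the monotone quadratic form in $\rho$ and the coercive quadratic form in $\Delta^x z$, with all remaining terms being honestly lower order (linear in the data times $\|z\|+\|\rho\|$, hence absorbable). One must also be slightly careful that $\rho(t)\in P_0(\Sigma)$ is preserved by the forward equation — this holds because $\sum_x\big(\sum_y\rho_y\alpha^*_x(y,\Delta^y u)+\sum_y m_y D_p\alpha^*_x(y,\Delta^x u)\cdot\Delta^y z + c_x\big)=0$, using $\sum_x\alpha^*_x(y)=0$, $\sum_x D_p\alpha^*_x(y,\cdot)=0$ (differentiate $\sum_x\alpha^*_x=0$ in $p$), and $c(t)\in P_0(\Sigma)$ — so that the monotonicity inequalities, which require the test direction to lie in $P_0(\Sigma)$, are legitimately applicable. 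Everything else is a routine Gronwall-and-convexity argument identical in spirit to Proposition \ref{unimfg}.
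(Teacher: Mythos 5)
Your a priori estimate is exactly the paper's argument: the duality computation $\frac{d}{dt}\sum_x\rho_x(t)z(t,x)$, the cancellation of the transport cross-terms via $\sum_x\alpha^*_x(y)=0$, positivity of the $D^mF$ and $D^mG$ quadratic forms from \textbf{(Mon)}/\textbf{(RegFG)}, coercivity of $-D_p\alpha^*=D^2_{pp}H\geq C^{-1}$ giving $\int\sum_x m_x|\Delta^x z|^2\,dt$ on the left, then Gronwall plus Cauchy--Schwarz (with the $\sqrt{m_x}\cdot\sqrt{m_x}$ split) on the Kolmogorov line, Lemma \ref{prop} for $\|z\|$, and the convexity inequality to close. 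Your observation that $P_0(\Sigma)$ is preserved by the forward equation is correct and is implicitly used in the paper.

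The one genuine gap is in your existence mechanism. For a coupled \emph{forward-backward} system, ``contraction on a short time interval, then extend to $[0,T]$ using the a priori estimates'' does not go through as stated: you cannot patch solutions across a subdivision of $[0,T]$, because at an interior splitting time you know neither the terminal datum for $z$ on the left subinterval nor the initial datum for $\rho$ on the right one, and the map $\Phi$ on the full interval is a contraction only when $T-t_0$ (or the size of the coupling coefficients) is small. The continuation has to be performed in a homotopy parameter multiplying the coupling, not in time. This is precisely what the paper does: it applies the Leray--Schauder fixed point theorem to $\Phi$ on $C^0([0,T];P_0(\Sigma))$ (noting that Schauder alone fails for lack of compactness of $P_0(\Sigma)$), checking that $\Phi$ is continuous and compact and that the set $\{\rho:\rho=\lambda\Phi(\rho),\ \lambda\in[0,1]\}$ is bounded. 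The boundedness of that set is established by running your duality computation on the $\lambda$-perturbed system --- so the estimate you derived is exactly the missing ingredient, it just has to be applied to $\rho=\lambda\Phi(\rho)$ rather than invoked as a time-continuation device. Uniqueness then follows from linearity together with \eqref{eqn:apriori}--\eqref{eqn:apriori2}, as you say.
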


\begin{proof}
Without loss of generality we assume $t_0=0$.
We use a fixed-point argument to prove the existence of a solution to \eqref{eqn:z}. Uniqueness will be then implied by  estimates \eqref{eqn:apriori} and \eqref{eqn:apriori2}, thanks to the linearity of the system.

We define the map $\Phi : C^0\left([0,T] ; P_0(\Sigma)\right) \to  C^0\left([0,T] ; P_0(\Sigma)\right)$ as follows: for a fixed $\rho \in C^0\left([0,T] ; P_0(\Sigma)\right)$ we consider the solution $z = z(\rho)$ to equation \eqref{eqn:lin_hjb}, and define $\Phi(\rho)$ to be the solution of the second equation in \eqref{eqn:z} with $z = z(\rho)$.
In order to prove the existence of a fixed point of $\Phi$, which is clearly a solution to \eqref{eqn:z}, we apply Leray-Schauder Fixed Point Theorem.
We remark the fact that more standard fixed point theorems are not applicable to this situation since we cannot assume that $\rho$ belongs to a compact subspace of $C^0\left([0,T] ; P_0(\Sigma)\right)$, since $P_0(\Sigma)$ is not compact.  
First of all, we note that $C^0\left([0,T] ; P_0(\Sigma)\right)$ is convex and that the map $\Phi$ is trivially continuous, because of the linearity of the system.
Moreover, using the equation for $\rho$ in system \eqref{eqn:z}, it is easy to see that $\Phi$ is a compact map, i.e. it sends bounded sets of $C^0\left([0,T] ; P_0(\Sigma)\right)$ into bounded sets of $C^1\left([0,T] ; P_0(\Sigma)\right)$.
Thus, to apply Leray-Schauder Theorem it remains to prove that the set $\left\{\rho \ : \ \rho = \lambda \Phi(\rho) \text{ for some } \lambda \in [0,1]\right\}$ is bounded in $C^0\left([0,T] ; P_0(\Sigma)\right)$.

Let us fix a $\rho$ such that $\rho = \lambda \Phi(\rho)$. Then the couple $(z, \rho)$ solves
\begin{equation*}
\begin{cases}
- \frac{d}{dt} z(t,x) - \alpha^*(x, \Delta^x u) \cdot \Delta^x z(t,x) = \lambda\left(D^m F(x, m(t),1) \cdot \rho(t) + b(t,x)\right)\\
\frac{d}{dt} \rho(t,x) = \sum_y \rho_y \alpha^*_x(y,\Delta^y u) + \lambda\left(\sum_y m_y(t) D_p \alpha^*_x(y, \Delta^x u) \cdot \Delta^y z + c(t,x)\right)\\
z(T,x) = \lambda\left(D^m G(x,m(T),1) \cdot \rho(T) + z_T(x)\right)\\
\rho(t_0, \cdot) = \lambda \rho_0.
\end{cases}
\end{equation*}
First, we note that we can restrict to $\lambda > 0$, since otherwise $\rho = 0$. Therefore, we can use the equations (for brevity we omit the dependence of $\alpha^*$ on the second variable) to get 
\begin{align*}
\frac{d}{dt} \sum_x z(t,x) \rho_x(t)  =& -\lambda \sum_x \rho(t,x)[D^m F(x,m(t), 1) \cdot \rho(t) + b(t,x)] \\ 
&-\sum_{x,y} \rho_x(t) \alpha^*_y(x) [z(t,y)-z(t,x)] + \sum_{x,y} \rho_y(t) \alpha^*_x(y) z(t,x)\\
&+ \lambda\sum_{x,y}m_y z(t,x) D_p \alpha_x^*(y)  \cdot \Delta^y z  + \lambda \sum_x c(t,x) z(t,x).
\end{align*}
The second line is 0, using the fact that $\sum_x \rho_x(t) =0$ and changing $x$ and $y$ in the second double sum.
Integrating over $[0,T]$ and using the expression for $z(T,x)$ we obtain
\begin{align*}
\lambda  \sum_x \rho_x(T)&\left[D^m G(x,m(T), 1) \cdot \rho(T) + z_T(x)   \right] - \lambda z(0) \cdot \rho_0\\
= &- \lambda \int_0^T  \sum_x \rho_x(t)[D^m F(x,m(t), 1) \cdot \rho(t) + b(t,x)]dt \\
& + \lambda \int_0^T \sum_{x,y} m_y D_p \alpha_x^*(y)  \cdot \Delta^y z(z(t,x) - z(t,y))dt \\
& + \lambda \int_0^T \sum_x c(t,x) z(t,x)dt - \lambda \int_0^T \rho(t,x) D^m G(x,m(T), 1)\cdot \rho(T)dt, 
\end{align*}
where in the second term of the sum we have also used that $\sum_{x,y} [m_y D_p\alpha^*_x(y) \cdot \Delta^y z] z(t,y) = 0$.

Dividing by $\lambda >0$ and bringing the terms with $F$ and $G$ on the left hand side, together with the term in $m$ and $D_p \alpha^*$, we can rewrite 
\begin{align*}
-& \int_0^T \sum_{x,y}  m_y \Delta^y z D_p\alpha^*_x(y) \cdot \Delta^y z dt + \int_0^T  \sum_x \rho(t,x)[D^m F(x,m(t), 1) \cdot \rho(t)]dt \\
 +&\sum_x \rho(T,x) D^m G(x,m(T), 1)\cdot \rho(T) \\
& \qquad = - \sum_x z_T(x) \rho(T,x) + \sum_x z(0,x) \rho_0(x) 
 - \int_0^T \sum_x \rho(t,x)b(t,x) dt +  \int_0^T \sum_x c(t,x) z(t,x)dt.
\end{align*}
We observe that, by \textbf{(Mon)} and \textbf{(RegFG)}, we have 
\begin{align}
 \sum_x \rho(t,x)[D^m F(x,m(t), 1) \cdot \rho(t)]&\geq 0, \\
 \sum_x \rho(T,x) [D^m G(x,m(T), 1)\cdot \rho(T)]&\geq 0.
\end{align}
Furthermore assumption \eqref{bound} yields 
\begin{equation*}
-\int_0^T \sum_{x,y}  m_y \Delta^y z D_p\alpha^*_x(y) \cdot \Delta^y z dt \geq C^{-1}\int_0^T \sum_x m_x |\Delta^x z|^2dt,
\end{equation*} 
so that we can estimate the previous equality by
\begin{align}
C^{-1}\int_0^T & \sum_x m_x |\Delta^x z|^2 dt \leq  |z_T \cdot \rho(T)| + |z(0) \cdot \rho_0| +\int_0^T \left|c(t) \cdot z(t) \right|dt
 + \int_0^T \left|\rho(t) \cdot b(t)\right|dt \nonumber\\
& \leq  |z_T| |\rho(T)| + |z(0)| |\rho_0|+ \int_0^T |c(t)| |z(t)|dt + \int_0^T |\rho(t)| |b(t)| dt
\label{sti}
\end{align}

On the other hand, by the equation for $\rho$ we have
\begin{align*}
\rho(t,x) = \rho_0(x) +\int_0^t \sum_y \rho(s,y) \alpha_x^*(y)ds + \int_0^t \left[\sum_y m_y D_p \alpha^*_x(y) \cdot \Delta^y z + c(x)\right]ds,
\end{align*}
and thus
\begin{align*}
|\rho(t,x)| & \leq  |\rho_0(x)| + M\int_0^t \sum_y |\rho_y|ds + C\int_0^t \left[\sum_y m_y |\Delta^y z| + |c(x)|\right]ds,
\end{align*}
so that, by Gronwall's Lemma and taking the sum for $x\in\Sigma$ and the sup over $t \in [0,T]$, 
\begin{align*}
||\rho|| & \leq C|\rho_0| + C \int_0^T \sum_x \sqrt{m_x} \sqrt{m_x} |\Delta^x z|dt + C ||c||\\
& \leq C|\rho_0| + C \int_0^T \sqrt{\sum_x\left(\sqrt{m_x}\right)^2} \sqrt{\sum_x m_x |\Delta^x z|^2}dt + C||c|| \\
& = C|\rho_0| +C \int_0^T \sqrt{\sum_x m_x |\Delta^x z|^2}dt + C||c|| \\
& \leq C|\rho_0| +C \sqrt{\int_0^T \sum_x m_x |\Delta^x z|^2dt} + C||c||.
\end{align*}

Now, we use  estimate \eqref{sti} on $\int_0^T \sum_x m_x |\Delta^x z|^2$ that we found above to get
\begin{align*}
||\rho|| & \leq C ||c|| +C|\rho_0| + C\left( |\rho_0||z(0)| + |z_T| |\rho(T)| + \int_0^T |c(t)| |z(t)| + \int_0^T |\rho(t)| |b(t)| \right)^{\frac{1}{2}}\\
& \leq C ||c|| +C|\rho_0| + C\left(|z(0)|^{1/2} |\rho_0|^{1/2} + |z_T|^{1/2} |\rho(T)|^{1/2}| + ||c||^{1/2} ||z||^{1/2}  + ||\rho||^{1/2} ||b||^{1/2} \right).
\end{align*}
We  further estimate the right hand side using  bound \eqref{eqn:estimate}:
\begin{align*}
||\rho||  &\leq C (||c||+ |\rho_0|) \\
&+ C\left[ |z_T|^{1/2} |\rho(T)|^{1/2} + (||c||^{1/2}+|\rho_0|^{1/2})(|z_T|^{1/2} + ||\rho||^{1/2} + ||b||^{1/2}) 
+ ||\rho||^{1/2} ||b||^{1/2} \right].
\end{align*}
Using the inequality $AB \leq \varepsilon A^2 + \frac{1}{4\varepsilon}B^2$ for $A, B > 0$, we obtain
\begin{equation*}
||\rho|| \leq C(||c|| + |z_T|+ ||b|| +|\rho_0|) +\frac12 ||\rho||,
\end{equation*} 
which implies \eqref{eqn:apriori2}.  Then \eqref{eqn:apriori} follows  from \eqref{eqn:estimate}.
\end{proof}

Given the solution $(u,m)$ to system \eqref{eqn:MFG}, with initial condition $m_0$ for $m$ and final condition $G$ for $u$, we introduce the linearized system:
\begin{equation}
\tag{LIN}
\label{eqn:LIN}
\begin{cases}
-\frac{d}{dt} v(t,x) - \alpha^*(x, \Delta^x u(t,x)) \cdot \Delta^x v(t,x) = D^mF(x, m(t), 1) \cdot \mu(t)\\
\frac{d}{dt} \mu_x(t) = \sum_y \mu_y(t) \alpha^*_x(y, \Delta^y u(t,y)) + \sum_y m_y D_p \alpha^*_x(y,\Delta^y u) \cdot \Delta^y v(t,x) \\
v(T,x) = D^m G(x,m(T),1) \cdot \mu(T)\\
\mu(t_0) = \mu_0 \in P_0(\Sigma).
\end{cases}
\end{equation}
Note that in the right hand side of the first equation $D^m F(x,m(t),1) \cdot \mu(t) = D^m F(x, m(t), y) \cdot \mu(t)$ for every $y \in \Sigma$, using identity \eqref{eqn:identity} and the fact that $\mu(t) \in P_0(\Sigma)$ for every $t$ (i.e. identity \eqref{eqn:identity2}).
For this reason we just fixed the choice to $D^m F(x,m(t), 1)$ and $D^m G(x, m(T),1)$ in system \eqref{eqn:LIN}.

The existence and uniqueness of a solution $(v, \mu) \in C^1 ([0,T]; \mathbb{R}^d \times P_0(\Sigma))$ is ensured by Proposition
\ref{proppriori}.
The aim is to show that the solution $(v, \mu)$ to system \eqref{eqn:LIN} satisfies  
\begin{equation}
v(t_0,x) = D^m U(t_0, x, m_0, 1) \cdot \mu_0.
\label{deri}
\end{equation}
This proves that the solution $U$ defined via \eqref{defU} is differentiable with respect to $m_0$ in any direction $\mu_0$, with derivative given by \eqref{deri}, and also that $D^m U$ is continuous in $m$.
 Equality \eqref{deri} is implied by the following 

\begin{thm}
\label{reg}
Assume \textbf{(RegH)}, \textbf{(Mon)} and \textbf{(RegFG)}.
Let $(u,m)$ and $(\hat{u}, \hat{m})$ be the solutions to \eqref{eqn:MFG} respectively starting from $(t_0, m_0)$ and $(t_0, \hat{m}_0)$. Let $(v, \mu)$ be the solution to \eqref{eqn:LIN} starting from $(t_0, \mu_0)$, with $\mu_0 := \hat{m}_0 - m_0$. Then
\begin{equation}
||\hat{u} - u - v|| + ||\hat{m}- m- \mu||  \leq C |m_0 - \hat{m}_0|^2.
\label{bound2}
\end{equation}
\end{thm}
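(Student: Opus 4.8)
The plan is to show that the error pair
\[
w := \hat u - u - v, \qquad \pi := \hat m - m - \mu
\]
solves the linear system \eqref{eqn:z}, linearised around the same $(u,m)$, with forcing data $b,c,z_T$ all of size $C|m_0-\hat m_0|^2$ and with vanishing initial datum $\rho_0 = \pi(t_0) = \hat m_0 - m_0 - \mu_0 = 0$. Once this is established, the a priori estimates \eqref{eqn:apriori}--\eqref{eqn:apriori2} of Proposition \ref{proppriori} give $\|w\| + \|\pi\| \le C(|z_T| + \|b\| + \|c\| + |\rho_0|) \le C|m_0-\hat m_0|^2$, which is precisely \eqref{bound2}. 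The recurring inputs are: the first order bounds $\|\hat u - u\| \le C|m_0-\hat m_0|$ and $\|\hat m - m\| \le C|m_0-\hat m_0|$ from Proposition \ref{unimfg}; the elementary bound $|\Delta^x \hat u - \Delta^x u| \le C\|\hat u - u\|$ from \eqref{bounddelta}; the fact that $\|u\|,\|\hat u\|$ are bounded uniformly in $(t_0,m_0)$ (standard for the MFG Hamilton--Jacobi equation), so that all the $p$-arguments $\Delta^x u,\Delta^x\hat u$ lie in a fixed compact set on which the Lipschitz and $C^2$ bounds of \textbf{(RegH)} apply; and the identity $\alpha^* = -D_p H$.

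First I would derive the equation for $w$. Subtracting the HJB equations for $\hat u$ and $u$ and the first equation of \eqref{eqn:LIN}, and Taylor expanding $H(x,\cdot)$ at $\Delta^x u$ — using $\alpha^* = -D_p H$ for the first order term and the boundedness of $D^2_{pp}H$ (from \textbf{(RegH)}) for the integral remainder — the nonlinear term becomes $-\alpha^*(x,\Delta^x u)\cdot \Delta^x w$ plus a remainder of size $C|\Delta^x\hat u - \Delta^x u|^2 \le C|m_0-\hat m_0|^2$. On the right-hand side, expanding $F(x,\cdot)$ along the segment from $m(t)$ to $\hat m(t)$ and using the Lipschitz continuity of $D^m F$ (from \textbf{(RegFG)}) together with \eqref{eqn:identity2} (both $\hat m - m$ and $\mu$ lie in $P_0(\Sigma)$), one gets $D^m F(x,m(t),1)\cdot\pi(t)$ plus a remainder of size $C|\hat m(t) - m(t)|^2 \le C|m_0-\hat m_0|^2$. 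Thus $w$ solves the first equation of \eqref{eqn:z} with $b(t,x)$ the sum of these two remainders, $\|b\| \le C|m_0-\hat m_0|^2$; the analogous expansion of $G$ at the terminal time gives $w(T,x) = D^m G(x,m(T),1)\cdot\pi(T) + z_T(x)$ with $|z_T| \le C|\hat m(T)-m(T)|^2 \le C|m_0-\hat m_0|^2$.

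Next I would derive the equation for $\pi$. Subtracting the KFP equations for $\hat m$ and $m$ and the second equation of \eqref{eqn:LIN}, the key step is the bilinear expansion of $\hat m_y\,\alpha^*_x(y,\Delta^y\hat u)$ about $m_y\,\alpha^*_x(y,\Delta^y u)$: writing $\hat m_y = m_y + (\hat m_y - m_y)$ and Taylor expanding $\alpha^*_x(y,\cdot) = -\partial_{p_x}H(y,\cdot)$ at $\Delta^y u$ (the remainder controlled using that $D_p\alpha^* = -D^2_{pp}H$ is Lipschitz, again \textbf{(RegH)}), the leading terms are $(\hat m_y - m_y)\alpha^*_x(y,\Delta^y u) + m_y\,D_p\alpha^*_x(y,\Delta^y u)\cdot(\Delta^y\hat u - \Delta^y u)$, while the errors are the second order Taylor remainder and the genuinely quadratic cross term $(\hat m_y - m_y)\,D_p\alpha^*_x(y,\Delta^y u)\cdot(\Delta^y\hat u - \Delta^y u)$ — a product of two first order quantities — all of size $C|m_0-\hat m_0|^2$. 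Subtracting off $\mu_y\alpha^*_x(y,\Delta^y u)$ and $m_y D_p\alpha^*_x(y,\Delta^y u)\cdot\Delta^y v$ turns the leading terms into $\pi_y\alpha^*_x(y,\Delta^y u) + m_y D_p\alpha^*_x(y,\Delta^y u)\cdot\Delta^y w$, so $\pi$ solves the second equation of \eqref{eqn:z} with $c(t,x)$ equal to the residual error, $\|c\| \le C|m_0-\hat m_0|^2$. One must also check that $c(t) \in P_0(\Sigma)$, so that Proposition \ref{proppriori} genuinely applies: this holds because $\sum_x \pi_x(t) \equiv 0$ (as $\hat m,m$ are probability measures and $\mu(t)\in P_0(\Sigma)$), while $\sum_x \alpha^*_x(y,p) = 0$ and hence $\sum_x D_p\alpha^*_x(y,p) = 0$, so summing the $\pi$-equation over $x$ forces $\sum_x c(t,x) = 0$; moreover $\pi(t_0) = \hat m_0 - m_0 - \mu_0 = 0$.

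Having identified $(w,\pi)$ as the solution of \eqref{eqn:z} with data bounded by $C|m_0-\hat m_0|^2$ and $\rho_0 = 0$, an application of Proposition \ref{proppriori} concludes. The only substantive work — and the main place where care is needed — is the uniform bookkeeping of the remainder terms: one must combine Proposition \ref{unimfg} with \eqref{bounddelta} to bound $|\Delta^x\hat u - \Delta^x u|$ and $|\hat m - m|$ linearly in $|m_0-\hat m_0|$, and must make sure that every Lipschitz/$C^2$ bound on $H, D_pH, D^2_{pp}H, D^m F, D^m G$ is invoked on the fixed compact set (so the constants $C$ are genuinely independent of $t_0$ and $m_0$); the one non-computational subtlety is the verification that $c(t)\in P_0(\Sigma)$.
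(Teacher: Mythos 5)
Your proposal is correct and follows essentially the same route as the paper: identify $(\hat u-u-v,\hat m-m-\mu)$ as the solution of the auxiliary linear system \eqref{eqn:z} with $\rho_0=0$ and quadratically small data $b,c,z_T$ (Taylor remainders for $H$, $F$, $G$ and the bilinear cross term in the Kolmogorov equation), then conclude via Proposition \ref{proppriori} combined with the first order bounds of Proposition \ref{unimfg}. Your explicit check that $c(t)\in P_0(\Sigma)$ is a welcome detail the paper leaves implicit.
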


\begin{proof}

Set $z:= \hat{u} - u -v$ and $\rho := \hat{m} - m - \mu$, they solve \eqref{eqn:z}
$$
\begin{cases}
- \frac{d}{dt} z(t,x) - \alpha^*(x, \Delta^x u) \cdot \Delta^x z(t,x) = D^m F(x, m(t),1) \cdot \rho(t) + b(t,x)\\
\frac{d}{dt} \rho(t,x) = \sum_y \rho_y \alpha^*_x(y,\Delta^y u) + \sum_y m_y(t) D_p \alpha^*_x(y, \Delta^x u) \cdot \Delta^y z + c(t,x)\\
z(T,x) = D^m G(x,m(T),1) \cdot \rho(T) + z_T(x)\\
\rho(t_0, \cdot) = 0,
\end{cases}
$$
with 
\begin{align*}
b(t,x) & := A(t,x) + B(t,x)\\
A(t,x)& := - \int_0^1 \left[D_p H(x, \Delta^x u + s(\Delta^x \hat{u} - \Delta^x u)) - D_p H(x, \Delta^x u)\right] \cdot (\Delta^x \hat{u} - \Delta^x u) ds \\
B(t,x) & := \int_0^1 \left[D^m F(x, m+ s(\hat{m} - m),1) - D^mF(x,m,1)\right] \cdot (\hat{m} - m) ds\\
c(t,x) & := \sum_y (\hat{m}_y - m_y) D_p \alpha^*_x(y, \Delta^y u) \cdot (\Delta^y \hat{u} - \Delta^y u) \\
& + \sum_y \hat{m}_y \int_0^1 \left[D_p \alpha^*_x(y, \Delta^x u + s(\Delta^x \hat{u} - \Delta^x u)) - D_p \alpha^*_x(y, \Delta^y u)\right] \cdot(\Delta^y \hat{u} - \Delta^y u) ds\\
z_T(x) & := \int_0^1 \left[D^m G(x,m(T) + s(\hat{m}(T) - m(T)),1) - D^mG(x,m(t),1)\right] \cdot (\hat{m}(T) - m(T))ds.
\end{align*}

Using the assumptions, namely the Lipschitz continuity of $D_p H, \ D^2_{pp}H, \ D^m F$ and $D^m G$, and the bound 
$\max_x |\Delta^x u| \leq C |u|$, we estimate
\begin{align*}
||b||&\leq ||A|| + ||B||\\
||A|| &\leq C || \hat{u} - u||^2\\
||B|| &\leq C || \hat{m} - m ||^2\\
|z_T|&\leq C |\hat{m}(T) - m(T)|^2\\
||c||&\leq C || \hat{m} - m ||\cdot || \hat{u} - u|| + C || \hat{u} - u||^2.
\end{align*}
Applying \eqref{eqn:apriori} and \eqref{eqn:apriori2} to the above system and then \eqref{stimau} and \eqref{stimam}, we obtain
\begin{align*}
|| z|| +||\rho|| &\leq C (|z_T| + ||b|| + ||c||)\\ 
&\leq C \left(|| \hat{u} - u||^2 + || \hat{m} - m ||^2 + || \hat{m} - m ||\cdot || \hat{u} - u||\right) \\
&\leq C |m_0 - \hat{m}_0|^2. 
\end{align*}
\end{proof}

\subsection{Proof of Theorem \ref{Master}}
We are finally in the position to prove the main theorem of this section.

\subsubsection{Existence}
Let $U$ be the function defined by \eqref{defU}, i.e. $U(t_0, x,m_0) := u(t_0,m_0)$. We have shown in the above Theorem \ref{reg}
that $U$ is $C^1$ in $m$, while the fact that it is $C^1$ in $t$ is clear.  We compute the limit, as $h$ tends to 0, of
\begin{align}
\label{eqn:formal}
& \frac{U(t_0+h, x, m_0) - U(t_0, x,m_0)}{h} \nonumber \\ 
& = \frac{U(t_0+h, x,m_0) - U(t_0+h, x,m(t_0+h))}{h} + \frac{U(t_0+h,x,m(t_0+h)) - U(t_0,x,m_0)}{h}.
\end{align}
For the first term,  we have, for any $y\in\Sigma$,
\begin{align*}
U(t_0 + h, x , m(t_0 +h)) & - U(t_0 + h, x,m(t_0)) \\
& = [m_s := m(t_0) + s(m(t_0+h) - m(t_0))]\\
& = \int_0^1  \frac{\partial}{\partial({m(t_0 + h) - m(t_0)})}U(t_0 + h, x ,m_s, y) ds\\ 
& = \int_0^1 D^m U(t_0+h,x,m_s, y) \cdot (m(t_0 + h) - m(t_0))ds\\
& = \int_0^1 ds \int_{t_0}^{t_0+h}  D^m U(t_0+h,x,m_s, y) \cdot \left(\sum_{k=1}^d m_k(t) \alpha^*(k, \Delta^k u(t))\right)dt\\
& = \int_0^1 ds \int_{t_0}^{t_0+h}  \sum_{z=1}^d \sum_{k=1}^d m_k(t) \left[D^m U(t_0+h,x,m_s, y)\right]_z \alpha^{*}_z(k, \Delta^k u(t))dt.
\end{align*}
Using identity \eqref{eqn:identity}, we obtain
\begin{align*}
U(t_0 + h, x , m(t_0 +h)) & - U(t_0 + h, x,m(t_0)) \\
& = \int_0^1 ds \int_{t_0}^{t_0+h}  \sum_{z=1}^d \sum_{k=1}^d m_k(t) \left[D^m U(t_0+h,x,m_s, k)\right]_z \alpha^{*}_z(k, \Delta^k u(t))dt \\
& + \int_0^1 ds \int_{t_0}^{t_0+h}  \sum_{z=1}^d \sum_{k=1}^d m_k(t) \left[D^m U(t_0+h,x,m_s, y)\right]_k \alpha^{*}_z(k, \Delta^k u(t))dt\\
& = \int_0^1 ds \int_{t_0}^{t_0+h}  \sum_{z=1}^d \sum_{k=1}^d m_k(t) \left[D^m U(t_0+h,x,m_s, k)\right]_z \alpha^{*}_z(k, \Delta^k u(t))dt,
\end{align*}
where the last equality follows from 
\begin{align*}
\sum_{z=1}^d & \sum_{k=1}^d m_k(t) \left[D^m U(t_0+h,x,m_s, y)\right]_k \alpha^{*}_z(k, \Delta^k u(t))\\
& = \sum_{k=1}^d m_k(t) \left[D^m U(t_0+h,x,m_s, y)\right]_k \sum_{z=1}^d \alpha^{*}_z(k, \Delta^k u(t)) = 0,
\end{align*}
since $\sum_{z=1}^d \alpha^{*}_z = 0$, as $\alpha^{*}_k(k) = - \sum_{z \neq k} \alpha^{*}_z(k)$.

Summarizing, we have found that,
\begin{align*}
U(t_0+h, x, m(t_0+h)) & - U(t_0+h, x,m(t_0)) \\
& = \int_0^1 ds \int_{t_0}^{t_0+h} \!\!\! dt \int_{\Sigma} D^m U(t_0+h,x,m_s, y) \cdot \alpha^*(y, \Delta^y u(t)) m(t)(dy) .
\end{align*}
Dividing by $h$ and letting $h \to 0$, we get
\begin{align*}
\lim_{h \to 0}& \frac{U(t_0+h, x, m(t_0+h))  - U(t_0+h, x,m(t_0))}{h} \\
& = \int_{\Sigma} D^m U(t_0,x,m_0, y) \cdot \alpha^*(y, \Delta^y u(t_0)) dm_0(y) \\
& =  \int_{\Sigma} D^m U(t_0,x,m_0, y) \cdot \alpha^*(y, \Delta^x U(t_0,y,m_0)) dm_0(y),
\end{align*}
using the continuity of $D^m U$ in time and dominate convergence to take the limit inside the integral in $ds$.

The second term in \eqref{eqn:formal}, for $h > 0$, is instead
\begin{equation*}
U(t_0+h, x,m(t_0+h)) - U(t_0,x,m_0) = u_x(t_0+h) - u_x(t_0) = h \frac{d}{dt}u_x(t_0) + o(h),
\end{equation*}
and thus
\begin{equation*}
\lim_{h \to 0^+}\frac{U(t_0+h, x,m(t_0+h)) - U(t_0,x,m_0)}{h} = \frac{d}{dt}u_x(t_0).
\end{equation*}
Finally, we can rewrite \eqref{eqn:formal}, after taking the limit $h \to 0$, to obtain
\begin{align*}
\partial_t U(t_0, x,m_0) & = -\int_{\Sigma} D^m U(t_0,x,m_0, y) \cdot \alpha^*(y, \Delta^x U(t_0,y,m_0)) dm_0(y) \\
& + \frac{d}{dt}u_x(t_0) = [\text{using the equation for $u$}]\\
& =  -\int_{\Sigma} D^m U(t_0,x,m_0, y) \cdot \alpha^*(y, \Delta^x U(t_0,y,m_0)) dm_0(y) \\
& + H(x,\Delta^xU(t_0,x,m_0)) - F(x,m_0),
\end{align*}
and thus
\begin{equation*}
-\partial_t U(t_0,x,m_0) + H(x, \Delta^x U(t_0,x,m_0)) - \int_{\Sigma} D^mU(t_0,x,m_0,y) \cdot \alpha^*(y, \Delta^y U) dm_0(y) = F(x,m_0),
\end{equation*}
which is exactly \eqref{eqn:M} computed in $(t_0,m_0)$.

\subsubsection{Uniqueness}
Let us consider another solution $V$ of \eqref{eqn:M}. Since $||D^m V|| \leq C$, we know that $V$ is Lipschitz with respect to $m$, and so is $\Delta^x V$.
From this remark and the Lipschitz continuity of $\alpha^*$ with respect to $p$
, it follows that the equation 
\begin{equation*}
\begin{cases}
\frac{d}{dt} \tilde{m}(t) = \sum_y \tilde{m}_y(t) \alpha^{*}(y, \Delta^y V(t,y,\tilde{m}(t)))\\
\tilde{m}(t_0) = m_0 
\end{cases}
\end{equation*} 
admits a unique solution in $[t_0,T]$. 

If we now set $\tilde{u}(t,x) := V(t,x,\tilde{m}(t))$, we can compute (using for e.g. $D^m V(\cdot,\cdot,\cdot,1)$) 
\begin{align*}
\frac{d}{dt} \tilde{u}(t,x) & = \partial_t V(t,x,\tilde{m}(t)) + D^m V(t,x,\tilde{m}(t), 1) \cdot \frac{d}{dt} \tilde{m}(t) \\
& = [\text{using the equation for $\tilde{m}$}] \\
& = \partial_t V(t,x,\tilde{m}(t)) + D^m V(t,x,\tilde{m}(t), 1) \cdot \left(\sum_y \tilde{m}_y(t) \alpha^{*}(y, \Delta^y V(t,y,\tilde{m}(t))) \right)\\
& = [\text{using identity \eqref{eqn:identity} on $D^m V(\cdot,\cdot,\cdot,1)$]}\\
& = \partial_t V(t,x,\tilde{m}(t)) + \int_{\Sigma}D^m V(t,x,\tilde{m}(t), y) \cdot \alpha^{*}(y, \Delta^y V(t,y,\tilde{m}(t))) 
 \tilde{m}(t)(dy)\\
& = [\text{using the equation for $V$}]\\
& = H(x,\Delta^x V(t,x,\tilde{m}(t))) - F(x,\tilde{m}) = H(x, \Delta^x \tilde{u}(t,x)) - F(x, \tilde{m}(t)), 
\end{align*}
and thus the pair $(\tilde{u}(t), \tilde{m}(t))$ satisfies
\begin{equation*}
\begin{cases}
-\frac{d}{dt} \tilde{u}(t,x) + H(x, \Delta^x \tilde{u}(t,x)) =  F(x, \tilde{m}(t)),\\
\frac{d}{dt} \tilde{m}_x(t) = \sum_j \tilde{m}_y(t)\alpha^{*}_x(y,\Delta^y \tilde{u}(t,y)),\\
\tilde{u}(T,x) = V(T,x,\tilde{m}(T)) = G(x, \tilde{m}(T)),\\
\tilde{m}(t_0) = m_0.
\end{cases}
\end{equation*}
Namely, $(\tilde{u}, \tilde{m})$ solves the system \eqref{eqn:MFG}, whose solution is unique thanks to Proposition \ref{unimfg}, so that we can conclude $V(t_0, x, m_0) = U(t_0, x, m_0)$ for each $(t_0,x,m_0)$, and thus the uniqueness of solutions to \eqref{eqn:M} follows.

\subsubsection{Regularity}

It remains to prove that the unique classical solution defined via \eqref{defU} is regular, in the sense of Definition \ref{def}, i.e. that
$D^m U$ is Lipschitz continuous with respect to $m$, uniformly in $t,x$.

So let $(u_1,m_1)$ and $(u_2,m_2)$ be two solution to \eqref{eqn:MFG} with initial conditions $m_1(t_0)=m_0^1$ and $m_2(t_0)=m_0^2$, respectively.
Let also $(v_1, \mu_1)$ and $(v_2,\mu_2)$ be the associated solutions to \eqref{eqn:LIN} with $\mu_1(t_0) = \mu_2(t_0) =\mu_0$.
Recall from equation \eqref{deri} that $v_1(t_0,x) = D^m U(t_0, x, m_0^1, 1) \cdot \mu_0$ and 
$v_2(t_0,x) = D^m U(t_0, x, m_0^2, 1) \cdot \mu_0$, thus we have to estimate the norm $||v_1 -v_2||$.

Set $z:=v_1-v_2$ and $\rho:= \mu_1-\mu_2$. They solve the linear system \eqref{eqn:z} with $\rho_0=0$ and 
\begin{align*}
b(t,x) & := \left[D^m F(x,m_1,1)- D^m F(x,m_2,1) \right] \cdot\mu_2 
+\left[\alpha^*(x, \Delta^x u_1) - \alpha^*(x, \Delta^x u_2)\right]\cdot  \Delta^x v_2
\\
c(t,x) & := \sum_y \mu_{2,y} \left[\alpha^*_x(y, \Delta^y u_1) - \alpha^*_x(y, \Delta^y u_2)\right]
 \\
& \quad + \sum_y \left[m_{1,y} D_p \alpha^*_x(y, \Delta^y u_1) - m_{2,y} D_p \alpha^*_x(y, \Delta^y u_2) \right] \cdot \Delta^x v_2
\\
z_T(x) & := \left[D^m G(x,m_1(T),1) - D^m G(x,m_2(t),1)\right] \cdot \mu_2.
\end{align*}
Using the Lipschitz continuity of $D_pH$, $D^2_{pp}H$, $D^m F$ and $D^m G$, 
applying the bounds \eqref{eqn:apriori} to $v_2$ and \eqref{eqn:apriori2} 
to $\mu_2$ and also \eqref{stimau} and \eqref{stimam}, we estimate
\begin{align*}
||b|| &\leq C ||m_1-m_2||\cdot ||\mu_2|| + C ||u_1-u_2||\cdot ||v_2|| \leq C|m_0^1-m_0^2| \cdot|\mu_0|\\
||c|| &\leq C ||u_1-u_2|| \cdot||\mu_2|| + C ||m_1-m_2||\cdot ||v_2|| + C ||u_1-u_2||\cdot ||v_2|| \leq C|m_0^1-m_0^2| \cdot|\mu_0|\\
|z_T| &\leq C ||m_1-m_2||\cdot ||\mu_2|| \leq C|m_0^1-m_0^2|\cdot |\mu_0|.
\end{align*}
Then \eqref{eqn:apriori} gives
$$||z||\leq C ( ||b|| + ||c|| + |z_T|) \leq C  |m_0^1-m_0^2|\cdot |\mu_0|,$$
which, since $z(t_0,x) = \left(D^m U(t_0, x, m_0^1, 1) -D^m U(t_0, x, m_0^2, 1)\right)\cdot \mu_0$, yields
\begin{align*}
\max_x |D^m U(t_0,& x, m_0^1, 1) -D^m U(t_0, x, m_0^2, 1)| \\
&\leq C \max_x \sup_{\mu_0 \in P_0(\Sigma)}
\frac{\left|\left(D^m U(t_0, x, m_0^1, 1) -D^m U(t_0, x, m_0^2, 1)\right)\cdot \mu_0\right|}{|\mu_0|} \\
&\leq C |m_0^1-m_0^2|. 
\end{align*}

\section{Conclusions}
\label{concl}
Let us summarize the results we have obtained. The two set of assumptions are given in Section 2.2 and verified in Example 2.1.
\begin{enumerate}
	\item If \textbf{(H1)} holds and there exists a regular solution $U$ to the Master Equation \eqref{eqn:M},
	in the sense of Definition \ref{def}, then the value functions of the $N$-player game converge to $U$ (Theorem \ref{fund}) and the optimal trajectories \eqref{eqn:13} satisfy a propagation of chaos property, i.e they converge to the limiting i.i.d solution to \eqref{limit} 
	(Theorem \ref{chaos});
	\item Under the assumptions required for convergence, the empirical measures processes \eqref{mn} associated with the optimal trajectories satisfy a Central limit Theorem (Theorem  \ref{clt}) and a Large Deviation Principle with rate function $I$ in \eqref{3.8} (Theorem \ref{large});
	\item Assuming \textbf{(RegH)}, \textbf{(Mon)} and \textbf{(RegFG)}, there exists a unique classical solution to \eqref{eqn:M} and it is also regular in the sense of Definition \ref{def}.
\end{enumerate}

\end{document}